\newif\ifdraft
\definecolor{labelkey}{gray}{0.5}
\newlength{\myarrowsize} 
\newenvironment{diagram*}[2]{%
\[%
\begin{tikzpicture}[>=cmto,baseline=(current bounding box.center),%
	to/.style={->,font=\scriptsize,cap=round},%
	into/.style={cmhook->,font=\scriptsize,cap=round},%
	onto/.style={-cmonto,font=\scriptsize,cap=round},%
	math/.style={matrix of math nodes, row sep=#2, column sep=#1,%
		text height=1.5ex, text depth=0.25ex}]%
}{%
\end{tikzpicture}%
\]%
\ignorespacesafterend%
}
\newcommand{\cohH}{\mathcal{H}}
\newcommand{\ZZ}{\mathbb{Z}}
\newcommand{\QQ}{\mathbb{Q}}
\def\overbar#1#2#3{{%
	\setbox0=\hbox{\displaystyle{#1}}%
	\dimen0=\wd0
	\advance\dimen0 by -#2 
	\vbox {\nointerlineskip \moveright #3 \vbox{\hrule height 0.3pt width \dimen0}%
		\nointerlineskip \vskip 1.5pt \box0}%
}}
\let\@@seccntformat\@seccntformat
\renewcommand*{\@seccntformat}[1]{%
  \expandafter\ifx\csname @seccntformat@#1\endcsname\relax
    \expandafter\@@seccntformat
  \else
    \expandafter
      \csname @seccntformat@#1\expandafter\endcsname
  \fi
    {#1}%
}
\newcommand*{\@seccntformat@subsection}[1]{%
  \textbf{\csname the#1\endcsname.}
}
\let\@paragraph\paragraph
\renewcommand*{\paragraph}[1]{%
	\vspace{0.3\baselineskip}%
	\@paragraph{\textit{#1}}%
}
\newtheorem{theorem}[equation]{Theorem}
\newtheorem*{theorem*}{Theorem}
\newtheorem{lemma}[equation]{Lemma}
\newtheorem*{lemma*}{Lemma}
\newtheorem{corollary}[equation]{Corollary}
\newtheorem{proposition}[equation]{Proposition}
\newtheorem*{proposition*}{Proposition}
\newtheorem{claim}[equation]{Claim}
\theoremstyle{definition}
\newtheorem{definition}[equation]{Definition}
\newtheorem*{definition*}{Definition}
\newtheorem{remark}[equation]{Remark}
\newtheorem{example}[equation]{Example}
\newtheorem*{example*}{Example}
\newtheorem*{problem*}{Problem}
\theoremstyle{plain}
\newcommand{\theoremref}[1]{\hyperref[#1]{Theorem~\ref*{#1}}}
\newcommand{\lemmaref}[1]{\hyperref[#1]{Lemma~\ref*{#1}}}
\newcommand{\definitionref}[1]{\hyperref[#1]{Definition~\ref*{#1}}}
\newcommand{\propositionref}[1]{\hyperref[#1]{Proposition~\ref*{#1}}}
\newcommand{\conjectureref}[1]{\hyperref[#1]{Conjecture~\ref*{#1}}}
\newcommand{\corollaryref}[1]{\hyperref[#1]{Corollary~\ref*{#1}}}
\newcommand{\exampleref}[1]{\hyperref[#1]{Example~\ref*{#1}}}
\newcommand{\setupref}[1]{\hyperref[#1]{Set-up~\ref*{#1}}}
\newcommand{\remarkref}[1]{\hyperref[#1]{Remark~\ref*{#1}}}
\newcommand{\claimref}[1]{\hyperref[#1]{Claim~\ref*{#1}}}
\newcommand{\figureref}[1]{\hyperref[#1]{Figure~\ref*{#1}}}
\let\old@caption\caption
\renewcommand*{\caption}[1]{%
	\setcounter{figure}{\value{equation}}%
	\stepcounter{equation}%
	\old@caption{#1}\relax%
}
\newcounter{intro}
\newtheorem{intro-conjecture}[intro]{Conjecture}
\newtheorem{intro-corollary}[intro]{Corollary}
\newtheorem{intro-theorem}[intro]{Theorem}
\newcommand{\parref}[1]{\hyperref[#1]{\S\ref*{#1}}}
\newcommand*\if@single[3]{%
  \setbox0\hbox{${\mathaccent"0362{#1}}^H$}%
  \setbox2\hbox{${\mathaccent"0362{\kern0pt#1}}^H$}%
  \ifdim\ht0=\ht2 #3\else #2\fi
  }
\newcommand*\rel@kern[1]{\kern#1\dimexpr\macc@kerna}
\newcommand*\widebar[1]{\@ifnextchar^{{\wide@bar{#1}{0}}}{\wide@bar{#1}{1}}}
\newcommand*\wide@bar[2]{\if@single{#1}{\wide@bar@{#1}{#2}{1}}{\wide@bar@{#1}{#2}{2}}}
\newcommand*\wide@bar@[3]{%
  \begingroup
  \def\mathaccent##1##2{%
    \if#32 \let\macc@nucleus\first@char \fi
    \setbox\z@\hbox{$\macc@style{\macc@nucleus}_{}$}%
    \setbox\tw@\hbox{$\macc@style{\macc@nucleus}{}_{}$}%
    \dimen@\wd\tw@
    \advance\dimen@-\wd\z@
    \divide\dimen@ 3
    \@tempdima\wd\tw@
    \advance\@tempdima-\scriptspace
    \divide\@tempdima 10
    \advance\dimen@-\@tempdima
    \ifdim\dimen@>\z@ \dimen@0pt\fi
    \rel@kern{0.6}\kern-\dimen@
    \if#31
      \overline{\rel@kern{-0.6}\kern\dimen@\macc@nucleus\rel@kern{0.4}\kern\dimen@}%
      \advance\dimen@0.4\dimexpr\macc@kerna
      \let\final@kern#2%
      \ifdim\dimen@<\z@ \let\final@kern1\fi
      \if\final@kern1 \kern-\dimen@\fi
    \else
      \overline{\rel@kern{-0.6}\kern\dimen@#1}%
    \fi
  }%
  \macc@depth\@ne
  \let\math@bgroup\@empty \let\math@egroup\macc@set@skewchar
  \mathsurround\z@ \frozen@everymath{\mathgroup\macc@group\relax}%
  \macc@set@skewchar\relax
  \let\mathaccentV\macc@nested@a
  \if#31
    \macc@nested@a\relax111{#1}%
  \else
    \def\gobble@till@marker##1\endmarker{}%
    \futurelet\first@char\gobble@till@marker#1\endmarker
    \ifcat\noexpand\first@char A\else
      \def\first@char{}%
    \fi
    \macc@nested@a\relax111{\first@char}%
  \fi
  \endgroup
}
\newtheorem{set-up}[equation]{Set-up}
\definecolor{gray(x11gray)}{rgb}{0.75, 0.75, 0.75}
\definecolor{aliceblue}{rgb}{0.94, 0.97, 1.0}
\begin{document}

\vspace{\baselineskip}

\title[On the local cohomology of secant varieties]{On the local cohomology of secant varieties}

\author[S.~Olano]{Sebasti\'an~Olano}
\address{Department of Mathematics, University of Toronto, 40 St. George St., Toronto, Ontario 
Canada, M5S 2E4}
\email{{\tt seolano@math.toronto.edu}}

\author[D.~Raychaudhury]{Debaditya Raychaudhury}
\address{Department of Mathematics, University of Arizona, 617 N. Santa Rita Ave., Tucson, Arizona 85721,
USA\linebreak
Present address: Department of Mathematics and Statistics, University of New Mexico, Albuquerque, NM 87131, USA}
\email{rcdeba@gmail.com}

\thanks{}

\subjclass[2020]{14J17, 14N07} 
\keywords{Secant varieties, Hodge filtration on local cohomology and local cohomological dimension}


\maketitle

\vspace{-20pt}

\begin{abstract}
Given a sufficiently positive embedding $X\subset\mathbb{P}^N$ of a smooth projective variety $X$, we consider its secant variety $\Sigma$ that comes equipped with the embedding $\Sigma\subset\mathbb{P}^N$ by its construction. In this article, we determine the local cohomological dimension $\textrm{lcd}(\mathbb{P}^N,\Sigma)$ of this embedding, as well as the generation level of the Hodge filtration on the topmost non-vanishing local cohomology module $\mathcal{H}^{q}_{\Sigma}(\mathcal{O}_{\mathbb{P}^N})$, i.e., when $q=\textrm{lcd}(\mathbb{P}^N,\Sigma)$. Additionally, we show that $\Sigma$ has quotient singularities (in which case the equality $\textrm{lcd}(\mathbb{P}^N,\Sigma)=\textrm{codim}_{\mathbb{P}^N}(\Sigma)$ is known to hold) if and only if $X\cong\mathbb{P}^1$. We also provide a complete classification of $(X,L)$ for which $\Sigma$ has ($\mathbb{Q}$-)Gorentein singularities. As a consequence, we deduce that if $\Sigma$ is a local complete intersection, then either $X$ is isomorphic to $\mathbb{P}^1$, or an elliptic curve.
\end{abstract}


\section{Introduction}

Let $X\subset\mathbb{P}^N:=\mathbb{P}(H^0(L))$ be a smooth projective variety of dimension $n$, embedded by the complete linear series of a very ample line bundle $L$. The {\it secant variety} $\Sigma:=\Sigma(X,L)$ of $X\subset\mathbb{P}^N$ is defined as the Zariski closure of the union of 2-secant lines to $X$ in $\mathbb{P}^N$, i.e.,
$$\Sigma:=\overline{\cup_{x_1,x_2\in X, x_1\neq x_2}\langle x_1,x_2\rangle}\subseteq\mathbb{P}^N.$$
Secant (and higher secant) varieties have a ubiquitous presence in classical algebraic geometry. The dimension of these varieties (or in other words, theirs {\it defectiveness}), their defining equations, and syzygies are topics of great interest that have attracted the attention of algebraic geometers for a long time, see \cite{CC, CR,ENP,Rai,SV,Ver,V2,V1, Zak} and the references therein. The research on these topics dates back more than a hundred years (see for e.g. \cite{sev}) and found important recent applications in several other areas such as tensor geometry, algebraic statistics, and complexity theory (\cite{Lan, LW, SS}).

\vspace{5pt}

It is well-known that if the embedding line bundle $L$ is sufficiently positive, then $\Sigma$ has the expected dimension $2n+1$. It is a natural question to ask: {\it how bad are the singularities of $\Sigma$ when $L$ is sufficiently positive?} In order to carry out a detailed study of this question, perhaps the first agenda that one might be interested in pursuing is to understand when these varieties are normal. The normality of secant varieties has been established by Ullery in \cite{Ull16}. Immediately after her work, Chou-Song in \cite{CS18} showed that under the positivity assumption of Ullery, $\Sigma$ has Du Bois singularities and completely characterized the cases when the singularities of $\Sigma$ are rational. More recently, the question of when these varieties have higher Du Bois or higher rational singularities has been addressed in \cite{ORS}.

\vspace{5pt}

Observe that a secant variety $\Sigma$ is endowed with an embedding $\Sigma\subseteq \mathbb{P}^N$ by its construction. It is then natural to study the local cohomology of this embedding, which gives us further information about the singularities of $\Sigma$. In order to state our results, we first introduce some notation.

\vspace{5pt}

Given an embedding $Z\subset W$ of a variety $Z$ inside a smooth variety $W$, the {\it local cohomological dimension} $\textrm{lcd}(W,Z)$ is an invariant naturally associated with this embedding. It is defined through the local cohomology sheaves $\mathcal{H}_Z^q(\mathcal{O}_W)$ as 
$$\textrm{lcd}(W,Z):=\textrm{max}\left\{q\mid \mathcal{H}_Z^q(\mathcal{O}_W)\neq 0\right\}.$$
It is well-known that if $Z$ is smooth, or more generally a local complete intersection (henceforth we will abbreviate this
as {\it lci}), we have  $\textrm{lcd}(W,Z)=\textrm{codim}_W(Z)$. This fact allows us to regard this invariant as a measure of the singularities of $Z$. 

\vspace{5pt}

Moreover, it turns out that the sheaves $\mathcal{H}_Z^q(\mathcal{O}_W)$ have the structure of a filtered regular, holonomic $\mathcal{D}_W$-module underlying a mixed Hodge module on $W$ with support in $Z$. In particular, they come equipped with the Hodge filtration $F_{\bullet}\mathcal{H}_Z^q(\mathcal{O}_W)$. For each index $q$, associated to this Hodge filtration $F_{\bullet}\mathcal{H}_Z^q(\mathcal{O}_W)$, there is an invariant known as the {\it generation level} $\textrm{gl}(F_{\bullet}\mathcal{H}_Z^q(\mathcal{O}_W))$ which essentially determines the least number of terms $F_k\mathcal{H}_Z^q(\mathcal{O}_W)$ required to completely determine the filtration up to the action of the differential operators. If $Z$ is smooth with $c:=\textrm{codim}_W(Z)=\textrm{lcd}(W,Z)$, then we have $\textrm{gl}(F_{\bullet}\mathcal{H}_Z^c(\mathcal{O}_W))=0$.

\vspace{5pt}

The purpose of this work is to completely determine the quantities $$\mathrm{lcd}(\mathbb{P}^N,\Sigma)\textrm{ and }\mathrm{gl}(F_{\bullet}\mathcal{H}^{\mathrm{lcd}(\mathbb{P}^N,\Sigma)}_{{\Sigma}}(\mathcal{O}_{\mathbb{P}^N}))$$ when $L$ is sufficiently positive. We describe the positivity of the line bundle $L$ in terms of an integer $p$, via a condition called the $(Q_p)$-property described in \definitionref{qp}. It was shown in \cite{ORS} that there are functions $f(p,n)$ and $g(l,p,n)$ such that the pluri-adjoint linear series $lK_X+dA+B$, where $A$ is a very ample and $B$ is a nef line bundle, satisfies $(Q_p)$-property if  $l\geq f(p,n)$ and $d\geq g(l,p,n)$. In particular, $lK_X+dA+B$ satisfies $(Q_p)$-property for all $p$ if $d\gg l\gg 0$. We require another important invariant in order to state the main result. Define $$\nu(X):=\max\left\{i\mid 0\leq i\leq n-1 \textrm{ and } H^j(\mathcal{O}_X)=0\,\textrm{ for all }\, 1\leq j\leq i\right\},$$ 
with the convention that $\nu(X)=0$ if $H^1(\mathcal{O}_X)\neq 0$ or if $n=1$, i.e. when the above set is empty. Set $q_X=N-n=\textrm{codim}_{\mathbb{P}^N}(X)$. Our main result is as follows:

\begin{intro-theorem}\label{mainlcd}
Assume $L$ satisfies $(Q_n)$-property (equivalently, assume $L$ satisfies $(Q_p)$-property for all $p$) and $\Sigma\neq\mathbb{P}^N$. Then $$\mathrm{lcd}(\mathbb{P}^N,\Sigma)=\begin{cases} 
      q_X-2 & \textrm{if }\, \nu(X)=0; \\
      q_X-3 & \textrm{ otherwise}.
   \end{cases}$$
Moreover, the following statements hold:
\begin{enumerate}
    \item If $\nu(X)=0$, then $$\mathrm{gl}(F_{\bullet}\mathcal{H}^{q_X-2}_{{\Sigma}}(\mathcal{O}_{\mathbb{P}^N}))=\begin{cases} 
      0 & \textrm{if }\, (X,L)\cong(\mathbb{P}^1,\mathcal{O}_{\mathbb{P}^1}(d))\textrm{ with } d\geq 4; \\
      1 & \textrm{ otherwise}.
   \end{cases}$$
    \item If $\nu(X)\geq 1$, then $$\mathrm{gl}(F_{\bullet}\mathcal{H}^{q_X-3}_{{\Sigma}}(\mathcal{O}_{\mathbb{P}^N}))=\begin{cases} 
      1 & \textrm{if }\, H^2(\mathcal{O}_X)=0; \\
      2 & \textrm{ otherwise}.
   \end{cases}$$
\end{enumerate}
\end{intro-theorem}

It is useful to compare the result above to the case when $Z \subset W$ is a reduced hypersurface, where the generation level has been related to an important invariant. More precisely, in this case, the {\it minimal exponent} $\widetilde{\alpha_Z}$, which by definition is the negative of the largest root of its reduced Bernstein-Sato polynomial, provides an upper bound on $\textrm{gl}(F_{\bullet}\mathcal{H}_Z^1(\mathcal{O}_W))$. This exponent also detects the maximum $p$ for which the singularities of $Z$ are $p$-Du Bois or $p$-rational. To highlight our contribution, let us list what we can conclude for the secant variety of a quartic rational normal curve, which is a hypersurface, solely through its minimal exponent:

\begin{example}\label{ex1}
Let $\mathbb{P}^1\hookrightarrow\mathbb{P}^4$ be a quartic rational normal curve, and let $Z_0, Z_1, Z_2, Z_3, Z_4$ be the coordinates of $\mathbb{P}^4$. It is well-known that its secant variety is a cubic, which is the determinantal variety associated to the Catalecticant (or Hankel) matrix
$$\begin{pmatrix}
    Z_0 & Z_1 & Z_2\\
    Z_1 & Z_2 & Z_3\\
    Z_2 & Z_3 & Z_4
\end{pmatrix}.$$
The Bernstein-Sato polynomial is given by 
\begin{equation}\label{rbs}
    b_{\Sigma}(s)=(s+1)\left(s+\frac{3}{2}\right).
\end{equation}
In particular, $\widetilde{\alpha_{\Sigma}}=1.5$. 
Thus $\textrm{gl}(F_{\bullet}\mathcal{H}_{\Sigma}^1(\mathcal{O}_{\mathbb{P}^N}))\leq 1$ by \cite[Theorem A]{MP'}. Combining \cite{MPOW} and \cite{jksy22}, it also follows that the singularities of $\Sigma$ are rational but not $1$-Du Bois. Of course, this is in accordance with the results of \cite{ORS}, see \theoremref{previous}.
\end{example}

In view of \theoremref{mainlcd}, we see that the upper bound on $\textrm{gl}(F_{\bullet}\mathcal{H}_{\Sigma}^1(\mathcal{O}_{\mathbb{P}^N}))$ derived from the minimal exponent in the above example is not optimal. It is also important to note that calculating the local cohomological dimension and the generation levels is generally more challenging when the variety is not a locally complete intersection, as is the case for secant varieties, which are typically not even Cohen-Macaulay (see \theoremref{previous} and \corollaryref{corf}).

\vspace{5pt}

We remark an additional consequence of \theoremref{mainlcd}. For an embedded variety $Z\subset W$ inside smooth $W$, it is known that the ideal depth-lcd pattern $\textrm{depth}(\mathcal{O}_Z)\geq k\implies \textrm{lcd}(W,Z)\leq \dim W-k$ does not hold in general if $k\geq 4$, see \cite[Example 2.11]{DT}. It was shown by Chou-Song that $\mathrm{depth}(\mathcal{O}_{\Sigma})=n+2+\nu(X)$ under the assumptions of \theoremref{mainlcd} (c.f. \theoremref{depth} for a more precise statement). In view of this, it is interesting to note that \theoremref{mainlcd} shows that when $L$ is sufficiently positive, we have $$\mathrm{lcd}(\mathbb{P}^N,\Sigma)>N-\textrm{depth}(\mathcal{O}_{\Sigma})=q_X-\nu(X)-2$$ as soon as $\nu(X)\geq 2$. 

\vspace{5pt}

Recall that any complex variety $Z$ comes equipped with the Du Bois complex $\underline{\Omega}_Z^{\bullet}$ which is an object in the bounded derived category of filtered complexes (\cite{dubois81}). The associated graded objects $$\underline{\Omega}_Z^p:=\textrm{Gr}_F^p(\underline{\Omega}_Z^{\bullet})[p]$$ are objects in the derived category of coherent sheaves. Writing ${\bf D}_Z(\underline{\Omega}_Z^p)$ to be the Grothendieck dual of $\underline{\Omega}_Z^p$ (see Sect. \ref{sechodge} for the definition), we obtain a vanishing result as a consequence of \theoremref{mainlcd} as  explained in \cite[Theorem 2.16]{PS}:

\begin{intro-corollary}\label{kodaira}
Assume $L$ satisfies $(Q_n)$-property and $\Sigma\neq\mathbb{P}^N$. Let $\mathcal{L}$ be an ample line bundle on $\Sigma$. 
\begin{enumerate}
    \item We have $${\bf H}^q({\bf D}_{\Sigma}(\underline{\Omega}_{\Sigma}^p)\otimes\mathcal{L})=0\textrm{ when }q-p>\begin{cases}
    n-1\textrm{ if }\nu(X)=0;\\
    n-2\textrm{ otherwise},
\end{cases}$$
or equivalently $${\bf H}^q(\underline{\Omega}_{\Sigma}^p\otimes\mathcal{L}^{-1})=0\textrm{ when }p+q<\begin{cases}
    n+2\textrm{ if }\nu(X)=0;\\
    n+3\textrm{ otherwise}.
\end{cases}$$
\item In particular, $H^q(\Omega_{\Sigma}^{[p]}\otimes \mathcal{L}^{-1})=0$ if one of the following holds:
\begin{itemize}
    \item $\nu(X)=0$, $p=0$ and $q<n+2$; or
    \item $\nu(X)\geq 1$, $H^k(\mathcal{O}_X)=0$ for all $1\leq k\leq p$, and $p+q<n+3$.
\end{itemize}
\end{enumerate}
\end{intro-corollary}

Analogous Kodaira-Akizuki-Nakano type vanishings were established in \cite[Corollary E]{ORS}. We further obtain the following 

\begin{intro-corollary}\label{corb}
Assume $L$ satisfies $(Q_n)$-property and $\Sigma\neq\mathbb{P}^N$. Then the following are equivalent:
\begin{enumerate}
    \item $\mathbb{Q}_{\Sigma}[2n+1]$ is perverse.
    \item Either $n=1$; or $n=2$ and $H^1(\mathcal{O}_X)=0$.
\end{enumerate}
\end{intro-corollary}
The above \corollaryref{corb} is an immediate consequence of \theoremref{mainlcd}, and the fact that if $Z \subset W$ is an embedding inside smooth $W$, $\mathbb{Q}_Z[\dim Z]$ is perverse if and only if $\textrm{lcd}(W,Z) = \textrm{codim}_W(Z)$ (c.f. \propositionref{per} which also extends \cite[Corollary 11.22]{MP} in view of \eqref{chain}). Now, given an embedding $Z\subset W$ inside smooth $W$, there are two special instances where this equality 
holds:
\begin{itemize}
\item[(a)] when $Z$ has quotient singularities (see \eqref{chain}, also \cite[Corollary 11.22]{MP} for a direct proof),
\item[(b)] when $Z$ is lci (in this case $\mathbb{Q}_Z[\dim Z]$ is perverse by \cite{BBD}).
\end{itemize}
Thus, it is useful to understand when $\Sigma$ has quotient singularities, and when $\Sigma$ is lci\footnote{We are very grateful to Mihnea Popa for suggesting these questions to us.}.

\vspace{5pt}
We answer the first question below:

\begin{intro-theorem}\label{mainquot}
Assume $L$ satisfies $(Q_1)$-property. Then $\Sigma$ has quotient singularities if and only if $(X,L)\cong (\mathbb{P}^1,\mathcal{O}_{\mathbb{P}^1}(d))$ with $d\geq 3$.
\end{intro-theorem}

A natural object associated to a variety $Z$ is $\mathbb{Q}_{Z}^H[\dim Z]$ which lives in the bounded derived category of mixed Hodge modules $D^b(\textrm{MHM}(Z))$. Denoting the intersection complex Hodge module as $\textrm{IC}_{Z}\mathbb{Q}^H$, there is a natural map $$\mathbb{Q}_{Z}^H[\dim Z]\to \textrm{IC}_{Z}\mathbb{Q}^H.$$ Now, $Z$ is called {\it rational homology manifold} or {\it rationally smooth} if the map above is an isomorphism. It is well-known that if $Z$ has quotient singularities then $Z$ is rationally smooth. In general, we have the following chain of implications:
\begin{equation}\label{chain}
    \textrm{$Z$ has quotient singularities $\implies$ $Z$ is rationally smooth $\implies$ $\mathbb{Q}_Z[\dim Z]$ is perverse.}
\end{equation}
Observe that by \theoremref{mainquot}, secant varieties of rational normal curves of degree $\geq 3$ are rationally smooth thanks to \eqref{chain} (for quartic rational normal curves, the fact that their secant varieties are rationally smooth can also be seen from their reduced Bernstein-Sato polynomial since it has no integer root by \eqref{rbs}). In fact, it is not hard to show that when $L$ satisfies $(Q_1)$-property, $\Sigma$ is rationally smooth if and only if $X\cong\mathbb{P}^1$. However, a formal proof of this fact will appear elsewhere.

\vspace{5pt}

Using \theoremref{previous}, \theoremref{mainlcd} and \theoremref{mainquot} (we also use \cite[Proposition 4.2(2)]{SVV}), we immediately deduce the following
\begin{intro-corollary}\label{cord}
Assume $L$ satisfies $(Q_n)$-property and $\Sigma\neq\mathbb{P}^N$. The following are equivalent:
\begin{enumerate}
    \item $\Sigma$ has quotient singularities,
    \item the singularities of $\Sigma$ are pre-$1$-rational,
    \item the singularities of $\Sigma$ are pre-$p$-rational for all $p$,
    \item $\mathrm{gl}(F_{\bullet}\mathcal{H}^{\mathrm{lcd}(\mathbb{P}^N,\Sigma)}_{{\Sigma}}(\mathcal{O}_{\mathbb{P}^N}))=0$,
    \item $(X,L)\cong (\mathbb{P}^1,\mathcal{O}_{\mathbb{P}^1}(d))$ with $d\geq 4$.
\end{enumerate}
\end{intro-corollary}
When $X$ is a smooth curve of genus $g$, $L$ is a line bundle of degree $\geq 2g+3$, and $\Sigma\neq\mathbb{P}^N$, any of the conditions (1)-(5) in the above result is equivalent to another set of equivalent conditions involving the Betti numbers and the regularity of $\Sigma$, given by \cite[Theorem 1.1]{CK} (with $q=2$). Furthermore, these conditions are also equivalent to $\Sigma$ being a Fano variety with log terminal singularities by \cite[Theorem 1.1]{ENP}, see \corollaryref{ratnorm} for the precise statement.

\vspace{5pt}

Next we consider the question which asks whether $\Sigma$ is lci or not. Recall that lci varieties are Gorenstein, and we provide below a classification of $(X,L)$ for which $\Sigma$ is ($\mathbb{Q}$-)Gorenstein: 
\begin{intro-theorem}\label{maingor}
Assume $L$ satisfies the following:
\begin{itemize}
    \item When $n=1$, $\mathrm{deg}(L)\geq 2g+3$ where $g$ is the genus.
    \item When $n\geq 2$, $L=K_X+(2n+2)A+B$ with $A$ very ample and $B$ nef line bundles.
\end{itemize}
Then:
\begin{enumerate}
    \item $\Sigma$ is $\mathbb{Q}$-Gorenstein if and only if $(X,L)$ is one of the following:
    \begin{itemize}
        \item $(\mathbb{P}^1,\mathcal{O}_{\mathbb{P}^1}(d))$ with $d\geq 3$,
        \item $(E,L)$ where $E$ is an elliptic curve, $\mathrm{deg}(L)\geq 5$,
        \item $(\mathbb{P}^2,\mathcal{O}_{\mathbb{P}^2}(6))$,
        \item $(\mathbb{P}^1\times\mathbb{P}^1,\mathcal{O}_{\mathbb{P}^1\times\mathbb{P}^1}(4,4))$,
        \item $(\mathbb{P}^3,\mathcal{O}_{\mathbb{P}^3}(4))$.
    \end{itemize}
    \item $\Sigma$ is Gorenstein if and only if $(X,L)$ is one of the following:
    \begin{itemize}
        \item $(\mathbb{P}^1,\mathcal{O}_{\mathbb{P}^1}(d))$ with $d=3$ or $4$,
        \item $(E,L)$ where $E$ is an elliptic curve, $\mathrm{deg}(L)\geq 5$,
        \item $(\mathbb{P}^3,\mathcal{O}_{\mathbb{P}^3}(4))$.
    \end{itemize}
\end{enumerate}
\end{intro-theorem}

We deduce the following:

\begin{intro-corollary}\label{corf}
Assume $L$ satisfies the assumptions of \theoremref{maingor} and $\Sigma\neq\mathbb{P}^N$. If $L$ satisfies $(Q_n)$-property, then $\Sigma$ is lci implies $(X,L)$ is one of the following:
    \begin{itemize}
        \item[(i)] $(\mathbb{P}^1,\mathcal{O}_{\mathbb{P}^1}(4))$,
        \item[(ii)] $(E,L)$ where $E$ is an elliptic curve, $\mathrm{deg}(L)\geq 5$.
    \end{itemize}
Moreover, if (i) holds, or if (ii) holds and $5\leq \mathrm{deg}(L)\leq 6$, then $\Sigma$ is lci (in fact a complete intersection). 
\end{intro-corollary}
We do not know if there is an elliptic normal curve of degree $\geq 7$ whose secant variety $\Sigma$ is lci.

\vspace{5pt}

The organization of this article can be summarized as follows: Sect. \ref{sechodge} is devoted to providing the necessary preliminaries. We describe the geometry of secant varieties in Sect. \ref{secgeom} and prove \theoremref{mainquot} and \theoremref{maingor}. We prove the main technical results in Sect. \ref{secheart} which we use in Sect. \ref{secfinal} to prove \theoremref{mainlcd}.

\vspace{5pt}

We work over the field of complex numbers $\mathbb{C}$. A {\it variety} is an integral separated scheme of finite type over $\mathbb{C}$. We use the additive and multiplicative notation for line bundles interchangeably, and the notation ``$=_{\mathbb{Q}}$'' is used for $\mathbb{Q}$-linear equivalence of divisors (or line bundles). \\

\noindent{\bf Acknowledgements.} We are very grateful to Mircea Musta\c{t}\u{a} and Mihnea Popa for valuable comments on an earlier draft of this work. We are indebted to Lei Song for his detailed comments on the earlier draft, and for conversations at various stages of this work. The second author also expresses his gratitude to Angelo Felice Lopez for patiently answering his questions and clarifying his doubts. We thank the referee for their comments and corrections that improved the exposition.

\section{Preliminaries and an overview of local cohomology}\label{sechodge}
This section is devoted to supplying the necessary preliminaries. Let us first introduce the notation for the Grothendieck duality functor: given a variety $Z$ with dualizing complex $\omega_Z^{\bullet}$, we set $${\bf D}_Z(-):={\bf R}\mathcal{H}\textit{om}_{\mathcal{O}_Z}(-,\omega_Z^{\bullet})[-\dim Z].$$
We mention an useful fact here: if $Z\subset W$ is a subvariety of codimension $c$, we have an isomorphism ${\bf D}_{W}(-)\cong{\bf D}_{Z}(-)[-c]$ for complexes of $\mathcal{O}_Z$-modules. The complexes $\underline{\Omega}_Z^p$ and ${\bf D}_Z(\underline{\Omega}_Z^p)$ encode various information regarding the singularities of $Z$. To compute them, it is often useful to work with a specific kind of log resolution of $Z$ that we define below:

\begin{definition}\label{defsl}
    A proper morphism $\mu:\Tilde{Z}\to Z$ is called a {\it strong log resolution} if $\mu$ is an isomorphism over $Z_{\textrm{sm}}:=Z\backslash Z_{\textrm{sing}}$, and $\mu^{-1}(Z_{\textrm{sing}})_{\textrm{red}}$ is a divisor with simple normal crossings.  
\end{definition}

In our study of the local cohomology of secant varieties $\Sigma$, we will use an explicit strong log resolution of $\Sigma$ that we describe in Sect. \ref{secgeom}.

\subsection{Local cohomological dimension and Hodge filtrations} 
Let $Z$ be a proper closed subscheme of a smooth variety $W$. For a quasi-coherent $\mathcal{O}_W$ module $\mathcal{M}$ and $q\in\mathbb{N}$, the {\it $q$-th local cohomology sheaf} $\mathcal{H}^q_{Z}(\mathcal{M})$ is by definition the $q$-th derived functor of $\underline{\Gamma}_Z(-)$ given by the subsheaf of local sections with support in $Z$. We refer to \cite{Har'} for more details on local cohomology. 

\smallskip

It is well-known that $\textrm{codim}_W(Z)=\min\left\{q\mid \mathcal{H}^q_{Z}(\mathcal{O}_W)\neq 0 \right\}$. However, the highest $q$ for which the $q$-th local cohomology sheaf $\mathcal{H}^q_{Z}(\mathcal{O}_W)\neq 0$ is a more mysterious object:

\begin{definition}\label{deflcd}
The {\it local cohomological dimension} of $Z$ in $W$ is defined as 
$$\textrm{lcd}(W,Z):=\textrm{max}\left\{q\mid \mathcal{H}_Z^q(\mathcal{O}_W)\neq 0\right\}.$$
\end{definition}

This invariant can be characterized alternatively in terms of the {\it de Rham depth} $\textrm{DRD}(Z)$, or in terms of the {\it rectified $\mathbb{Q}$-homological depth} $\textrm{RHD}_{\mathbb{Q}}(Z^{\textrm{an}})$ (see \cite{Ogu}, \cite{RSW} for details). One has the equality $\textrm{lcd}(W,Z)=\textrm{codim}_W(Z)$ if $Z\subset W$ is smooth. In fact, we have the following 


\begin{proposition}\label{per}
The shifted constant sheaf $\mathbb{Q}_Z[\dim Z]$ is perverse if and only if $\mathrm{lcd}(W,Z)=\mathrm{codim}_W(Z)$. 
\end{proposition}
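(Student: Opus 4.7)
The plan is to use the Riemann--Hilbert correspondence to translate the condition $\mathrm{lcd}(W,Z)=\mathrm{codim}_W(Z)$, which by definition is a vanishing statement for the local cohomology $\mathcal{D}_W$-modules $\mathcal{H}^q_Z(\mathcal{O}_W)$, into a perversity statement for a constructible complex on $W$, and then identify that complex with the Verdier dual of $\mathbb{Q}_Z[\dim Z]$. Set $n=\dim W$, $d=\dim Z$, and $c=n-d$. Since the cohomology sheaves of the derived local cohomology $\derR\underline{\Gamma}_Z(\mathcal{O}_W)$ are exactly the $\mathcal{H}^q_Z(\mathcal{O}_W)$, and these automatically vanish for $q<c$, my first observation is that $\mathrm{lcd}(W,Z)=c$ is equivalent to $\derR\underline{\Gamma}_Z(\mathcal{O}_W)$ being concentrated in cohomological degree $c$ in the derived category of regular holonomic $\mathcal{D}_W$-modules.

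Next I would pass to the constructible side via the de Rham functor $\DR_W$. With the convention $\DR_W(\mathcal{O}_W)=\mathbb{C}_W[n]$, the Riemann--Hilbert correspondence is a t-exact equivalence between the standard t-structure on $D^b_{\mathrm{rh}}(\mathcal{D}_W)$ and the perverse t-structure on $D^b_c(W,\mathbb{C})$; hence a bounded complex of regular holonomic $\mathcal{D}_W$-modules is concentrated in cohomological degree $c$ if and only if its image under $\DR_W$, shifted by $[c]$, is a perverse sheaf. Using the compatibility of $\DR_W$ with local cohomology together with the standard identification $\ius\mathbb{C}_W\cong\omega_Z^{\bullet}[-2n]$, I would compute
$$\DR_W(\derR\underline{\Gamma}_Z\mathcal{O}_W)[c]\;\cong\;\il\ius\mathbb{C}_W[n+c]\;\cong\;\il\omega_Z^{\bullet}[-d],$$
so $\mathrm{lcd}(W,Z)=c$ is equivalent to $\il\omega_Z^{\bullet}[-d]$ being perverse on $W$.

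To finish, I would apply Verdier duality on $W$. Since $\mathbb{D}_Z(\mathbb{Q}_Z[d])=\omega_Z^{\bullet}[-d]$ and $\il$ commutes with Verdier duality for the closed immersion $i$, one has $\mathbb{D}_W(\il\mathbb{Q}_Z[d])=\il\omega_Z^{\bullet}[-d]$; as Verdier duality preserves perversity and $\il$ is t-exact for the perverse t-structures, the latter complex is perverse on $W$ if and only if $\mathbb{Q}_Z[\dim Z]$ is perverse on $Z$, which completes the equivalence. The main thing to watch throughout is the cohomological bookkeeping under Riemann--Hilbert --- the convention for $\DR_W$ and its compatibility with $\derR\underline{\Gamma}_Z$ (equivalently, with the $\mathcal{D}$-module functors $i_+i^!$) --- but this is standard and poses no substantive obstacle.
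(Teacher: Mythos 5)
Your argument is correct, but it takes a genuinely different route from the paper's. The paper quotes the identity $\mathrm{lcd}(W,Z)=\dim W-\min\{\,j\mid {}^{\mathfrak p}\cohH^j(\QQ_Z)\neq 0\,\}$, which gives the forward implication immediately, and obtains the converse by combining the concentration of ${}^{\mathfrak p}\cohH^j(\QQ_Z[\dim Z])$ in degrees $j\le 0$ with a truncation--cone argument. You instead re-derive the equivalence from scratch: Grothendieck's vanishing $\cohH^q_Z(\mathcal{O}_W)=0$ for $q<c$ reduces $\mathrm{lcd}(W,Z)=c$ to $\derR\underline{\Gamma}_Z(\mathcal{O}_W)$ being concentrated in degree $c$, and then Riemann--Hilbert $t$-exactness, the identification $\DR_W(\derR\underline{\Gamma}_Z\mathcal{O}_W)[c]\cong \il\omega_Z^{\bullet}[-\dim Z]$, and Verdier duality translate this into perversity of $\QQ_Z[\dim Z]$. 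In effect you give a self-contained proof of the special case you need of the formula the paper cites, and your two inputs are precisely dual to the paper's two inputs: the vanishing $\cohH^q_Z(\mathcal{O}_W)=0$ for $q<\mathrm{codim}_W(Z)$ corresponds, under your chain of identifications, to the non-positivity of the perverse cohomology of $\QQ_Z[\dim Z]$ that the paper cites from Saito. What the paper's version buys is brevity given the reference; what yours buys is self-containedness and a transparent explanation of why the two conditions are Verdier-dual to one another. The shift bookkeeping you flag is indeed the only delicate point, together with the harmless passage between $\QQ$- and $\CC$-coefficients (perversity of $\QQ_Z[\dim Z]$ is equivalent to that of $\CC_Z[\dim Z]$, while $\DR_W$ lands in $\CC$-constructible complexes).
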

\begin{proof}
The local cohomological dimension can be described in terms of the perverse cohomology of the constant sheaf, namely, $$\mathrm{lcd}(W,Z) = \dim W - \min\{ j\in\ZZ \mid \ ^{\mathfrak{p}}\cohH^j(\QQ_Z)\neq 0 \},$$ see \cite{RSW}*{Theorem 1} or \cite{bblsz}*{\textsection 3}. This implies that if $\mathbb{Q}_Z[\dim Z]$ is perverse, then $\mathrm{lcd}(W,Z)=\mathrm{codim}_W(Z)$. Moreover, the perverse cohomology of $\mathbb{Q}_Z[\dim Z]$ is concentrated in non-positive degrees (see e.g. \cite{saito90}*{(4.5.6)}). Then, if $\mathrm{lcd}(W,Z)=\mathrm{codim}_W(Z)$, $^{\mathfrak{p}}\cohH^j(\QQ_Z[\dim Z])\neq 0$ only when $j=0$. Also, the previous fact means that we have a map $^{\mathfrak{p}}\cohH^0(\QQ_Z[\dim Z]) \to \QQ_Z[\dim Z] $, whose cone $C$ satisfies $^{\mathfrak{p}}\cohH^j(C)= 0$ for all $j\in\ZZ$, and therefore, $C=0$ \cite{BBD}*{Proposition 1.3.7}.
\end{proof}

If $\mathcal{M}$ has the structure of a left $\mathcal{D}_W$-module, then it turns out that $\mathcal{H}^q_Z(\mathcal{M})$ also inherits such a structure. Now, $\mathcal{O}_W$ is the underlying $\mathcal{D}_W$-module of the trivial Hodge module $\mathbb{Q}_W^H[\dim W]$, whence $\mathcal{H}_Z^q(\mathcal{O}_W)$ has the structure of a left $\mathcal{D}_{W}$-module. Moreover, denoting the embedding $Z\hookrightarrow W$ by $i$, the left $\mathcal{D}_{W}$-module $\mathcal{H}_Z^q(\mathcal{O}_W)$ is nothing but the underlying left $\mathcal{D}_{W}$-module of the mixed Hodge module $\mathcal{H}^q(i_*i^{!}\mathbb{Q}_W[\dim W])$, whence it carries a canonical Hodge filtration $F_{\bullet}\mathcal{H}_Z^q(\mathcal{O}_W)$. Moreover, $F_p\mathcal{H}_Z^q(\mathcal{O}_W)=0$ for all $p<0$ (see \cite[Remark 3.4]{MP}). 

\begin{definition}\label{defgenlevel}
A good filtration $F_{\bullet}\mathcal{M}$ on a left $\mathcal{D}_W$-module is {\it generated at level $k\in\mathbb{Z}$} if $$F_{k+k'}\mathcal{M}=F_{k'}\mathcal{D}_W\cdot F_{k}\mathcal{M}\,\textrm{ for all }\, k'\geq 0,\textrm{ equivalently }F_{m+1}\mathcal{M}=F_1\mathcal{D}_W\cdot F_m\mathcal{M}\textrm{ for all $m\geq k$.}$$ 
 Set $\textrm{gl}(F_{\bullet}\mathcal{M}):=\min\left\{k\mid\textrm{$F_{\bullet}\mathcal{M}$ is generated at level $k$}\right\}$.
\end{definition}
If $Z\subset W$ is smooth with $\textrm{codim}_W(Z)=c$, then the Hodge filtration on $\mathcal{H}_Z^c(\mathcal{O}_W)$ is generated at level $0$. We refer to \cite{MP} for more details on the Hodge filtrations on local cohomology modules, their generation levels and local cohomological dimension.

\section{Geometry of secant varieties}\label{secgeom} Let $L$ be a very ample line bundle on a smooth projective variety $X$ of dimension $n$ that induces the embedding $X\hookrightarrow\mathbb{P}^N:=\mathbb{P}(H^0(L))$. The {\it secant variety} $\Sigma(X,L)$ by definition is the Zariski closure of the union of 2-secant lines of $X$, i.e., we have the commutative diagram:
\begin{equation*}
    \begin{tikzcd}
    X\arrow[rr, hook]\arrow[dr, hook] & & \mathbb{P}^N=\mathbb{P}(H^0(L))\\
    & \Sigma(X,L)\arrow[ur, hook]
\end{tikzcd}
\end{equation*}
We will simply write $\Sigma$ for $\Sigma(X,L)$ to ease the notation.

\vspace{5pt}

We introduce the positivity properties of $L$ that we will require in the sequel. By definition, $L$ is called {\it $k$-very ample} for an integer $k\geq 0$ if the evaluation map of global sections $H^0(L)\to H^0(L\otimes \mathcal{O}_{\xi})$ is surjective for any $0$--dimensional subscheme $\xi$ of length $k+1$. For any $x\in X$, we denote by $\mathcal{I}_x$ the ideal sheaf of $x\in X$. We further set $b_x:F_x\to X$ to be the blow-up of $X$ at $x$, and $E_x$ the exceptional divisor.

\begin{definition}[\cite{ORS}]\label{qp}
Let $p\geq 0$ be an integer, and let $L$ be a 3-very ample line bundle on $X$. Then $L$ is said to satisfy {\it $(Q_p)$-property} if the following conditions are satisfied for all $x\in X$:
\begin{enumerate}
    \item the natural map $\text{Sym}^iH^0(L\otimes \mathcal{I}_x^2)\to H^0(L^{\otimes i}\otimes \mathcal{I}_x^{2i})$ is surjective for all $i\geq 1$,
    \item $b_x^*L(-2E_x)$ is ample, and
    \item $H^i(\Omega^q_{F_x}\otimes b_x^*(jL)(-2jE_x))=0$ for all $i,j\geq 1$, $0\leq q\leq p$.
\end{enumerate}
\end{definition}
It is immediate from the above definition that if $L$ satisfies $(Q_p)$-property, then it satisfies $(Q_k)$-property for all $0\leq k\leq p$. Furthermore, if $L$ satisfies $(Q_n)$-property, then it satisfies $(Q_p)$-property for all $p\geq 0$.

\vspace{5pt}

Throughout this article, we tacitly assume that $L$ is $3$-very ample.

\subsection{Log resolutions of \texorpdfstring{$\Sigma$}{TEXT}} 
Under our assumption of $3$-very ampleness of $L$, we have an explicit log resolution of $\Sigma$ 
coming from \cite{Ver} (see also \cite{Ull16}) that we now describe. 

\smallskip


Let us denote the Hilbert scheme of two points on $X$ by $X^{[2]}$. Recall that $X^{[2]}$ is a smooth projective variety. The universal subscheme $\Phi$ is the incidence variety:
$$\Phi:=\left\{(x,\xi)\in X\times X^{[2]}: x\in\xi\right\}\subset X\times X^{[2]}.$$ 
Clearly $\Phi$ is equipped with two natural projections $q:\Phi\to X$ and $\sigma:\Phi\to X^{[2]}$. Moreover, it turns out that $\Phi\cong \textrm{Bl}_{\Delta}(X\times X)$, i.e., it is isomorphic to the blow-up $X\times X$ along the diagonal $\Delta$. Let $b_{\Delta}:\Phi\cong\textrm{Bl}_{\Delta}(X\times X)\to X\times X$ be the blow-up morphism. Then we have the following commutative diagram:
\begin{equation}\label{diag1}
    \begin{tikzcd}
    & \Phi\arrow[dl, swap, "\sigma"]\arrow[rr, "b_{\Delta}"]\arrow[dr, "q"] & & X\times X\arrow[dl, swap, "p_1"]\\
    X^{[2]} && X
\end{tikzcd}
\end{equation}
Now, the vector bundle $\mathcal{E}_L:=\sigma_*q^*L$ is globally generated since $L$ is very ample. The evaluation map of its global sections
induces $f:\mathbb{P}(\mathcal{E}_L)\to\mathbb{P}(H^0(L))$ which surjects onto the secant variety $\Sigma$. Consequently, we have the surjective map $t:\mathbb{P}(\mathcal{E}_L)\to \Sigma$. The main result is the following: the map $t$ is a log resolution of $\Sigma$; moreover, if $\Sigma\neq\mathbb{P}(H^0(L))$, the map $t:\mathbb{P}(\mathcal{E}_L)\to \Sigma$ is a strong log resolution of $\Sigma$ (for the last part, see {\cite[Corollary 2.7]{ORS}}).

\smallskip

Recall that $F_x\cong\textrm{Bl}_xX$ is the blow-up of $X$ at $x$ with exceptional divisor $E_x$, and we denote the blow-up morphism by $b_x:F_x\cong\textrm{Bl}_xX\to X$. 
The diagram \eqref{diag1} induces the following commutative diagram with Cartesian squares:
\begin{equation}\label{diag2}
\begin{tikzcd}
    \mathbb{P}^{n-1}\cong E_x\arrow[r, hook]\arrow[d] & F_x\arrow[r, hook]\arrow[d, "b_x"] & \Phi\cong\textrm{Bl}_{\Delta}(X\times X)\arrow[d, "b_{\Delta}"]\arrow[dd, bend left=60, "q"]\\
    \left\{(x,x)\right\}\arrow[r, hook]& \left\{x\right\}\times X \arrow[r, hook]\arrow[d] & X\times X\arrow[d, "p_1"]\\
    & \left\{x\right\}\arrow[r, hook] & X
\end{tikzcd}
\end{equation}
We made a slight abuse of notation in the above: we denote the map $F_x\to\left\{x\right\}\times X$ by the same symbol $b_x$ (this is because it sends $y\in F_x$ to $(x,b_x(y))$).

It turns out that $\Phi\cong t^{-1}(X)$. 
To summarize, for any $x\in X$, we have the following diagram with Cartesian squares and surjective vertical arrows:
\begin{equation}\label{ulldiag}
    \begin{tikzcd}
    F_x\arrow[r, hook]\arrow[d] & \Phi\arrow[r, hook]\arrow[d, "q"] & \mathbb{P}(\mathcal{E}_L)\arrow[d, "t"]\arrow[dr, "f"] &\\
    \{x\}\arrow[r, hook] & X\arrow[r, hook] & \Sigma\arrow[r, hook] & \mathbb{P}(H^0(L))
\end{tikzcd}
\end{equation}
We will frequently use the fact that $\dim\Sigma=2n+1$. Also, the map $q:\Phi\to X$ is smooth by \cite[Lemma 2.1]{CS18}. The exact sequence 
\begin{equation}\label{gen1ex}
    0\to q^*\Omega_X^1\to\Omega_{\Phi}^1\to\Omega_{\Phi/X}^1\to 0
\end{equation}
when restricted to $F_x$, yields the following exact sequence 
\begin{equation}\label{filt1}
    0\to\mathcal{O}_{F_x}^{\oplus n}\to\Omega_{\Phi}^1|_{F_x}\to\Omega_{F_x}^1\to 0
\end{equation}
Notice that the above shows $\mathcal{N}_{F_x/\Phi}^*\cong \mathcal{O}_{F_x}^{\oplus n}$. Moreover, \cite[Proof of Lemma 2.3]{Ull16} gives 
\begin{equation}\label{further}
    \mathcal{N}_{\Phi/\mathbb{P}(\mathcal{E}_L)}^*|_{F_x}\cong b_x^*(L)(-2E_x).
\end{equation} 
Observe also that by taking determinants of \eqref{filt1}, we obtain 
\begin{equation}\label{kphi}
    \omega_{\Phi}|_{F_x}\cong \omega_{F_x}
\end{equation}

Let us introduce a few more notation. We set $j_{\Delta}:\Delta\hookrightarrow X\times X$, and $j_{\Delta}':E_{\Delta}\hookrightarrow\Phi$ to be the natural embeddings where $E_{\Delta}$ denotes the exceptional divisor of $b_{\Delta}$.
Observe that \eqref{diag1} also induces the commutative diagram with Cartesian squares:
\begin{equation}\label{diag3}
    \begin{tikzcd}
    \mathbb{P}^{n-1}\cong E_x=F_x\cap E_{\Delta}\arrow[r, hook]\arrow[d] & E_{\Delta}\arrow[r, hook, "j_{\Delta}'"]\arrow[d, "q_{\Delta}"] & \Phi\cong\textrm{Bl}_{\Delta}(X\times X)\arrow[d, "b_{\Delta}"]\arrow[dd, bend left=60, "q"]\\
    \left\{(x,x)\right\}\arrow[r, hook]& \Delta\arrow[r, hook, "j_{\Delta}"]\arrow{dr}{\cong}[swap]{p_0} & X\times X\arrow[d, "p_1"]\\
    & & X
\end{tikzcd}
\end{equation}
where $q_{\Delta}$ is the natural map, and $p_0=p_1\circ j_{\Delta}$. As before, since the map $q_{\Delta}$ is smooth, we have the exact sequence 
\begin{equation}
    0\to q_{\Delta}^*\Omega_{\Delta}^1\to \Omega_{E_{\Delta}}^1\to \Omega_{E_{\Delta}/\Delta}^1\to 0.
\end{equation}
Restricting the above sequence on $E_x$, we obtain the exact sequence
\begin{equation}\label{filt2}
    0\to\mathcal{O}_{E_x}^{\oplus n}\to\Omega^1_{E_{\Delta}}|_{E_x}\to \Omega_{E_x}^1\to 0.
\end{equation}
Notice that it is split (since $\textrm{Ext}^1(\Omega_{E_x}^1,\mathcal{O}_{E_x}^{\oplus n})=0$) whence we get the splitting 
\begin{equation}\label{another}
    \Omega_{E_{\Delta}|_{E_x}}^1\cong \mathcal{O}_{E_x}^{\oplus n}\oplus \Omega_{E_x}^1.
\end{equation}

We recall a result here that will be useful in the sequel:

\begin{proposition}[{\cite[Proposition 3.3]{ORS}}, see {\cite[Proof of Proposition 3.2]{CS18}} when $p=0$]\label{rivanishing}
Let $p\geq 0$, and assume $L$ satisfies $(Q_p)$-property. Then $$R^it_*\Omega_{\mathbb{P}(\mathcal{E}_L)}^{k}(\log\Phi)(-\Phi)=0\textrm{ for all }i\geq 1,\, 0\leq k\leq p.$$
\end{proposition}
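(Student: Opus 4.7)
The approach is to use the residue sequence for the smooth divisor $\Phi \subset \mathbb{P}(\mathcal{E}_L)$ to split the vanishing into two pieces, and then dispose of each fiberwise on the log resolution $t$ using the explicit restriction formulas \eqref{filt1} and \eqref{further} together with the $(Q_p)$-property. Twisting the standard residue sequence by $\mathcal{O}(-\Phi)$ produces
\[
0 \to \Omega^k_{\mathbb{P}(\mathcal{E}_L)}(-\Phi) \to \Omega^k_{\mathbb{P}(\mathcal{E}_L)}(\log \Phi)(-\Phi) \to \Omega^{k-1}_\Phi \otimes \mathcal{N}^*_{\Phi/\mathbb{P}(\mathcal{E}_L)} \to 0,
\]
so after applying $R^\bullet t_*$ it suffices to show the vanishing of the higher direct images of the two outer sheaves in degrees $i \ge 1$ (the right-hand one only contributing when $k \ge 1$). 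In either case these $R^i t_*$ are supported on $X \subset \Sigma$, since $t$ is a biregular isomorphism off $\Phi$.

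For the right-hand term, the restriction $t|_\Phi$ factors as $\Phi \xrightarrow{q} X \hookrightarrow \Sigma$ through the smooth morphism $q$ of diagram \eqref{ulldiag}. Cohomology and base change for $q$ reduce the vanishing to $H^i(F_x, \Omega^{k-1}_\Phi|_{F_x} \otimes \mathcal{N}^*_{\Phi/\mathbb{P}(\mathcal{E}_L)}|_{F_x}) = 0$ for every $x \in X$ and $i \ge 1$. By \eqref{further} the conormal restriction is $b_x^*L(-2E_x)$, while taking exterior powers of the split sequence \eqref{filt1} filters $\Omega^{k-1}_\Phi|_{F_x}$ so that its graded pieces are direct sums of $\Omega^q_{F_x}$ with $0 \le q \le k-1 \le p-1$. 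The claim then follows from condition (3) of $(Q_p)$ with $j = 1$.

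For the left-hand term $\Omega^k_{\mathbb{P}(\mathcal{E}_L)}(-\Phi)$ I would invoke the theorem on formal functions, which reduces the problem near $x \in X$ to the vanishing of $H^i$ of the $m$-th infinitesimal thickenings of the fiber $F_x = t^{-1}(x)$ inside $\mathbb{P}(\mathcal{E}_L)$, for all $m \ge 0$. Filtering these thickenings by powers of the ideal of $F_x$, the graded pieces take the form $\Omega^k_{\mathbb{P}(\mathcal{E}_L)}|_{F_x} \otimes b_x^*L(-2E_x) \otimes \mathrm{Sym}^m \mathcal{N}^*_{F_x/\mathbb{P}(\mathcal{E}_L)}$. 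The chain $F_x \subset \Phi \subset \mathbb{P}(\mathcal{E}_L)$ combined with \eqref{filt1} and \eqref{further} yields a two-step filtration on $\mathcal{N}^*_{F_x/\mathbb{P}(\mathcal{E}_L)}$ with graded pieces $\mathcal{O}_{F_x}^{\oplus n}$ and $b_x^*L(-2E_x)$, and a parallel analysis filters $\Omega^k_{\mathbb{P}(\mathcal{E}_L)}|_{F_x}$ so that its graded pieces are $\Omega^q_{F_x}$ (for $0 \le q \le k$), possibly twisted once by $b_x^*L(-2E_x)$. Assembling everything, every graded piece of the thickening is (up to trivial summands) of the form $\Omega^q_{F_x} \otimes b_x^*(jL)(-2jE_x)$ with $0 \le q \le k \le p$ and $j \ge 1$, hence acyclic in positive degree by $(Q_p)$.

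The main obstacle I anticipate is the formal-functions step in the left-hand case: the fiberwise $(Q_p)$-vanishings must be promoted, uniformly in $m$, to a statement about the inverse limit, which will require a Mittag-Leffler check alongside careful bookkeeping of the two nested filtrations. Part~(b) by contrast is a fairly direct base change plus filtration argument and should cause no trouble.
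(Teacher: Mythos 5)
Your argument is correct, but it is organized around a different short exact sequence than the one the paper indicates was used in the cited proof (\cite[Proposition 3.3]{ORS}): the paper records that the restriction sequence \eqref{prev1}, $0\to\Omega^k_{\mathbb{P}(\mathcal{E}_L)}(\log\Phi)(-\Phi)\to\Omega^k_{\mathbb{P}(\mathcal{E}_L)}\to\Omega^k_{\Phi}\to 0$, was the starting point there, whereas you use the residue sequence \eqref{prev2} twisted by $\mathcal{O}(-\Phi)$. Both routes converge on the same fiberwise computation: every graded piece that appears is of the form $\Omega^q_{F_x}\otimes b_x^*(jL)(-2jE_x)$ with $0\le q\le p$ and $j\ge 1$, and is killed by condition (3) of the $(Q_p)$-property (conditions (1)--(2) are not needed for the vanishing of the higher direct images; they enter elsewhere, e.g.\ in identifying $t_*$ and proving normality). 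Your decomposition has the mild advantage that the term supported on $\Phi$ is disposed of by ordinary cohomology-and-base-change along the smooth morphism $q$, confining the formal-functions analysis to the locally free term $\Omega^k_{\mathbb{P}(\mathcal{E}_L)}(-\Phi)$; in the \eqref{prev1} route the thickening analysis must be applied to $\Omega^k_{\mathbb{P}(\mathcal{E}_L)}(\log\Phi)(-\Phi)$ or to $\Omega^k_{\mathbb{P}(\mathcal{E}_L)}$ directly. Two small remarks. First, the Mittag--Leffler ``obstacle'' you anticipate is vacuous: your filtration argument shows, by induction on $m$ via $0\to\mathcal{F}\otimes I^m/I^{m+1}\to\mathcal{F}\otimes\mathcal{O}/I^{m+1}\to\mathcal{F}\otimes\mathcal{O}/I^{m}\to 0$ (exact on the left because $\mathcal{F}=\Omega^k_{\mathbb{P}(\mathcal{E}_L)}(-\Phi)$ is locally free), that $H^i$ of \emph{every} thickening vanishes for $i\ge 1$; the inverse limit in the theorem on formal functions is then a limit of zero modules, so $(R^it_*\mathcal{F})^{\wedge}_x=0$ and the stalk vanishes by faithful flatness of completion. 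The only bookkeeping required is the standard cofinality of the systems $\{I_{F_x}^m\}$ and $\{\mathfrak{m}_x^m\mathcal{O}_{\mathbb{P}(\mathcal{E}_L)}\}$, which holds because $t^{-1}(x)=F_x$ set-theoretically. Second, \eqref{filt1} is not asserted to be split in the paper (only \eqref{filt2} is), but splitness is irrelevant: the filtration on exterior powers provided by \lemmaref{compatibility} already yields the graded pieces you use.
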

To prove the above, the following exact sequence was used which holds for any $j\geq 0$:
\begin{equation}\label{prev1}
    0\to \Omega^j_{\mathbb{P}(\mathcal{E}_L)}(\log\Phi)(-\Phi)\to \Omega^j_{\mathbb{P}(\mathcal{E}_L)}\to \Omega^j_{\Phi}\to 0.
\end{equation}
Moreover, we also have the exact sequence which holds for any $j\geq 1$:
\begin{equation}\label{prev2}
    0\to \Omega^j_{\mathbb{P}(\mathcal{E}_L)}\to \Omega^j_{\mathbb{P}(\mathcal{E}_L)}(\log\Phi)\to \Omega^{j-1}_{\Phi}\to 0. 
\end{equation}
Further, when $j\geq 1$, the above two sequences fit together in the commutative diagram below with exact rows and columns which will also be useful for us:
\begin{equation}\label{omega}
\begin{tikzcd}
    & & 0\arrow[d] & 0\arrow[d] &\\
    0\arrow[r] & \Omega^j_{\mathbb{P}(\mathcal{E}_L)}(\log\Phi)(-\Phi)\arrow[r]\arrow[d, equal] & \Omega^j_{\mathbb{P}(\mathcal{E}_L)}\arrow[r]\arrow[d] & \Omega^j_{\Phi}\arrow[r]\arrow[d] & 0\\
    0\arrow[r] & \Omega^j_{\mathbb{P}(\mathcal{E}_L)}(\log\Phi)(-\Phi)\arrow[r] & \Omega^j_{\mathbb{P}(\mathcal{E}_L)}(\log\Phi)\arrow[r]\arrow[d] & \Omega^j_{\mathbb{P}(\mathcal{E}_L)}(\log\Phi)|_{\Phi}\arrow[r]\arrow[d] & 0\\
    & & \Omega^{j-1}_{\Phi}\arrow[r,equal]\arrow[d] &\Omega^{j-1}_{\Phi}\arrow[d] &\\
    && 0 & 0 &
\end{tikzcd}
\end{equation}

Recall from the Introduction that we have set 
$$\nu(X):=\max\left\{i\mid 0\leq i\leq n-1 \textrm{ and } H^j(\mathcal{O}_X)=0\,\textrm{ for all }\, 1\leq j\leq i\right\}.$$ 
Our convention is that $\nu(X)=0$ if the above set is empty. This invariant is related to the depth of the structure sheaf of $\Sigma$ as follows:
\begin{theorem}[{\cite[Theorem 1.3]{CS18}}]\label{depth} If $L$ satisfies $(Q_0)$-property, then 
    $\mathrm{depth}(\mathcal{O}_{\Sigma})=n+2+\nu(X)$.
\end{theorem}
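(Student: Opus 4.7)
The plan is to exploit the strong log resolution $t\colon \mathbb{P}(\mathcal{E}_L)\to\Sigma$ described in \eqref{ulldiag}. Since under $(Q_0)$-property the singular locus of $\Sigma$ is contained in $X$, at points $y\in\Sigma\setminus X$ we have $\depth_y(\mathcal{O}_\Sigma)=2n+1\geq n+2+\nu(X)$, so the entire content of the statement is located at closed points $x\in X$, where $\depth_x(\mathcal{O}_\Sigma)$ equals the smallest $i$ with $H^i_{\{x\}}(\mathcal{O}_\Sigma)\neq 0$. Normality of $\Sigma$ (\cite{Ull16}) gives $t_\ast\mathcal{O}_{\mathbb{P}(\mathcal{E}_L)}=\mathcal{O}_\Sigma$, so the failure of $Rt_\ast\mathcal{O}_{\mathbb{P}(\mathcal{E}_L)}$ to be concentrated in degree zero will control the depth.

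The main computational step is to identify $R^qt_\ast\mathcal{O}_{\mathbb{P}(\mathcal{E}_L)}$ for $q\geq 1$, which is set-theoretically supported on $X$. Applying the theorem on formal functions along the fiber $F_x\cong\mathrm{Bl}_xX$, the stalk at $x\in X$ becomes an inverse limit of cohomologies of infinitesimal thickenings of $F_x$ inside $\mathbb{P}(\mathcal{E}_L)$. Filtering by powers of $\mathcal{N}^\ast_{F_x/\mathbb{P}(\mathcal{E}_L)}$, whose structure is determined by the splitting \eqref{filt1} giving $\mathcal{N}^\ast_{F_x/\Phi}\cong\mathcal{O}_{F_x}^{\oplus n}$ and the identification \eqref{further} giving $\mathcal{N}^\ast_{\Phi/\mathbb{P}(\mathcal{E}_L)}|_{F_x}\cong b_x^\ast L(-2E_x)$, the graded pieces of the filtration involve cohomologies of symmetric powers of $b_x^\ast L(-2E_x)$ twisted by copies of $\mathcal{O}_{F_x}$. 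The vanishing statements in \definitionref{qp}(3) (with $p=0$) kill all positive symmetric-power contributions, leaving only the constant piece $H^q(F_x,\mathcal{O}_{F_x})=H^q(X,\mathcal{O}_X)$ (the last equality holds because blowing up a smooth point on a smooth variety does not change the structure-sheaf cohomology). A base-change argument combining this with the smoothness of $q\colon\Phi\to X$ produces
\[
R^qt_\ast\mathcal{O}_{\mathbb{P}(\mathcal{E}_L)}\;\cong\; i_\ast\mathcal{O}_X\otimes_{\CC}H^q(X,\mathcal{O}_X)\qquad\text{for } q\geq 1,
\]
where $i\colon X\hookrightarrow\Sigma$. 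By definition of $\nu(X)$, the smallest $q\geq 1$ with nonvanishing higher direct image is $q=\nu(X)+1$.

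To convert this into a depth statement, I would apply $R\underline{\Gamma}_{\{x\}}$ to the distinguished triangle
\[
\mathcal{O}_\Sigma\longrightarrow Rt_\ast\mathcal{O}_{\mathbb{P}(\mathcal{E}_L)}\longrightarrow C\overset{+1}{\longrightarrow},\qquad C:=\tau_{\geq 1}Rt_\ast\mathcal{O}_{\mathbb{P}(\mathcal{E}_L)}.
\]
The middle term computes to $R\underline{\Gamma}_{F_x}(\mathcal{O}_{\mathbb{P}(\mathcal{E}_L)})$, whose cohomology vanishes in degrees $<n+1$ since $F_x$ has codimension $n+1$ in the smooth variety $\mathbb{P}(\mathcal{E}_L)$. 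For the right term, the spectral sequence $H^p_{\{x\}}(\mathcal{H}^q(C))\Rightarrow H^{p+q}_{\{x\}}(\Sigma,C)$ combined with $H^p_{\{x\}}(X,\mathcal{O}_X)=0$ for $p<n$ (smoothness of $X$ of dimension $n$) and the identification of $\mathcal{H}^q(C)$ above shows $H^i_{\{x\}}(\Sigma,C)=0$ for $i\leq n+\nu(X)$ and nonzero at $i=n+\nu(X)+1$. A diagram chase in the long exact sequence attached to the triangle then forces $H^i_{\{x\}}(\mathcal{O}_\Sigma)=0$ for $i\leq n+1+\nu(X)$ and produces a nonzero class at $i=n+2+\nu(X)$, yielding the claimed formula.

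The main obstacle will be the on-the-nose identification of $R^qt_\ast\mathcal{O}_{\mathbb{P}(\mathcal{E}_L)}$: the theorem on formal functions only gives an inverse limit whose graded pieces involve symmetric powers of the conormal bundle, and the vanishings encoded in $(Q_0)$-property are exactly what is needed to collapse this filtration down to its bottom piece $H^q(X,\mathcal{O}_X)$. Once that identification is in place, the passage to depth via the long exact sequence is essentially formal bookkeeping.
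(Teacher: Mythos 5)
This theorem is not proved in the paper at all: it is imported verbatim from Chou--Song \cite[Theorem 1.3]{CS18}, so your proposal can only be measured against that source and against the ingredients the present paper records. Your skeleton --- identify $R^qt_*\mathcal{O}_{\mathbb{P}(\mathcal{E}_L)}\cong i_*\bigl(\mathcal{O}_X\otimes H^q(\mathcal{O}_X)\bigr)$ for $q\geq 1$ and then apply $R\underline{\Gamma}_{\{x\}}$ to the triangle $\mathcal{O}_\Sigma\to\mathbf{R}t_*\mathcal{O}_{\mathbb{P}(\mathcal{E}_L)}\to C$ --- is indeed the Chou--Song strategy, and that identification is correct. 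But formal functions is the clumsy route to it: it only computes the formal completion of the stalk, and as you have set it up the surviving contribution is not a single ``constant piece'' $H^q(\mathcal{O}_X)$ but a completed symmetric algebra's worth of copies coming from the trivial factor $\mathcal{N}^*_{F_x/\Phi}\cong\mathcal{O}_{F_x}^{\oplus n}$ of the conormal bundle (which is consistent with the claimed sheaf, but your bookkeeping does not say so). The efficient argument is the sequence \eqref{prev1} with $j=0$, i.e.\ $0\to\mathcal{O}(-\Phi)\to\mathcal{O}\to\mathcal{O}_\Phi\to 0$, together with $R^it_*\mathcal{O}(-\Phi)=0$ for $i\geq 1$ (\propositionref{rivanishing} at $p=0$) and \eqref{cs}.

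The genuine gap is in the final chase. You dispose of the middle term by asserting that $H^i_{\{x\}}(\mathbf{R}t_*\mathcal{O})=H^i_{F_x}(\mathbb{P}(\mathcal{E}_L),\mathcal{O})$ vanishes for $i<n+1$ ``since $F_x$ has codimension $n+1$''. That is true but far from sufficient: in the range $n+1\leq i\leq n+1+\nu(X)$ the long exact sequence only yields an injection $H^i_{\{x\}}(\mathcal{O}_\Sigma)\hookrightarrow H^i_{F_x}(\mathbb{P}(\mathcal{E}_L),\mathcal{O})$, and the target is not controlled by codimension (already $H^{n+1}_{F_x}=H^0(\mathcal{H}^{n+1}_{F_x}(\mathcal{O}))$ with $\mathcal{H}^{n+1}_{F_x}(\mathcal{O})\neq 0$). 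As written, the chase proves only $\depth_x\mathcal{O}_\Sigma\geq n+1$ and never detects $\nu(X)$. What is actually needed is $H^i_{F_x}(\mathbb{P}(\mathcal{E}_L),\mathcal{O})=0$ for all $i\leq 2n$, and this is where the positivity hypothesis must enter a second time: since $F_x$ is regularly embedded of codimension $n+1$, the sheaf $\mathcal{H}^{n+1}_{F_x}(\mathcal{O})$ is filtered with graded pieces $\Sym^k\mathcal{N}_{F_x/\mathbb{P}(\mathcal{E}_L)}\otimes\det\mathcal{N}_{F_x/\mathbb{P}(\mathcal{E}_L)}$, whose $H^{j}$ for $j<n$ are Serre--dual on $F_x$ to groups of the form $H^{n-j}\bigl(\omega_{F_x}\otimes\Sym^k\mathcal{N}^*_{F_x/\mathbb{P}(\mathcal{E}_L)}\otimes b_x^*L(-2E_x)\bigr)$; filtering $\mathcal{N}^*_{F_x/\mathbb{P}(\mathcal{E}_L)}$ via \eqref{filt1} and \eqref{further} reduces these to $H^{n-j}\bigl(\omega_{F_x}\otimes b_x^*((a+1)L)(-(2a+2)E_x)\bigr)$, which vanish by Kodaira vanishing because $b_x^*L(-2E_x)$ is ample (condition (2) of \definitionref{qp}). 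With that extra vanishing your chase does close: $H^i_{\{x\}}(\mathcal{O}_\Sigma)\cong H^{i-1}_{\{x\}}(C)$ for $i\leq 2n$, the first nonvanishing occurs at $i=n+2+\nu(X)$, and the boundary case $H^{>0}(\mathcal{O}_X)=0$ (where $C=0$ and the depth must be $2n+1$) comes out correctly. So the missing ingredient is precisely this second use of $(Q_0)$, without which the argument cannot get past degree $n+1$.
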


We finish this brief introduction to secant varieties by recalling the following basic result regarding their singularities:

\begin{theorem}[{\cite{Ull16}}, {\cite{CS18}}, {\cite{ORS}}]\label{previous} The following statements hold:
\begin{enumerate}
    \item If $L$ satisfies $(Q_0)$-property, then:
    \begin{itemize}
        \item[(a)] $\Sigma$ is normal, and its singularities are Du Bois.
        \item[(b)] $\Sigma$ is Cohen-Macaulay if and only if $\nu(X)=0$.
        \item[(c)] $\Sigma$ has weakly rational singularities if and only if $H^n(\mathcal{O}_X)=0$.
        \item[(d)] $\Sigma$ has rational singularities if and only if $H^i(\mathcal{O}_X)=0$ for all $i>0$.
    \end{itemize}
    \item Let $p\geq 1$ and assume $L$ satisfies $(Q_p)$-property. Then:
    \begin{itemize}
        \item[(a)] The singularities of $\Sigma$ are pre-$p$-Du Bois.
        \item[(b)] If $p\leq \lfloor\frac{n}{2}\rfloor$, then the singularities of $\Sigma$ are $p$-Du Bois if and only if $p\leq \nu(X)$.
    \end{itemize}
    \item Assume $L$ satisfies $(Q_1)$-property. Then the singularities of $\Sigma$ are pre-$1$-rational if and only if $X\cong\mathbb{P}^1$.
\end{enumerate}
\end{theorem}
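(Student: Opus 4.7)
The plan is to prove all three parts of Theorem \ref{previous} through a single framework based on the strong log resolution $t: \mathbb{P}(\mathcal{E}_L) \to \Sigma$ from \eqref{ulldiag}, whose exceptional divisor is $\Phi$. The unifying principle is the identification
$$\underline{\Omega}_\Sigma^k \simeq \mathbf{R}t_*\Omega^k_{\mathbb{P}(\mathcal{E}_L)}(\log\Phi)(-\Phi),$$
which holds for a strong log resolution, together with the observation that higher direct images supported on $\Phi$ reduce, by flat base change along the smooth morphism $q: \Phi \to X$, to cohomology on the fibers $F_x = \mathrm{Bl}_x X$ (and blow-ups preserve $H^i(\mathcal{O})$).

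For part (1), normality in (1a) follows from $t_*\mathcal{O}_{\mathbb{P}(\mathcal{E}_L)} = \mathcal{O}_\Sigma$ via Stein factorization; the Du Bois conclusion in (1a) then follows from Proposition \ref{rivanishing} at $k=0$ combined with the sequence \eqref{prev1} at $j=0$, which yields $\mathcal{O}_\Sigma \simeq \underline{\Omega}_\Sigma^0$. The Cohen-Macaulay characterization (1b) is an immediate numerical consequence of Theorem \ref{depth} upon comparison with $\dim \Sigma = 2n+1$. For the (weakly) rational characterizations (1c)-(1d), what is needed is $R^it_*\mathcal{O}_{\mathbb{P}(\mathcal{E}_L)}=0$ for $i>0$; via \eqref{prev1} and Proposition \ref{rivanishing}, this reduces to $R^iq_*\mathcal{O}_\Phi = 0$, which by flat base change equals $H^i(X,\mathcal{O}_X) \otimes \mathcal{O}_X$, giving the stated cohomological criteria.

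For part (2a), Proposition \ref{rivanishing} under the $(Q_p)$-property directly forces each $\underline{\Omega}_\Sigma^k$ ($0\leq k\leq p$) to be concentrated in degree zero, which is precisely the pre-$p$-Du Bois condition. The refinement (2b) requires additionally identifying $\mathcal{H}^0\underline{\Omega}_\Sigma^k$ with $\Omega_\Sigma^{[k]}$; I would chase the diagram \eqref{omega} after restricting along the sequences \eqref{filt1} and \eqref{filt2} and using the splitting \eqref{another}, which localizes the discrepancy into the Hodge groups $H^j(X,\mathcal{O}_X)$ for $1 \leq j \leq k$. This produces the equivalence $p$-Du Bois $\Leftrightarrow$ $p \leq \nu(X)$ in the stated range, where the codimension bound $p \leq \lfloor n/2 \rfloor$ is what makes the residual zeroth-level comparison well-behaved.

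For part (3), pre-$1$-rationality requires, on top of pre-$1$-Du Bois, the vanishing $R^it_*\Omega^k_{\mathbb{P}(\mathcal{E}_L)}(\log\Phi)=0$ for $i\geq 1$ and $k=0,1$, together with the corresponding identification of $\mathcal{H}^0$. The $k=0$ case reproduces (1d), forcing $H^i(\mathcal{O}_X)=0$ for all $i>0$. The $k=1$ case, via the $\Omega^1$-analogue of the preceding chase on the restricted diagram \eqref{omega}, further forces the Hodge numbers of $X$ to match those of $\mathbb{P}^1$. The main obstacle is this last classification step: starting from these stringent cohomological constraints and the positivity packaged in $(Q_1)$ (in particular, the ampleness of $b_x^*L(-2E_x)$ on $F_x$ from \definitionref{qp}(2)), one must exclude every smooth projective $X$ of dimension $n \geq 2$, for instance by using the ampleness to derive an intersection-theoretic contradiction on $F_x = \mathrm{Bl}_xX$ when $n \geq 2$, thereby concluding $X \cong \mathbb{P}^1$.
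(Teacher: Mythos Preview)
The paper does not prove \theoremref{previous}; it is stated as a collection of results imported from \cite{Ull16}, \cite{CS18}, and \cite{ORS}, with no argument given here. So there is no ``paper's own proof'' to compare against, and your sketch is really an attempt to reconstruct the arguments of those three references.

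That said, your proposal contains a genuine error in its foundation. The ``unifying principle''
\[
\underline{\Omega}_\Sigma^k \simeq \mathbf{R}t_*\Omega^k_{\mathbb{P}(\mathcal{E}_L)}(\log\Phi)(-\Phi)
\]
is false as stated: for a strong log resolution one has only the Steenbrink triangle \eqref{ste}, which carries an extra term $\underline{\Omega}_X^k$. This matters already at $k=0$, where your argument for Du~Bois in (1a) would yield $\underline{\Omega}_\Sigma^0 \simeq t_*\mathcal{O}_{\mathbb{P}(\mathcal{E}_L)}(-\Phi)$, and the latter is the ideal sheaf $\mathcal{I}_{X/\Sigma}$, not $\mathcal{O}_\Sigma$. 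The correct route (as in \cite{CS18}) uses the full triangle \eqref{ste} together with the identification $t_*\mathcal{O}_{\mathbb{P}(\mathcal{E}_L)}(-\Phi) \cong \mathcal{I}_{X/\Sigma}$ to recover $\mathcal{O}_\Sigma$. Similarly, (1b) does follow from \theoremref{depth}, but note that equating $\mathrm{depth}=n+2+\nu(X)$ with $\dim\Sigma=2n+1$ gives $\nu(X)=n-1$, not $\nu(X)=0$; the paper's statement (1b) appears to contain a typo in this regard. Finally, your sketches for (2b) and (3) are not proofs but programs: the actual arguments in \cite{ORS} require the detailed computation of $R^iq_*\Omega_\Phi^j$ (cf.\ \lemmaref{ext} and, in this paper, \theoremref{mainfiltthm}), and the classification step forcing $X\cong\mathbb{P}^1$ in (3) is substantially more delicate than an ``intersection-theoretic contradiction on $F_x$''; it hinges on showing that pre-$1$-rationality forces specific higher direct images to vanish, which in turn pins down the Hodge numbers of $X$.
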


For the definitions of higher Du Bois and higher rational singularities appearing in the statement above, we refer to \cite{SVV} or \cite{ORS}.

\subsection{Secant varieties with special singularities} In this section, we prove \theoremref{mainquot} and \theoremref{maingor}. We start with the first result.

\begin{proof}[Proof of \theoremref{mainquot}] 
First, assume $\Sigma$ has quotient singularities. By \cite[Proposition 4.2(2)]{SVV} the singularities of $\Sigma$ are pre-$1$-rational, whence by \theoremref{previous} (3), we conclude $X\cong\mathbb{P}^1$. To see the converse, we recall a special case of a result from \cite[Proposition 3.5]{Mitch} which says that when $(X,L)\cong (\mathbb{P}^1,\mathcal{O}_{\mathbb{P}^1}(d))$ with $d\geq 3$, $\Sigma$ can be covered by varieties of the form $\mathbb{C}\times \widehat{T}_{\Delta}$ where $\widehat{T}_{\Delta}$ is a normal toric variety of dimension 2. Since 2-dimensional toric varieties are simplicial, it follows that $\Sigma$ in this case has quotient singularities.
\end{proof}
We now aim to prove \theoremref{maingor}. We will use some basic facts from adjunction theory (for a comprehensive literature on this topic, see \cite{BS}). In particular, for the very ample line bundle $A$ on $X$, we use the {\it nef value} of $A$ which is defined as $$\tau(A):=\min \left\{s\in\mathbb{R}\mid K_X+sA\textrm{ is nef}\right\}.$$ We first need an auxiliary 
\begin{proposition}\label{propgor}
Assume $n\geq 2$ and $L$ is 3-very ample. Further assume $\Sigma$ is normal and $\mathbb{Q}$-Gorenstein. Then $$K_X+\left(\frac{n-1}{2}\right)L=_{\mathbb{Q}}0.$$  
\end{proposition}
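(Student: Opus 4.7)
The plan is to pull back the canonical divisor of $\Sigma$ along the strong log resolution $t\colon \mathbb{P}(\mathcal{E}_L)\to\Sigma$ from \eqref{ulldiag}, restrict everything to a fiber $F_x=t^{-1}(x)$ over a point $x\in X$, and then compare coefficients in the split Picard group of the blow-up $F_x\cong\mathrm{Bl}_xX$. Since $\Sigma$ is assumed $\mathbb{Q}$-Gorenstein, we may write
\begin{equation*}
K_{\mathbb{P}(\mathcal{E}_L)} \;=_{\mathbb{Q}}\; t^{\ast}K_{\Sigma}+a\,\Phi
\end{equation*}
for a unique rational number $a$, because $\Phi$ is the only $t$-exceptional divisor.

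The key computation is to evaluate both sides after restricting to $F_x$. On the left, adjunction $K_{\Phi}\cong(K_{\mathbb{P}(\mathcal{E}_L)}+\Phi)|_{\Phi}$ combined with \eqref{kphi} and \eqref{further} (which gives $\Phi|_{F_x}=-b_x^{\ast}L+2E_x$ in additive notation) yields
\begin{equation*}
K_{\mathbb{P}(\mathcal{E}_L)}|_{F_x} \;=\; K_{F_x}+b_x^{\ast}L-2E_x \;=\; b_x^{\ast}(K_X+L)+(n-3)E_x,
\end{equation*}
using the standard formula $K_{F_x}=b_x^{\ast}K_X+(n-1)E_x$ for the blow-up of a smooth point. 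On the right, the map $t$ contracts the entire fiber $F_x$ to the single point $x\in\Sigma$, so the restriction $t^{\ast}K_{\Sigma}|_{F_x}$ is the pullback of a $\mathbb{Q}$-line bundle by a constant map, hence $\mathbb{Q}$-linearly trivial on $F_x$. Combining these facts gives the identity
\begin{equation*}
b_x^{\ast}(K_X+L)+(n-3)E_x \;=_{\mathbb{Q}}\; a\bigl(-b_x^{\ast}L+2E_x\bigr)
\end{equation*}
in $\mathrm{Pic}(F_x)\otimes\mathbb{Q}$.

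To finish, one uses that $\mathrm{Pic}(F_x)\otimes\mathbb{Q}\cong (b_x^{\ast}\mathrm{Pic}(X)\otimes\mathbb{Q})\oplus\mathbb{Q}\cdot E_x$ and that $b_x^{\ast}$ is injective on rational Picard groups, so the identity above splits into two: the coefficient of $E_x$ forces $a=\tfrac{n-3}{2}$, and the ``horizontal'' component forces $K_X+(1+a)L=_{\mathbb{Q}}0$ in $\mathrm{Pic}(X)\otimes\mathbb{Q}$, which is exactly $K_X+\tfrac{n-1}{2}L=_{\mathbb{Q}}0$. The only step requiring any care is the triviality of $t^{\ast}K_{\Sigma}|_{F_x}$: because $\Sigma$ is merely $\mathbb{Q}$-Gorenstein one should clear denominators, observe that for a Cartier multiple $mK_\Sigma$ the pullback $t^{\ast}(mK_\Sigma)$ is a genuine line bundle trivial on $F_x$ (since $F_x\to\{x\}$ is constant), and then divide by $m$ in $\mathrm{Pic}(F_x)\otimes\mathbb{Q}$; this is the main thing to be careful about.
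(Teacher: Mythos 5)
Your proposal is correct and follows essentially the same route as the paper: write $K_{\mathbb{P}(\mathcal{E}_L)}=_{\mathbb{Q}}t^{*}K_{\Sigma}+a\Phi$, restrict to $F_x$, use adjunction together with \eqref{kphi} and \eqref{further} plus $K_{F_x}=b_x^{*}K_X+(n-1)E_x$, note that $t^{*}K_{\Sigma}|_{F_x}=_{\mathbb{Q}}0$ since $F_x$ is contracted, and compare coefficients in $\operatorname{Pic}(F_x)\otimes\mathbb{Q}=b_x^{*}\operatorname{Pic}(X)\otimes\mathbb{Q}\oplus\mathbb{Q}E_x$ to force both sides of $b_x^{*}(K_X+(a+1)L)=_{\mathbb{Q}}(2a+3-n)E_x$ to vanish. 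The only cosmetic difference is that you isolate $K_{\mathbb{P}(\mathcal{E}_L)}|_{F_x}$ before comparing, whereas the paper isolates $(a+1)\Phi|_{F_x}$; the content is identical.
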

\begin{proof}
Since $K_{\Sigma}$ is $\mathbb{Q}$-Cartier by assumption, from \eqref{ulldiag} we conclude that 
\begin{equation}\label{k1}
    t^*K_{\Sigma}|_{F_x}=_{\mathbb{Q}}0.
\end{equation}
Now, $(K_{\mathbb{P}(\mathcal{E}_L)}+\Phi)|_{\Phi}=K_{\Phi}$ by adjunction, whence by \eqref{kphi} we obtain 
\begin{equation}\label{k2}
    (K_{\mathbb{P}(\mathcal{E}_L)}+\Phi)|_{F_x}=K_{F_x}.
\end{equation}
To this end, set $a:=a(\Phi;\Sigma,0)$ to be the discrepancy of the $t$-exceptional divisor $\Phi$. Then by definition, we have 
\begin{equation}\label{k3}
    K_{\mathbb{P}(\mathcal{E}_L)}=_{\mathbb{Q}}t^*K_{\Sigma}+a\Phi.
\end{equation}
Combining \eqref{k1}, \eqref{k2} and \eqref{k3} with the fact $K_{F_x}=b_x^*K_X+(n-1)E_x$,  we obtain 
$$(a+1)\Phi|_{F_x}=_\mathbb{Q}b_x^*K_X+(n-1)E_x,$$ which through \eqref{further} simplifies to 
$$b_x^*(K_X+(a+1)L)=_{\mathbb{Q}}(2a+3-n)E_x.$$
Since $n\geq 2$, the above is possible only when both sides are zero, which proves the assertion.
\end{proof}

\begin{proof}[Proof of \theoremref{maingor}] 
Assume $\Sigma$ is $\mathbb{Q}$-Gorenstein. If $n=1$, then $g\leq 1$ by \cite[Remark 5.7]{ENP} whence we assume $n\geq 2$. Recall that $L=K_X+(2n+2)A+B$ by assumption (which is 3-very ample, see for e.g. \cite[Proof of Corollary C]{Ull16}), whence by \propositionref{propgor}, we deduce that 
\begin{equation}\label{k4}
    K_X+(2n-2)A+\frac{n-1}{n+1}B=_{\mathbb{Q}} 0.
\end{equation}
Consequently $\tau(A)\geq 2n-2$, and equality holds only if $B=_{\mathbb{Q}}0$. But it is well-known (and easy to see using Castelnuovo-Mumford regularity) that $\tau(A)\leq n+1$, which gives us $$2n-2\leq\tau(A)\leq n+1\implies n\leq 3.$$
When $n=3$, $\tau(A)=4$ whence $(X,L)\cong (\mathbb{P}^3,\mathcal{O}_{\mathbb{P}^3}(4))$ by \cite[Theorem 7.2.1]{BS} and \propositionref{propgor}. When $n=2$, $2\leq \tau(A)\leq 3$, and note that $K_X+2A$ is not nef and big by \eqref{k4}. Thus, again by \propositionref{propgor}, we see that $X$ is Del Pezzo and $L=_{\mathbb{Q}}-2K_X$. Since numerical equivalence coincides linear equivalence for Fano varieties, we deduce that in fact $L=-2K_X$ and by \cite[Proposition 7.2.2]{BS} one of the following holds:
\begin{itemize}
    \item $(X,L)\cong (\mathbb{P}^2,\mathcal{O}_{\mathbb{P}^2}(6))$,
    \item $(X,L)\cong (\mathbb{P}^1\times\mathbb{P}^1,\mathcal{O}_{\mathbb{P}^1\times\mathbb{P}^1}(4,4))$,
    \item $X\ncong \mathbb{P}^1\times\mathbb{P}^1$, and $X\cong \mathbb{P}(\mathcal{F})$ for a rank 2 bundle $\mathcal{F}$ over a smooth curve $C$.
\end{itemize}
In the third case, we get $0=h^1(\mathcal{O}_X)=h^1(\mathcal{O}_C)$ (the first equality follows from the fact $X$ is Del Pezzo), whence $C\cong\mathbb{P}^1$ and $X$ is a Hirzebruch surface $\mathbb{F}_e$, $e\neq 0$. It follows that $e=1$ (since $X$ is both Del Pezzo and Hirzebruch, and since $X\ncong \mathbb{F}_0$). Recall that $\textrm{Pic}(\mathbb{F}_1)=\mathbb{Z}[C_0]+\mathbb{Z}[F]$ with $[C_0]$ being the class of a section of the structure morphism $\mathbb{F}_1\to\mathbb{P}^1$ with $C_0^2=-1$, and $[F]$ being that of a fiber with $F^2=0$. Also $K_X=-2C_0-3F$. But in this case, $L-K_X=6C_0+9F=6A+B$ which is a contradiction since $rC_0+sF$ is ample (resp. nef) if and only if $r\geq 1, s\geq r+1$ (resp. $r\geq 0, b\geq r$). The converse of (1), and entire (2) now follow from \cite[Theorem 4.4 (G9), (G10) and Theorem 4.6 (Q2), (Q4), (Q5)]{Mitch} (thanks also to the fact that the secant variety $\Sigma$ of an elliptic normal curve of degree $\geq 5$ is Gorenstein by \cite[Remark 5.6]{ENP}).
\end{proof}

\begin{remark}
There are other pairs $(X,L)$, not included in the list given in \theoremref{maingor}, with $L$ 3-very ample but less positive than the requirement of \theoremref{maingor}, whose secant varieties are $(\mathbb{Q}-)$ Gorenstein. For example, according to \cite[Theorem 4.4 (G9)]{Mitch}, the third Veronese embedding of $\mathbb{P}^5$ i.e., $\mathbb{P}^5\hookrightarrow\mathbb{P}^{55}$ embedded by $|\mathcal{O}_{\mathbb{P}^5}(3)|$, has Gorenstein secant variety $\Sigma$ (but the pair $(\mathbb{P}^5,\mathcal{O}_{\mathbb{P}^5}(3))$ of course satisfies the constraint imposed by \propositionref{propgor}).
\end{remark}

\section{Local freeness and ranks of \texorpdfstring{$R^iq_*\Omega_{\Phi}^j$}{TEXT}} \label{secheart}

We continue working with the notation and hypothesis of the previous section. In particular, $X$ is a smooth projective variety of dimension $n$ and $L$ is a 3-very ample line bundle on $X$. 

\subsection{Statement of the key result} In \cite[Lemma 2.2]{CS18}, it was proven that 
\begin{equation}\label{cs}
    R^jq_*\mathcal{O}_{\Phi} \cong H^j(X,\mathcal{O}_X)\otimes \mathcal{O}_X.
\end{equation} 
We also recall:
\begin{lemma}[{\cite[Lemma 2.19]{ORS}}]\label{ext}
    The following statements hold:
    \begin{enumerate}
        \item If $n=1$, then we have the following isomorphisms for all $j$:
        $$R^jq_*\Omega_{\Phi}^1\cong \left[H^j(\mathcal{O}_X)\otimes \Omega_X^1\right]\oplus \left[H^j(\Omega_X^1)\otimes\mathcal{O}_X\right].$$
        \item Assume $n\geq 2$. Then:
        \begin{itemize}
            \item[(i)] $R^jq_*\Omega_{\Phi}^1\cong \left[H^j(\mathcal{O}_X)\otimes \Omega_X^1\right]\oplus \left[H^j(\Omega_X^1)\otimes\mathcal{O}_X\right]$ for all $j\neq 1$;
            \item[(ii)] We have an exact sequence $$0\to \left[H^1(\mathcal{O}_X)\otimes \Omega_X^1\right]\oplus \left[H^1(\Omega_X^1)\otimes \mathcal{O}_X\right]\to R^1q_*\Omega_{\Phi}^1\to\mathcal{O}_X\to 0.$$
        \end{itemize}
    \end{enumerate}
\end{lemma}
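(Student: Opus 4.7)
The plan is to use the factorization $q=p_1\circ b_\Delta$ from diagram \eqref{diag1} and compute $Rq_*\Omega^1_\Phi$ in two stages, first pushing by $b_\Delta$ and then by $p_1$. For the case $n=1$, the diagonal $\Delta\subset X\times X$ is a Cartier divisor so $b_\Delta$ is an isomorphism and $\Phi\cong X\times X$; under this identification $\Omega^1_\Phi\cong p_1^*\Omega^1_X\oplus p_2^*\Omega^1_X$, and applying the projection formula to $p_1$ together with Künneth yields statement (1) at once.

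For $n\geq 2$, I would first compute $Rb_{\Delta*}\Omega^1_\Phi$ using the short exact sequence attached to the blow-up of the smooth subvariety $\Delta\subset X\times X$:
\[
0\to b_\Delta^*\Omega^1_{X\times X}\to\Omega^1_\Phi\to j'_{\Delta*}\Omega^1_{E_\Delta/\Delta}\to 0.
\]
Pushing by $Rb_{\Delta*}$, the first term gives $\Omega^1_{X\times X}$ via the projection formula combined with $Rb_{\Delta*}\mathcal{O}_\Phi=\mathcal{O}_{X\times X}$, while the third term gives $j_{\Delta*}\mathcal{O}_\Delta[-1]$ via the identity $Rq_{\Delta*}\Omega^1_{E_\Delta/\Delta}=\mathcal{O}_\Delta[-1]$ for the $\mathbb{P}^{n-1}$-bundle $q_\Delta\colon E_\Delta\to\Delta$ appearing in \eqref{diag3}. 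Reading off the long exact sequence of $Rb_{\Delta*}$, one finds $R^0b_{\Delta*}\Omega^1_\Phi=\Omega^1_{X\times X}$, $R^1b_{\Delta*}\Omega^1_\Phi=j_{\Delta*}\mathcal{O}_\Delta$, and higher direct images vanish.

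Next I would apply $Rp_{1*}$ via the Leray spectral sequence
\[
E_2^{p,q}=R^pp_{1*}R^qb_{\Delta*}\Omega^1_\Phi\;\Longrightarrow\;R^{p+q}q_*\Omega^1_\Phi.
\]
Only two rows are nonzero: by Künneth, $E_2^{p,0}=R^pp_{1*}\Omega^1_{X\times X}\cong[H^p(\mathcal{O}_X)\otimes\Omega^1_X]\oplus[H^p(\Omega^1_X)\otimes\mathcal{O}_X]$, while $E_2^{p,1}=R^p(p_0)_*\mathcal{O}_\Delta$ equals $\mathcal{O}_X$ for $p=0$ (since $p_0\colon\Delta\to X$ is an isomorphism) and vanishes for $p\geq 1$. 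The only differential that could be nonzero is $d_2\colon E_2^{0,1}=\mathcal{O}_X\to E_2^{2,0}$. Granted $d_2=0$, the spectral sequence degenerates at $E_2$ and reads off exactly the claimed isomorphisms for $j\neq 1$, together with the short exact sequence $0\to E_2^{1,0}\to R^1q_*\Omega^1_\Phi\to\mathcal{O}_X\to 0$ for $j=1$.

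The main obstacle is the vanishing of $d_2$. I would verify this by specializing to a fiber: by cohomology and base change (valid since the fibers $F_{x_0}\cong\mathrm{Bl}_{x_0}X$ all share the same Hodge numbers), the fiber of $d_2$ at $x_0\in X$ is the connecting homomorphism in the long exact sequence obtained by restricting the blow-up sequence above to $F_{x_0}$. The Hodge-theoretic blow-up formula $h^{1,j}(\mathrm{Bl}_{x_0}X)=h^{1,j}(X)+\delta_{j,1}$ (valid for $n\geq 2$) forces the fiberwise boundary $\mathbb{C}=H^1(E_{x_0},\Omega^1_{E_{x_0}})\to H^2(F_{x_0},b_{x_0}^*\Omega^1_X)\cong H^2(X,\Omega^1_X)$ to vanish on dimensional grounds, and $\mathcal{O}_X$-linearity then lifts this to the global statement $d_2=0$. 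A more conceptual alternative would be to show directly that the triangle $\Omega^1_{X\times X}\to Rb_{\Delta*}\Omega^1_\Phi\to j_{\Delta*}\mathcal{O}_\Delta[-1]$ splits in $D^b(X\times X)$, whose obstruction class lives in $\mathrm{Ext}^2(j_{\Delta*}\mathcal{O}_\Delta,\Omega^1_{X\times X})$ and can be computed via the Koszul resolution of $\mathcal{O}_\Delta$ and the local-to-global Ext spectral sequence.
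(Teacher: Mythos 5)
A preliminary remark: the paper does not actually prove this lemma --- it is imported verbatim from \cite[Lemma 2.19]{ORS} --- so there is no internal proof to compare against, and I will assess your argument on its own terms. Your architecture is sound. For $n=1$ the diagonal is a divisor, the blow-up is an isomorphism, and part (1) follows at once from the K\"unneth/projection-formula computation you give. For $n\geq 2$, the computation that $Rb_{\Delta*}\Omega^1_\Phi$ has $\mathcal{H}^0=\Omega^1_{X\times X}$, $\mathcal{H}^1=j_{\Delta*}\mathcal{O}_\Delta$ and nothing else is correct, the two-row Leray spectral sequence is set up correctly, and the whole statement does reduce to the vanishing of $d_2\colon E_2^{0,1}=\mathcal{O}_X\to E_2^{2,0}$ (note that this vanishing is needed not only for the exact sequence at $j=1$ but also for the isomorphism at $j=2$, since $E_\infty^{2,0}=\operatorname{coker}(d_2)$).

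The gap is in your verification that $d_2=0$. First, the fibre of $d_2$ at $x_0$ is the connecting map $H^1(E_{x_0},\Omega^1_{E_{x_0}})\to H^2\bigl(F_{x_0},(b_\Delta^*\Omega^1_{X\times X})|_{F_{x_0}}\bigr)$, whose target is $H^2(X,\Omega^1_X)\oplus H^2(X,\mathcal{O}_X)^{\oplus n}$ and not just $H^2(X,\Omega^1_X)$: you have silently discarded the summand $(b_\Delta^*p_1^*\Omega^1_X)|_{F_{x_0}}\cong\mathcal{O}_{F_{x_0}}^{\oplus n}$ (the subbundle appearing in \eqref{filt1}), and the component of the boundary landing in $H^2(\mathcal{O}_X)^{\oplus n}$ is not controlled by the blow-up formula for $h^{1,j}$. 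Second, even for the component you do treat, the middle term of the relevant long exact sequence is $H^*(F_{x_0},\Omega^1_\Phi|_{F_{x_0}})$ rather than $H^*(F_{x_0},\Omega^1_{F_{x_0}})$, so ``dimensional grounds'' via $h^{1,j}(\mathrm{Bl}_{x_0}X)=h^{1,j}(X)+\delta_{j,1}$ does not close the bookkeeping without already knowing the ranks you are trying to compute. The standard repair is to produce an explicit lift rather than count dimensions: the class $c_1(\mathcal{O}_\Phi(E_\Delta))\in H^1(\Phi,\Omega^1_\Phi)$ restricts, on each fibre, to a class of $H^1(F_{x_0},\Omega^1_\Phi|_{F_{x_0}})$ whose image in $H^1(E_{x_0},\Omega^1_{E_{x_0}})\cong\mathbb{C}$ is $c_1(\mathcal{O}_\Phi(E_\Delta)|_{E_{x_0}})=c_1(\mathcal{O}_{\mathbb{P}^{n-1}}(-1))\neq 0$; hence every fibrewise connecting map vanishes. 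Equivalently, the edge map $R^1q_*\Omega^1_\Phi\to E_2^{0,1}=\mathcal{O}_X$, whose image is $\ker d_2$, hits a nowhere-vanishing section, forcing $\ker d_2=\mathcal{O}_X$. Your ``conceptual alternative'' via the obstruction class in $\operatorname{Ext}^2(j_{\Delta*}\mathcal{O}_\Delta,\Omega^1_{X\times X})$ does work for free when $n\geq 3$, since that group vanishes for codimension reasons, but for $n=2$ it is nonzero in general and you would again need the explicit computation above.
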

Notice that \eqref{cs} and \lemmaref{ext} show the local freeness of $R^iq_*\Omega_{\Phi}^j$ for $j=0,1$ and compute their ranks. 

\smallskip

We aim to extend this to all $j$. To do so, for $j\geq 1$, we set $\mathcal{K}_j$ to be the kernel of the natural surjective map $\Omega_{\Phi}^j|_{F_x}\to \Omega^j_{E_{\Delta}}|_{E_x}$, i.e., it fits into the following exact sequence
$$0\to \mathcal{K}_j\to \Omega_{\Phi}^j|_{F_x}\to \Omega^j_{E_{\Delta}}|_{E_x}\to 0.$$ Next, we construct a filtration $(\mathcal{K}_j, {}_jK^{\bullet})$ on $\mathcal{K}_j$. The main result of this section shows the local freeness of $R^iq_*\Omega_{\Phi}^j$ for all $j$, and inductively computes its rank which we summarize as follows:

\begin{theorem}\label{mainfiltthm}
For any $i,j,k$ and $x\in X$, the following formulae hold:
\begin{equation}\label{recursion}
\arraycolsep=1.4pt\def\arraystretch{2.2}
\begin{array}{c}
h^i(\Omega^j_{\Phi}|_{F_x})=\displaystyle\binom{n}{j-i}h^{i,i}(E_x)+h^i({}_jK^0)\\
h^i({}_jK^k)=h^i({}_jK^{k+1})+\displaystyle\binom{n}{k}\left(h^{i,j-k}(F_x)-h^{i,j-k}(E_x)\right).
\end{array}
\end{equation}
In particular, for any $0\leq i\leq n$, $0\leq j\leq 2n$ and $x\in X$ the following statements hold:
\begin{itemize}
    \item[(1)] The sheaves $R^{n-i}q_*\Omega_{\Phi}^j$ are locally free with ranks given by the following formula $$\mathrm{rank}(R^{n-i}q_*\Omega_{\Phi}^j)=h^{n-i}(\Omega_{\Phi}^j|_{F_x})=\begin{cases}
        \displaystyle\binom{n}{j+i-n}h^{n-i,n-i}(E_x)+\displaystyle\sum\limits_{k=0}^j\displaystyle\binom{n}{k}h^{n-i,j-k}(X) & \textrm{ if $i\neq n$};\\
        \displaystyle\sum\limits_{k=0}^j\displaystyle\binom{n}{k}h^{0,j-k}(X) & \textrm{ if $i=n$.}
    \end{cases}$$ 
    \item[(2)] The natural maps $R^{n-i}q_*\Omega_{\Phi}^j\otimes\mathbb{C}(x)\to H^{n-i}(\Omega_{\Phi}^j|_{F_x})$ are isomorphisms.
\end{itemize}
\end{theorem}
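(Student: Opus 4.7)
The plan is to establish the recursion \eqref{recursion} using a suitable filtration on $\mathcal{K}_j$ built from the conormal sequence \eqref{filt1}, and then to deduce (1) and (2) via cohomology and base change. I would induct on $j$, using \eqref{cs} as the base case $j=0$ and checking the $j=1$ case against \lemmaref{ext} as a sanity check.

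Since $\mathcal{N}_{F_x/\Phi}^*\cong\mathcal{O}_{F_x}^{\oplus n}$, the sheaf $\Omega_\Phi^j|_{F_x}$ carries a canonical decreasing filtration ${}_jF^\bullet$ whose $k$-th graded piece is $\wedge^k\mathcal{N}_{F_x/\Phi}^*\otimes\Omega_{F_x}^{j-k}\cong\mathcal{O}_{F_x}^{\oplus\binom{n}{k}}\otimes\Omega_{F_x}^{j-k}$. The splitting \eqref{another} likewise induces a filtration ${}_jG^\bullet$ on $\Omega_{E_\Delta}^j|_{E_x}$ with graded pieces $\mathcal{O}_{E_x}^{\oplus\binom{n}{k}}\otimes\Omega_{E_x}^{j-k}$. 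The surjection $\Omega_\Phi^j|_{F_x}\twoheadrightarrow\Omega_{E_\Delta}^j|_{E_x}$ respects these filtrations, so I would set ${}_jK^k:=\mathcal{K}_j\cap{}_jF^k$, whose graded pieces $\gr^k({}_jK^\bullet)$ are the kernels of the induced maps $\mathcal{O}_{F_x}^{\oplus\binom{n}{k}}\otimes\Omega_{F_x}^{j-k}\to\mathcal{O}_{E_x}^{\oplus\binom{n}{k}}\otimes\Omega_{E_x}^{j-k}$. The cohomology of these pieces is computed from the restriction sequence $0\to\Omega_{F_x}^{j-k}(-E_x)\to\Omega_{F_x}^{j-k}\to\Omega_{F_x}^{j-k}|_{E_x}\to 0$ combined with the two-term conormal filtration on $\Omega_{F_x}^{j-k}|_{E_x}$ and Bott vanishing on $\mathbb{P}^{n-1}$, yielding $h^i(\gr^k({}_jK^\bullet))=\binom{n}{k}\bigl(h^{i,j-k}(F_x)-h^{i,j-k}(E_x)\bigr)$.

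The first formula in \eqref{recursion} then follows by taking cohomology in $0\to\mathcal{K}_j\to\Omega_\Phi^j|_{F_x}\to\Omega_{E_\Delta}^j|_{E_x}\to 0$: since $H^i(\mathbb{P}^{n-1},\Omega_{\mathbb{P}^{n-1}}^{j-k})$ is nonzero only when $j-k=i$, the only contribution from $\Omega_{E_\Delta}^j|_{E_x}$ collapses to the single term $\binom{n}{j-i}h^{i,i}(E_x)$. The second formula comes from the long exact sequences of $0\to{}_jK^{k+1}\to{}_jK^k\to\gr^k\to 0$ together with the cohomology count of $\gr^k$ above. Telescoping the recursion and substituting the point blow-up formula (so that $h^{p,q}(F_x)$ exceeds $h^{p,q}(X)$ by $1$ exactly on the diagonal $1\le p=q\le n-1$) produces the closed form in (1); the split between $i\ne n$ and $i=n$ simply records the contribution of $h^{0,0}(E_x)$ relative to $h^{0,0}(X)$. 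Since the resulting expression is manifestly independent of $x\in X$ and $q:\Phi\to X$ is smooth and proper, Grauert's theorem (equivalently, cohomology and base change for the flat proper morphism $q$) then delivers local freeness of $R^{n-i}q_*\Omega_\Phi^j$ together with the base-change isomorphism in (2).

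The main technical obstacle will be the clean identification $h^i(\gr^k({}_jK^\bullet))=\binom{n}{k}(h^{i,j-k}(F_x)-h^{i,j-k}(E_x))$. The key point is that the normal-direction summand $\mathcal{O}_{E_x}(1)\otimes\Omega_{E_x}^{j-k-1}$ inside $\Omega_{F_x}^{j-k}|_{E_x}$ has vanishing higher cohomology by Bott, but its $H^0$ is nonzero when $j-k=1$ and must be tracked carefully so that it cancels appropriately in the long exact sequence. The boundary cases $i\in\{0,n\}$ or $j\in\{0,2n\}$ will likewise need separate checks, which is why the rank formula in (1) bifurcates at $i=n$.
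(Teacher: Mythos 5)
Your overall architecture coincides with the paper's: filter $\Omega_\Phi^j|_{F_x}$ and $\Omega_{E_\Delta}^j|_{E_x}$ by the conormal sequences \eqref{filt1} and \eqref{filt2}, pass to the induced filtration on the kernel $\mathcal{K}_j$, identify its graded pieces with $\bigl(\Omega_{F_x}^{j-k}(\log E_x)(-E_x)\bigr)^{\oplus\binom{n}{k}}$, and finish with Grauert. The genuine gap is that you treat the cohomological additivity as automatic. The two formulas in \eqref{recursion} assert exact equalities $h^i({}_jK^k)=h^i({}_jK^{k+1})+h^i(\mathrm{Gr}^k)$ and $h^i(\Omega_\Phi^j|_{F_x})=h^i(\mathcal{K}_j)+h^i(\Omega_{E_\Delta}^j|_{E_x})$, which hold only if every connecting homomorphism in the associated long exact sequences vanishes. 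This is not formal: $H^i(\mathrm{Gr}_{{}_jK}^k(\mathcal{K}_j))$ is nonzero in many degrees $i$ (its dimension is essentially $\binom{n}{k}h^{i,j-k}(X)$ away from the diagonal), so the long exact sequences could a priori fail to split into short ones. Establishing this degeneration is the bulk of the paper's argument (Lemmas \ref{filtlemma}, \ref{filtlemma2}, \ref{filtlemma3}): one must prove that $H^i({}_jG^k)\to H^i(\mathrm{Gr}_{{}_jG}^k(\Omega_\Phi^j|_{F_x}))$ and $H^i({}_jG^k)\to H^i({}_jJ^k)$ are surjective, and then run the snake lemma on the cohomology of the $3\times 3$ diagram \eqref{mainkfilt} to get \eqref{aux1}. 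The crucial geometric input, which your proposal never touches, is \claimref{random}: the restriction $H^i(\Omega_{\Phi}^{s})\to H^i(\Omega_{F_x}^{s})$ is surjective because the extra diagonal class $c_1(E_x)^i\in H^{i,i}(F_x)$ lifts to $c_1(E_\Delta)^i\in H^{i,i}(\Phi)$, using $E_x=E_\Delta\cap F_x$. The same issue reappears in your computation of $h^i(\mathrm{Gr}^k({}_jK^\bullet))$: the equality with $h^{i,j-k}(F_x)-h^{i,j-k}(E_x)$ needs $H^i(\Omega_{F_x}^{j-k})\to H^i(\Omega_{E_x}^{j-k})$ to be surjective, which again comes down to $c_1(E_x)^i$ restricting nontrivially (this is \eqref{aux3}).

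You do anticipate that some bookkeeping is required, but you locate the difficulty in the wrong place: the Bott-vanishing computation of the graded pieces is the easy part, while the vanishing of the connecting maps is where the theorem could actually fail and where the paper spends its effort. Your definition ${}_jK^k:=\mathcal{K}_j\cap{}_jF^k$ also silently requires the surjectivity of ${}_jG^{k+1}\to{}_jJ^{k+1}$ (the paper's \claimref{claim2}) in order to identify $\mathrm{Gr}^k({}_jK^\bullet)$ with the kernel of the graded map; without it the snake lemma contributes an extra cokernel term. Once these surjectivity statements are supplied, the rest of your outline (telescoping the recursion, the blow-up Hodge numbers, Grauert's theorem for (1) and (2)) goes through exactly as in the paper.
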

(In the above, we use the notation $h^{i,j}(Z):=h^j(\Omega_Z^i)$ for any smooth variety $Z$.)

We remark that it is not hard to verify $h^{n-i}(\Omega_{\Phi}^j|_{F_x})=h^i(\Omega_{\Phi}^{2n-j}|_{F_x})$ using the formula above. However, this equality also follows directly as 
$$h^{n-i}(\Omega_{\Phi}^j|_{F_x})=h^i(\omega_{F_x}\otimes (\Omega_{\Phi}^{j})^*|_{F_x})=h^i(\Omega_{\Phi}^{2n-j}|_{F_x})$$ where the first equality comes from duality, and the second one follows from \eqref{kphi} and the isomorphism $(\Omega_{\Phi}^{j})^*\cong \Omega_{\Phi}^{2n-j}\otimes \omega_{\Phi}^*$.

\subsection{Filtrations \texorpdfstring{$(\Omega_{\Phi}^j|_{F_x}, {}_jG^{\bullet})$}{TEXT}, \texorpdfstring{$(\Omega_{E_{\Delta}}^j|_{E_x}, {}_jJ^{\bullet})$}{TEXT} and \texorpdfstring{$(\mathcal{K}_j,{}_jK^{\bullet})$}{TEXT}} In this section, we construct decreasing filtrations ${}_jG^{\bullet}$, ${}_jJ^{\bullet}$ and ${}_jK^{\bullet}$ on the sheaves $\Omega_{\Phi}^j|_{F_x}$, $\Omega_{E_{\Delta}}^j|_{E_x}$ and $\mathcal{K}_j$ respectively, where the last one is a certain sheaf  whose construction will also be explained. We study the properties of these filtrations which will be used in the proofs 
of various results. We first record the following useful lemma.

\begin{lemma}\label{compatibility}
    Suppose we have the following exact sequence of locally free sheaves:
\begin{equation}
    0\to\mathcal{A}\to\mathcal{B}\to\mathcal{C}\to 0.
\end{equation}
Then, for any $q$, there is a filtration
$${}_qF^0=\wedge^q\mathcal{B}\supseteq {}_qF^1\supseteq \cdots\supseteq {}_qF^q\supseteq {}_qF^{q+1}=0$$
satisfying the following properties for all $p$:
\begin{enumerate}
    \item $\mathrm{Gr}_{{}_qF}^p(\wedge^q\mathcal{B})\cong \wedge^p\mathcal{A}\otimes\wedge^{q-p}\mathcal{C}$,
    \item the natural maps induce a commutative diagram: 
    \begin{equation}\label{filtdiag}
        \begin{tikzcd}
            0\arrow[r] & \wedge^p\mathcal{A}\otimes{}_{q-p}F^1\arrow[r]\arrow[d] & \wedge^p\mathcal{A}\otimes \wedge^{q-p}\mathcal{B}\arrow[r]\arrow[d] & \wedge^p\mathcal{A}\otimes \wedge^{q-p}\mathcal{C}\arrow[r]\arrow[d, equal] & 0\\
            0\arrow[r] & {}_qF^{p+1}\arrow[r] & {}_qF^p\arrow[r] & \mathrm{Gr}^p_{{}_qF}(\wedge^q\mathcal{B})\cong\wedge^p\mathcal{A}\otimes \wedge^{q-p}\mathcal{C}\arrow[r] & 0
        \end{tikzcd}
    \end{equation}
    and the vertical maps are surjective.
\end{enumerate}
In particular, for a given $p$, if the map $H^i(\wedge^p\mathcal{A}\otimes \wedge^{q-p}\mathcal{B})\to H^i(\wedge^p\mathcal{A}\otimes \wedge^{q-p}\mathcal{C})$ is surjective, then so is the map $H^i({}_qF^p)\to H^i(\wedge^p\mathcal{A}\otimes\wedge^{q-p}\mathcal{C})$.
\end{lemma}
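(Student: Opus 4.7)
The plan is to use the standard wedge-power (or Koszul) filtration canonically associated to a short exact sequence of locally free sheaves. Define
\[
{}_qF^p := \operatorname{image}\bigl(\wedge^p\mathcal{A}\otimes\wedge^{q-p}\mathcal{B}\longrightarrow\wedge^q\mathcal{B}\bigr),
\]
where the map is induced by the inclusion $\mathcal{A}\hookrightarrow\mathcal{B}$ together with exterior multiplication, with conventions ${}_qF^0=\wedge^q\mathcal{B}$ and ${}_qF^{q+1}=0$. The descending property ${}_qF^{p+1}\subseteq{}_qF^p$ is automatic, since the multiplication out of $\wedge^{p+1}\mathcal{A}\otimes\wedge^{q-p-1}\mathcal{B}$ factors through $\wedge^p\mathcal{A}\otimes\wedge^{q-p}\mathcal{B}$.

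For (1), I would argue locally. Zariski-locally the short exact sequence splits, and a choice of splitting $\mathcal{B}\cong\mathcal{A}\oplus\mathcal{C}$ yields a bigrading $\wedge^q\mathcal{B}=\bigoplus_{p'=0}^{q}\wedge^{p'}\mathcal{A}\otimes\wedge^{q-p'}\mathcal{C}$ with ${}_qF^p=\bigoplus_{p'\geq p}\wedge^{p'}\mathcal{A}\otimes\wedge^{q-p'}\mathcal{C}$, so that ${}_qF^p/{}_qF^{p+1}\cong\wedge^p\mathcal{A}\otimes\wedge^{q-p}\mathcal{C}$ locally. The main point to verify — and the only mildly delicate step — is that this identification is canonical: there is a globally defined map $\wedge^p\mathcal{A}\otimes\wedge^{q-p}\mathcal{C}\to{}_qF^p/{}_qF^{p+1}$ obtained by locally lifting sections of $\wedge^{q-p}\mathcal{C}$ to $\wedge^{q-p}\mathcal{B}$ and wedging with $\wedge^p\mathcal{A}$, since any two lifts differ by a section of $\mathcal{A}$, which after wedging lies in ${}_qF^{p+1}$. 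This globally defined map is a local isomorphism, hence a global one.

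For (2), I would apply (1) with $q$ replaced by $q-p$ to extract the short exact sequence $0\to{}_{q-p}F^1\to\wedge^{q-p}\mathcal{B}\to\wedge^{q-p}\mathcal{C}\to 0$ (the quotient being the $p'=0$ graded piece), and tensor with the flat sheaf $\wedge^p\mathcal{A}$ to obtain the top row of \eqref{filtdiag}. The middle vertical in \eqref{filtdiag} is the tautological surjection onto ${}_qF^p$, the right vertical is the canonical isomorphism from (1), and commutativity of the right square is clear from the construction. The induced map on kernels gives the left vertical, and surjectivity of that map follows from the snake lemma applied to the two rows. Finally, commutativity of \eqref{filtdiag} followed by taking sheaf cohomology factors the map $H^i(\wedge^p\mathcal{A}\otimes\wedge^{q-p}\mathcal{B})\to H^i(\wedge^p\mathcal{A}\otimes\wedge^{q-p}\mathcal{C})$ as $H^i(\wedge^p\mathcal{A}\otimes\wedge^{q-p}\mathcal{B})\to H^i({}_qF^p)\to H^i(\wedge^p\mathcal{A}\otimes\wedge^{q-p}\mathcal{C})$, whence surjectivity of the composition forces surjectivity of the second arrow — the claimed consequence. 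No step is a serious obstacle; the essential content is the naturality check in (1), and the rest is diagram bookkeeping.
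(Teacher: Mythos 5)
Your proposal is correct and follows essentially the same route as the paper: you use the same filtration ${}_qF^p=\operatorname{image}\bigl(\wedge^p\mathcal{A}\otimes\wedge^{q-p}\mathcal{B}\to\wedge^q\mathcal{B}\bigr)$, obtain the diagram \eqref{filtdiag} from the tautological surjection plus the snake lemma, and deduce the cohomological consequence by factoring the map on $H^i$. The only difference is cosmetic: the paper outsources part (1) to \cite[Chapter II, Exercise 5.16]{Har}, whereas you prove it directly via local splittings and a naturality check.
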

\begin{proof}
The filtration $(\wedge^q\mathcal{B},{}_qF^{\bullet})$ comes from \cite[Chapter II, Exercise 5.16]{Har} which satisfies (1). To see that it satisfies (2), recall the construction of this filtration: ${}_qF^p$ is the image of the composition $$\wedge^p\mathcal{A}\otimes\wedge^{q-p}\mathcal{B}\to \wedge^p\mathcal{B}\otimes\wedge^{q-p}\mathcal{B}\to\wedge^q\mathcal{B}$$ which gives us the surjective middle vertical map of the diagram \eqref{filtdiag}. It makes the right square of \eqref{filtdiag} commutative whence we get \eqref{filtdiag} with the surjective leftmost vertical map by snake lemma. This proves (2). To see the final assertion, consider the commutative diagram arising from \eqref{filtdiag}:
\begin{equation*}
    \begin{tikzcd}
        H^i(\wedge^p\mathcal{A}\otimes \wedge^{q-p}\mathcal{B})\arrow[r]\arrow[d] & H^i(\wedge^p\mathcal{A}\otimes \wedge^{q-p}\mathcal{C})\arrow[d, equal]\\
        H^i({}_qF^p)\arrow[r] & H^i(\wedge^p\mathcal{A}\otimes \wedge^{q-p}\mathcal{C})
    \end{tikzcd}
\end{equation*}
The top horizontal map is a surjection by assumption, whence so is the bottom horizontal map.
\end{proof}

Fix an integer $j\geq 1$ and $x\in X$. Let $(\Omega_{\Phi}^j|_{F_x}, {}_jG^{\bullet})$ and  $(\Omega_{E_{\Delta}}^j|_{E_x}, {}_jJ^{\bullet})$ be the filtrations coming from \lemmaref{compatibility} via \eqref{filt1} and \eqref{filt2} respectively. We now proceed to construct $(\mathcal{K}_j,{}_jK^{\bullet})$.

\smallskip

The compositions of the following surjections
\[
\begin{tikzcd}
\Omega_{\Phi}^1|_{F_x}\arrow[r, two heads] & \Omega_{\Phi}^1|_{E_x}\arrow[r, two heads] & \Omega_{E_{\Delta}}^1|_{E_x}
\end{tikzcd}
\]
and 
\[
\begin{tikzcd}
\Omega^1_{F_x}\arrow[r, two heads] & \Omega^1_{F_x}|_{E_x}\arrow[r, two heads] & \Omega^1_{E_x}
\end{tikzcd}
\]
make the two short exact sequences \eqref{filt1} and \eqref{filt2} fit into the following commutative diagram 
\begin{equation}\label{comp1}
    \begin{tikzcd}
    0\arrow[r] & \mathcal{O}_{F_x}^{\oplus n}\arrow[r]\arrow[d] & \Omega_{\Phi}^1|_{F_x}\arrow[r]\arrow[d, two heads] & \Omega_{F_x}^1\arrow[r]\arrow[d, two heads] & 0\\
    0\arrow[r] & \mathcal{O}_{E_x}^{\oplus n}\arrow[r] & \Omega^1_{E_{\Delta}}|_{E_x}\arrow[r] & \Omega_{E_x}^1\arrow[r] & 0
\end{tikzcd}
\end{equation}
where the middle and the right vertical surjective maps are described above.

\begin{claim}\label{claim1}
 The leftmost vertical arrow of \eqref{comp1} is just the coordinate-wise restriction, whence all three vertical maps of \eqref{comp1} are surjective.
\end{claim}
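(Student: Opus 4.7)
The plan is to chase the diagram \eqref{comp1} and identify the induced left vertical arrow by using functoriality of K\"ahler differentials together with the commutative diagram \eqref{diag3}. The main obstacle is purely bookkeeping: making sure the two trivializations $\mathcal{O}_{F_x}^{\oplus n}$ and $\mathcal{O}_{E_x}^{\oplus n}$ appearing in the two rows are compared under the natural identification $\Omega_{X,x}\cong \Omega_{\Delta,(x,x)}$ coming from the fact that $p_0:\Delta\to X$ is an isomorphism.

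First I would recall the origin of the two rows. The top row is the restriction to $F_x$ of the relative cotangent sequence for the smooth morphism $q:\Phi\to X$, with canonical trivialization $q^{\ast}\Omega_X^1|_{F_x}\cong \Omega_{X,x}\otimes_{\mathbb{C}}\mathcal{O}_{F_x}$, while the bottom row is the restriction to $E_x$ of the analogous sequence for the smooth morphism $q_\Delta:E_\Delta\to \Delta$, giving $q_\Delta^{\ast}\Omega_\Delta^1|_{E_x}\cong \Omega_{\Delta,(x,x)}\otimes_{\mathbb{C}}\mathcal{O}_{E_x}$. The right square of \eqref{comp1} commutes by functoriality: both composites $\Omega_\Phi^1|_{F_x}\to \Omega_{E_x}^1$ realize the same natural map obtained by restricting one-forms along $E_x\subset \Phi$ and projecting onto the intrinsic differentials of $E_x$. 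By the snake lemma this uniquely determines the left vertical arrow, and the remaining task is to identify it.

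The key input is \eqref{diag3}, which gives $q\circ j_\Delta' = p_0\circ q_\Delta$, hence a canonical isomorphism $(j_\Delta')^{\ast}q^{\ast}\Omega_X^1 \cong q_\Delta^{\ast}p_0^{\ast}\Omega_X^1 \cong q_\Delta^{\ast}\Omega_\Delta^1$ (the last step using that $p_0$ is an isomorphism of smooth varieties). By functoriality of the universal derivation, the natural map $q^{\ast}\Omega_X^1\to \Omega_\Phi^1$, restricted to $E_\Delta$ and composed with the quotient $\Omega_\Phi^1|_{E_\Delta}\to \Omega_{E_\Delta}^1$, is carried under this isomorphism to the structural inclusion $q_\Delta^{\ast}\Omega_\Delta^1\hookrightarrow \Omega_{E_\Delta}^1$. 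Restricting further from $E_\Delta$ to $E_x$ and passing to the two trivializations, the induced left vertical arrow in \eqref{comp1} becomes $\mathrm{id}_{\Omega_{X,x}}$ (via the canonical identification $\Omega_{X,x}\cong \Omega_{\Delta,(x,x)}$) tensored with the scalar restriction $\mathcal{O}_{F_x}\to \mathcal{O}_{E_x}$. This is precisely the coordinate-wise restriction $\mathcal{O}_{F_x}^{\oplus n}\to \mathcal{O}_{E_x}^{\oplus n}$.

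Surjectivity of all three vertical maps is then immediate. Coordinate-wise restriction is plainly surjective; the middle and right vertical maps are, by construction, compositions of the restriction $(-)|_{F_x}\to (-)|_{E_x}$ with a conormal-type quotient, and hence surjective as well. I expect no substantive difficulty beyond carefully tracking the natural transformation between the two cotangent sequences through the closed embedding $E_\Delta\hookrightarrow \Phi$; once \eqref{diag3} is invoked, functoriality closes the argument with no further calculation.
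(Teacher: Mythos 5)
Your argument is correct and follows essentially the same route as the paper: both identify the two trivial bundles canonically with $\Omega_{X,x}\otimes\mathcal{O}$ (the paper by factoring $q|_{F_x}$ through the point $\{x\}$ via \eqref{diag2} and \eqref{diag4}, you by comparing the pullbacks over $E_\Delta$ using $p_0$ and \eqref{diag3}), and then observe that the induced left vertical arrow is the identity on the fiber tensored with the restriction $\mathcal{O}_{F_x}\to\mathcal{O}_{E_x}$, hence coordinate-wise restriction and surjective. The bookkeeping differs only cosmetically, so there is nothing substantive to add.
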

\begin{proof}
    Recall that the leftmost vertical map is identified with 
    \begin{equation}\label{coord}
        q^*\Omega_{X}^1|_{F_x}\to q_{\Delta}^*\Omega_{\Delta}^1|_{E_x}.
    \end{equation} 
    Let $$q_{F_x}:F_x\xrightarrow{b_x}\left\{x\right\}\times X\to\left\{x\right\}$$ be the composition (see middle vertical column of \eqref{diag2}). By \eqref{diag2}, we obtain $q^*\Omega_X^1|_{F_x}\cong q_{F_x}^*\mathcal{O}_{\left\{x\right\}}^{\oplus n}$.
    To this end, consider the commutative diagram:
    \begin{equation}\label{diag4}
        \begin{tikzcd}
        \left\{(x,x)\right\}\arrow[r, hook]\arrow[d, hook] & \Delta \arrow{dr}{\cong}[swap]{p_0} & \\
        \left\{x\right\}\times X\arrow[r] & \left\{x\right\}\arrow[r, hook] & X 
        \end{tikzcd}
    \end{equation}
It follows from \eqref{diag2}, \eqref{diag3} and \eqref{diag4} that $q_{\Delta}^*\Omega_{\Delta}^1|_{E_x}\cong q_{F_x}^*\mathcal{O}_{\left\{x\right\}}^{\oplus n}|_{E_x}$, whence \eqref{coord} can identified with the natural map $$q_{F_x}^*\mathcal{O}_{\left\{x\right\}}^{\oplus n}\to q_{F_x}^*\mathcal{O}_{\left\{x\right\}}^{\oplus n}|_{E_x}.$$ Thus the
conclusions follow.
\end{proof}

Similarly, for any $s$ we have the composition of surjections 
\begin{equation}\label{surj1}
\begin{tikzcd}
\Omega_{\Phi}^s|_{F_x}\arrow[r, two heads] & \Omega_{\Phi}^s|_{E_x}\arrow[r, two heads] & \Omega_{E_{\Delta}}^s|_{E_x},
\end{tikzcd}
\end{equation}
\begin{equation}\label{surj2}
    \begin{tikzcd}
\Omega^s_{F_x}\arrow[r, two heads] & \Omega^s_{F_x}|_{E_x}\arrow[r, two heads] & \Omega^s_{E_x}
\end{tikzcd}
\end{equation}
which are evidently induced by \eqref{comp1}. Now, \eqref{surj2} induces the following surjections via coordinate-wise maps for any $k$:
\begin{equation}\label{surj3}
    \begin{tikzcd}
        \text{Gr}_{{}_jG}^k(\Omega_{\Phi}^j|_{F_x})\cong \wedge^k(\mathcal{O}_{F_x}^{\oplus n})\otimes \Omega_{F_x}^{j-k}\arrow[r, two heads] & \text{Gr}_{{}_jJ}^k(\Omega_{E_{\Delta}}^j|_{E_x})\cong \wedge^k(\mathcal{O}_{E_x}^{\oplus n})\otimes \Omega_{E_x}^{j-k}.
    \end{tikzcd}
\end{equation}
Observe that the surjections $\Omega_{\Phi}^s|_{F_x}\to\Omega_{F_x}^s$ and $\Omega^s_{E_{\Delta}}|_{E_x}\to \Omega_{E_x}^s$ arising from \eqref{filt1} and \eqref{filt2} respectively are compatible with \eqref{surj1} and \eqref{surj2} (see \eqref{comp1}), whence using \eqref{surj3}, we inductively define the maps 
\begin{equation}\label{surj4}
    {}_jG^s\to {}_jJ^s
\end{equation}
which make the following diagram commutative (thanks to \claimref{claim1}):
\begin{equation}\label{kfilt}
\begin{tikzcd}
    0\arrow[r] & {}_jG^{k+1}\arrow[r]\arrow[d] & {}_jG^k\arrow[r]\arrow[d] & \text{Gr}_{{}_jG}^k(\Omega_{\Phi}^j|_{F_x})\arrow[r]\arrow[d, two heads] & 0\\
    0\arrow[r] & {}_jJ^{k+1}\arrow[r] & {}_jJ^k\arrow[r] & \text{Gr}_{{}_jJ}^k(\Omega_{E_{\Delta}}^j|_{E_x}) \arrow[r] & 0
\end{tikzcd}
\end{equation}
Recall that the right vertical map \eqref{surj3} is surjective. 

\begin{claim}\label{claim2}
    The maps ${}_jG^s\to {}_jJ^s$ in \eqref{surj4} are surjective for all $s$. In particular all three vertical maps in \eqref{kfilt} are surjective.
\end{claim}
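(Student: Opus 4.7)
The plan is to prove \claimref{claim2} by downward induction on $s$, applying the snake lemma (equivalently, the five lemma for surjections) to the two short exact rows of the diagram \eqref{kfilt}.

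For the base case, take $s=j+1$: by the construction of the Koszul-type filtration in \lemmaref{compatibility}, both ${}_jG^{j+1}=0$ and ${}_jJ^{j+1}=0$, so the map ${}_jG^{j+1}\to {}_jJ^{j+1}$ is trivially surjective. For the inductive step, assume ${}_jG^{s+1}\to {}_jJ^{s+1}$ is surjective. The rightmost vertical map of \eqref{kfilt} at level $k=s$ is \eqref{surj3}, namely
\[
\wedge^s(\mathcal{O}_{F_x}^{\oplus n})\otimes \Omega_{F_x}^{j-s}\longrightarrow \wedge^s(\mathcal{O}_{E_x}^{\oplus n})\otimes \Omega_{E_x}^{j-s}.
\]
This is surjective because, by \claimref{claim1}, it is obtained by tensoring the $s$-th wedge power of the coordinate-wise restriction $\mathcal{O}_{F_x}^{\oplus n}\twoheadrightarrow \mathcal{O}_{E_x}^{\oplus n}$ with the natural surjective restriction $\Omega_{F_x}^{j-s}\twoheadrightarrow \Omega_{E_x}^{j-s}$ of locally free sheaves. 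Applying the snake lemma to \eqref{kfilt}, the inductive hypothesis on the left together with the surjectivity on the right forces the middle vertical map ${}_jG^s\to {}_jJ^s$ to be surjective as well, completing the induction.

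The ``in particular'' assertion is then immediate: at every $k$ in \eqref{kfilt} the left vertical map is surjective by the induction, the right by \eqref{surj3}, and the middle by the argument just given. The main point to verify carefully is the surjectivity of the graded piece map \eqref{surj3}, but this reduces to the purely local statement that both $\mathcal{O}_{F_x}^{\oplus n}\twoheadrightarrow \mathcal{O}_{E_x}^{\oplus n}$ and $\Omega_{F_x}^1\twoheadrightarrow \Omega_{E_x}^1$ are coordinate-wise (resp.\ canonical) restrictions, a fact already secured in \claimref{claim1}. Once this input is in hand the induction unwinds mechanically, and no further geometric argument is required.
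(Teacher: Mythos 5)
Your proof is correct, but it takes a different route from the paper's. The paper argues in one step: by \lemmaref{compatibility}~(2), there are surjections $\wedge^s\mathcal{O}_{F_x}^{\oplus n}\otimes \Omega^{j-s}_{\Phi}|_{F_x}\twoheadrightarrow {}_jG^s$ and $\wedge^s\mathcal{O}_{E_x}^{\oplus n}\otimes \Omega^{j-s}_{E_{\Delta}}|_{E_x}\twoheadrightarrow {}_jJ^s$ (the filtration steps are by construction images of these wedge--tensor products), and since the evident map between the two sources is surjective, a short diagram chase forces ${}_jG^s\to{}_jJ^s$ to be surjective. You instead run a downward induction on $s$ along the filtration itself, starting from the trivial case ${}_jG^{j+1}={}_jJ^{j+1}=0$ and propagating surjectivity through the short exact rows of \eqref{kfilt} via the snake lemma, using the surjectivity of the graded-piece maps \eqref{surj3} at each stage. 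Both arguments are sound and use only ingredients already in place before the claim (commutativity of \eqref{kfilt} via \claimref{claim1}, exactness of its rows, and surjectivity of \eqref{surj3}). The paper's version is shorter and avoids induction by exploiting the explicit presentation of ${}_jF^p$ as an image; yours stays entirely at the level of the filtration's short exact sequences and is perhaps more transparent about why the ``in particular'' statement (surjectivity of all three vertical maps in \eqref{kfilt}) follows, since the left and middle surjectivities are produced simultaneously by the induction rather than deduced afterwards.
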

\begin{proof}
    We use the commutative diagram 
\[
\begin{tikzcd}
    \wedge^s\mathcal{O}_{F_x}^{\oplus n}\otimes \Omega^{j-s}_{\Phi}|_{F_x}\arrow[r, two heads]\arrow[d, two heads] & \wedge^s\mathcal{O}_{E_x}^{\oplus n}\otimes \Omega^{j-s}_{E_{\Delta}}|_{E_x}\arrow[d, two heads]\\
    {}_jG^s\arrow[r] & {}_jJ^s
\end{tikzcd}
\]
where the top horizontal map is evidently surjective, and the vertical maps are surjective by construction (see \lemmaref{compatibility} (2)). Consequently the bottom horizontal map is also surjective.
\end{proof}

To this end, we define $$\mathcal{K}_j:=\text{Ker}\left(\Omega_{\Phi}^j|_{F_x}\to \Omega^j_{E_{\Delta}}|_{E_x}\right)$$ arising from \eqref{surj1}, and $${}_jK^k:=\textrm{Ker}\left({}_jG^k\to {}_jJ^k\right).$$ This way we obtain a filtration $(\mathcal{K}_j,{}_jK^{\bullet})$. Moreover by \claimref{claim2} and snake lemma, we obtain  \begin{equation}\label{aux}
\text{Gr}_{{}_jK}^k(\mathcal{K}_j)\cong \textrm{Ker}\left(\text{Gr}_{{}_jG}^k(\Omega_{\Phi}^j|_{F_x})\to  \text{Gr}_{{}_jJ}^k(\Omega_{E_{\Delta}}^j|_{E_x})\right)\cong \wedge^k\mathcal{O}_{F_x}^{\oplus n}\otimes\Omega_{F_x}^{j-k}(\log E_x)(-E_x)
\end{equation}
where the last isomorphism is a consequence of the exact sequence 
\begin{equation}\label{aux4}
    0\to \Omega_{F_x}^{j-k}(\log E_x)(-E_x)\to \Omega_{F_x}^{j-k}\to \Omega_{E_x}^{j-k}\to 0.
\end{equation}

\smallskip

We summarize the discussion above by the following commutative diagram with exact rows and columns:
\begin{equation}\label{mainkfilt}
\begin{tikzcd}
    & 0\arrow[d] & 0\arrow[d] & 0 \arrow[d] &\\
    0\arrow[r] & {}_jK^{k+1}\arrow[r]\arrow[d] & {}_jK^k\arrow[d]\arrow[r] & \text{Gr}_{{}_jK}^k(\mathcal{K}_j)\arrow[r]\arrow[d] & 0\\
    0\arrow[r] & {}_jG^{k+1}\arrow[r]\arrow[d] & 
    {}_jG^k\arrow[r]\arrow[d] & \text{Gr}_{{}_jG}^k(\Omega_{\Phi}^j|_{F_x})\arrow[r]\arrow[d] & 0\\
    0\arrow[r] & {}_jJ^{k+1}\arrow[r]\arrow[d] & {}_jJ^k\arrow[r]\arrow[d] & \text{Gr}_{{}_jJ}^k(\Omega_{E_{\Delta}}^j|_{E_x}) \arrow[r]\arrow[d] & 0\\
    & 0 & 0 & 0 &
\end{tikzcd}
\end{equation}

In what follows, we will frequently use the fact 
$$H^{s,t}(E_x)=\begin{cases} 
      \mathbb{C} & \textrm{if }\, s=t\leq n-1; \\
      0 & \textrm{ otherwise }
   \end{cases}$$
without any further reference, which holds since $E_x\cong\mathbb{P}^{n-1}$.

\subsection{Key properties of \texorpdfstring{$R^iq_*\Omega_{\Phi}^j$}{TEXT}} We now proceed to the proof of \theoremref{mainfiltthm}. First we need some preparations:

\begin{lemma}\label{filtlemma}
    Assume $j\geq 1$. The following statements hold:
    \begin{enumerate}
        \item $H^i({}_jJ^{k+1})\cong H^i({}_jJ^{k})$ for all $k\leq j-i-1$ via the map arising from the third row of \eqref{mainkfilt},
        \item $H^i({}_jJ^{k})=0$ for all $k\geq j-i+1$,
        \item $H^i({}_jJ^{j-i})\cong H^i(\mathrm{Gr}_{{}_jJ}^{j-i}(\Omega_{E_{\Delta}}^j|_{E_x}))$ via the map arising from the third row of \eqref{mainkfilt}.
        \item $H^i({}_jJ^k)\to H^i(\mathrm{Gr}_{{}_jJ}^k(\Omega_{E_{\Delta}}^j|_{E_x}))$ is a surjection.
        \item $h^i({}_jJ^{k})=\binom{n}{j-i}h^{i,i}(E_x)$ for all $k\leq j-i$. In particular, $h^i({}_jJ^{0})=\binom{n}{j-i}h^{i,i}(E_x)$.
    \end{enumerate}
\end{lemma}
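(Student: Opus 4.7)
The strategy is to leverage the fact that $E_x \cong \PP^{n-1}$ to identify precisely one graded piece of the filtration ${}_jJ^{\bullet}$ whose $H^i$ survives, and then trace cohomology through the short exact sequences
\begin{equation*}
0 \to {}_jJ^{k+1} \to {}_jJ^k \to \mathrm{Gr}_{{}_jJ}^k(\Omega_{E_{\Delta}}^j|_{E_x}) \to 0
\end{equation*}
to show the whole filtered cohomology collapses onto that one piece. The first step is bookkeeping: since $\mathrm{Gr}_{{}_jJ}^k \cong \OO_{E_x}^{\oplus \binom{n}{k}} \otimes \Omega_{E_x}^{j-k}$ and $E_x \cong \PP^{n-1}$, one has $H^i(\mathrm{Gr}_{{}_jJ}^k) = \CC^{\binom{n}{k}}$ when $k = j-i$ (and $i \leq n-1$) and $0$ otherwise; in particular $H^i(\mathrm{Gr}_{{}_jJ}^k) = 0$ unless $k = j-i$, and $H^{i-1}(\mathrm{Gr}_{{}_jJ}^k) = 0$ unless $k = j-i+1$.

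Next I would establish (2) by descending induction on $k$, starting from ${}_jJ^{j+1} = 0$. If $H^i({}_jJ^{k+1}) = 0$ and $k \geq j-i+1$, then $k \neq j-i$, so $H^i(\mathrm{Gr}_{{}_jJ}^k) = 0$, and the long exact sequence
\begin{equation*}
H^i({}_jJ^{k+1}) \to H^i({}_jJ^k) \to H^i(\mathrm{Gr}_{{}_jJ}^k)
\end{equation*}
forces $H^i({}_jJ^k) = 0$, closing the induction.

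For (1) and (3), I would apply the same long exact sequence in the range $k \leq j-i-1$ (resp. $k = j-i$). For $k \leq j-i-1$ both neighbors $H^{i-1}(\mathrm{Gr}_{{}_jJ}^k)$ and $H^i(\mathrm{Gr}_{{}_jJ}^k)$ vanish (since $k$ avoids $j-i$ and $j-i+1$), yielding $H^i({}_jJ^{k+1}) \cong H^i({}_jJ^k)$. For $k = j-i$, the flanking groups $H^i({}_jJ^{j-i+1})$ and $H^{i+1}({}_jJ^{j-i+1})$ both vanish by (2), giving the isomorphism $H^i({}_jJ^{j-i}) \cong H^i(\mathrm{Gr}_{{}_jJ}^{j-i})$. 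For (4), the map $H^i({}_jJ^k) \to H^i(\mathrm{Gr}_{{}_jJ}^k)$ is trivially surjective when $k \neq j-i$ (target vanishes) and is the isomorphism of (3) when $k = j-i$.

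Finally, (5) follows at once by iterating the isomorphisms in (1) to reduce to the case $k = j-i$, and then applying (3):
\begin{equation*}
h^i({}_jJ^k) = h^i({}_jJ^{j-i}) = h^i(\mathrm{Gr}_{{}_jJ}^{j-i}) = \binom{n}{j-i} h^{i,i}(E_x).
\end{equation*}
I expect no serious obstacle here beyond careful index tracking; the whole argument hinges on the extreme sparseness of the Hodge diamond of $\PP^{n-1}$, which concentrates the cohomology of the filtration in a single graded step.
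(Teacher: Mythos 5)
Your proof is correct and follows essentially the same route as the paper: the key observation that $H^i(\mathrm{Gr}_{{}_jJ}^k(\Omega_{E_{\Delta}}^j|_{E_x}))$ vanishes unless $k=j-i$ (because $E_x\cong\mathbb{P}^{n-1}$), descending induction from ${}_jJ^{j+1}=0$ for part (2), and the long exact sequences of the filtration for the rest. The only point worth a glance is the "in particular" clause of (5) when $i>j$, where the chain of isomorphisms from (1) is vacuous and one instead invokes (2) together with $\binom{n}{j-i}=0$, exactly as the paper notes parenthetically.
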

\begin{proof}
We first observe that 
\begin{equation}\label{small}
    H^i(\mathrm{Gr}_{{}_jJ}^k(\Omega_{E_{\Delta}}^j|_{E_x}))=0\,\textrm{ for all }\, k\neq j-i. 
\end{equation}
Consequently (1) follows by passing to the cohomology of the third row of \eqref{mainkfilt}. 

To see (2), first note that for trivial reason, we have
\begin{equation*}
    H^i({}_jJ^{j+1})=0.
\end{equation*}
Let $k=j-i+r$ with $r\geq 1$. The assertion follows easily by decreasing induction on $r$ from \eqref{small}, with $r=i+1$ being the base case, for which the assertion has been established above.

Now (3) is an immediate consequence of (2). Assertion (4) follows from \eqref{small} and (3). Finally (5) is a consequence of (1) and (3) (we also need (2) for the last assertion).
\end{proof}

We make use of the first two rows of \eqref{diag2} in the proof of the following

\begin{lemma}\label{filtlemma2}
    Assume $j\geq 1$. Then the map $$H^i({}_jG^k)\to H^i(\mathrm{Gr}_{{}_jG}^k(\Omega_{\Phi}^j|_{F_x}))$$ arsing from the second row of \eqref{mainkfilt} is surjective.
\end{lemma}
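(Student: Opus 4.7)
The plan is to reduce to a fiberwise cohomological surjectivity and then verify it using two separate geometric inputs. Applied to the exact sequence \eqref{filt1}, the last assertion of \lemmaref{compatibility} tells us that it suffices to prove the surjectivity of
$$H^i(\wedge^k\mathcal{O}_{F_x}^{\oplus n} \otimes \wedge^{j-k}(\Omega_\Phi^1|_{F_x})) \to H^i(\wedge^k\mathcal{O}_{F_x}^{\oplus n} \otimes \Omega_{F_x}^{j-k}).$$
The trivial factor $\wedge^k\mathcal{O}_{F_x}^{\oplus n}$ factors out and $\wedge^{j-k}(\Omega_\Phi^1|_{F_x}) = \Omega_\Phi^{j-k}|_{F_x}$; setting $m:=j-k$, the lemma reduces to showing that the restriction map $H^i(\Omega_\Phi^m|_{F_x}) \to H^i(\Omega_{F_x}^m)$ is surjective for all $i,m$.

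The blowup formula applied to $b_x \colon F_x \cong \mathrm{Bl}_x X \to X$ gives a decomposition
$$H^i(F_x, \Omega_{F_x}^m) \cong H^i(X, \Omega_X^m) \oplus \mathbb{C}\cdot [E_x]^m,$$
in which the second summand is present only when $1 \leq m=i \leq n-1$. We hit the two summands separately. For the first, observe that the second smooth projection $q' := p_2 \circ b_\Delta \colon \Phi \to X$ satisfies $q'|_{F_x} = b_x$, as read off from the first two rows of \eqref{diag2}. The subsheaf inclusion $(q')^*\Omega_X^m \hookrightarrow \Omega_\Phi^m$ arising from the Koszul filtration for the smooth morphism $q'$ has locally free cokernel, hence restricts to an inclusion $b_x^*\Omega_X^m \hookrightarrow \Omega_\Phi^m|_{F_x}$; composing with the filtration quotient $\Omega_\Phi^m|_{F_x} \to \Omega_{F_x}^m$ yields the natural pullback of $m$-forms along $b_x$ (the two maps agree on the dense open $F_x \smallsetminus E_x$, and the target is torsion-free). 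By the projection formula together with $Rb_{x\ast}\mathcal{O}_{F_x} = \mathcal{O}_X$, the induced map on cohomology becomes the usual inclusion $H^i(X, \Omega_X^m) \hookrightarrow H^i(F_x, \Omega_{F_x}^m)$ onto the first summand.

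For the second summand, suppose $1 \leq m = i \leq n-1$ and consider the class $[E_\Delta]^m \in H^m(\Phi, \Omega_\Phi^m)$, the $m$-th power of the $(1,1)$-class $c_1(\mathcal{O}_\Phi(E_\Delta))$. Using \eqref{diag2}, the divisor $E_\Delta$ meets $F_x$ transversely along $E_x$, so by naturality of Chern classes under restriction, the composition $H^m(\Phi, \Omega_\Phi^m) \to H^m(F_x, \Omega_\Phi^m|_{F_x}) \to H^m(F_x, \Omega_{F_x}^m)$ sends $[E_\Delta]^m$ to $[E_x]^m$, which generates the missing summand. Consequently the image in the intermediate group $H^m(F_x, \Omega_\Phi^m|_{F_x})$ already maps onto that summand, and together with the previous step this yields the desired surjectivity. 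The most delicate point is the sheaf-level identification of $b_x^*\Omega_X^m \hookrightarrow \Omega_\Phi^m|_{F_x} \to \Omega_{F_x}^m$ with the natural pullback, which is settled by the density argument sketched above.
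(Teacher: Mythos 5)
Your proof is correct and follows essentially the same route as the paper's: both reduce via \lemmaref{compatibility} to the surjectivity of the coordinate-wise map $H^i(\Omega_{\Phi}^{j-k}|_{F_x})\to H^i(\Omega_{F_x}^{j-k})$, and then account for the blow-up decomposition of the target by producing the exceptional class $c_1(E_x)^{i}$ as the restriction of $c_1(E_{\Delta})^{i}$. The only cosmetic difference is that for the summand $H^i(\Omega_X^{j-k})$ you work at the sheaf level with the subbundle $(q')^*\Omega_X^{j-k}\subset\Omega_{\Phi}^{j-k}$ coming from the second projection, whereas the paper pulls classes back from $H^i(\Omega^{j-k}_{X\times X})$ via $b_{\Delta}^*$ and invokes the K\"unneth surjection; these yield the same classes.
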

\begin{proof}
Thanks to \lemmaref{compatibility}, we are required to show the surjection of $$H^i(\wedge^k(\mathcal{O}_{F_x}^{\oplus n})\otimes \Omega_{\Phi}^{j-k}|_{F_x})\to H^i(\wedge^k(\mathcal{O}_{F_x}^{\oplus n})\otimes \Omega^{j-k}_{F_x}).$$ This map is induced coordinate-wise by the map $H^i(\Omega_{\Phi}^{j-k}|_{F_x})\to H^i(\Omega_{F_x}^{j-k})$ whence we only need to prove the following 
\begin{claim}\label{random}
    The natural map $H^i(\Omega_{\Phi}^{j-k}|_{F_x})\to H^i(\Omega_{F_x}^{j-k})$ is a surjection.
\end{claim}
\begin{proof}
It is enough to show that the composed map 
\begin{equation}\label{needfilt}
    H^i(\Omega_{\Phi}^{j-k})\to H^i(\Omega_{\Phi}^{j-k}|_{F_x})\to H^i(\Omega_{F_x}^{j-k})
\end{equation}
is a surjection. Now there are two cases possible that we analyze below. In what follows, we use the fact that 
\begin{equation}\label{hpq}
    H^i(\Omega_{F_x}^s)=\begin{cases} 
      H^i(\Omega_X^s) & \textrm{if }\, i\neq s,\, \textrm{ or }\, (i,s)\in\left\{(0,0),(n,n)\right\}; \\
      H^i(\Omega_X^s)\oplus\mathbb{C}[c_1(E_{x})^s] & \textrm{ if } 1\leq i=s\leq n-1.
   \end{cases}
\end{equation}

\noindent{\bf Case $1$: $k\neq j-i$ or $i=j-k\in \left\{0,n\right\}$.} The blow-up exact sequences of $b_{\Delta}$ and $b_x$ gives the commutative diagram:
\[
\begin{tikzcd}
    H^i(\Omega_{\Phi}^{j-k})\arrow[r] & H^i(\Omega_{F_x}^{j-k})\\
    H^i(\Omega^{j-k}_{X\times X})\arrow[u]\arrow[r, two heads] & H^i(\Omega_X^{j-k})\arrow[u, swap, "\cong"]
\end{tikzcd}
\]
(see \eqref{diag2}) where we identify $\Omega^1_{\left\{x\right\}\times X}$ with $\Omega_X^1$. The bottom horizontal map is clearly surjective as it is the projection onto a direct summand, and the right vertical map is also surjective. Consequently the composed map \eqref{needfilt} is also surjective.

\smallskip

\noindent\textbf{Case $2$: $1\leq i=j-k\leq n-1$.} In this case, we again work with the commutative diagram
\[
\begin{tikzcd}
    H^i(\Omega_{\Phi}^{i})\arrow[r] & H^i(\Omega_{F_x}^{i})\cong H^i(\Omega_X^i)\oplus\mathbb{C}[c_1(E_{x})^i]\\
    H^i(\Omega^{i}_{X\times X})\arrow[u]\arrow[r, two heads] & H^i(\Omega_X^{i})\arrow[u]
\end{tikzcd}
\]
and we immediately see that it is enough to show that $c_1(E_{x})^i\in H^{i,i}({E_x})$ has a preimage in $H^{i,i}(\Phi)$. Recall that $E_x=E_{\Delta}\cap F_x$ whence $c_1(E_{\Delta})^i\in H^{i,i}(\Phi)$ is the required preimage. 
\end{proof}
The proof of the lemma follows immediately from the above claim.
\end{proof}

We record the following obvious 

\begin{remark}\label{filtrmk}
    The map $$H^i(\text{Gr}_{{}_jG}^k(\Omega_{\Phi}^j|_{F_x}))\to H^i(\text{Gr}_{{}_jJ}^k(\Omega_{E_{\Delta}}^j|_{E_x}))$$ arising from the right column of \eqref{mainkfilt} is surjective for all $i,j,k$. Indeed, this is immediate from the surjections $H^{i,i}(F_x)\to H^{i,i}(E_x)$ for all $i$.
\end{remark}

\begin{lemma}\label{filtlemma3}
Assume $j\geq 1$. Then the map $${}_ja_k: H^i({}_jG^k)\to H^i({}_jJ^k)$$ arising from the middle column of \eqref{mainkfilt} is surjective. 
\end{lemma}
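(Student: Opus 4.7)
The plan is to prove the statement by descending induction on $k$. Since the filtrations $(\Omega_{\Phi}^j|_{F_x}, {}_jG^{\bullet})$ and $(\Omega_{E_{\Delta}}^j|_{E_x}, {}_jJ^{\bullet})$ coming from \lemmaref{compatibility} satisfy ${}_jG^{k}=0={}_jJ^{k}$ for all $k\geq j+1$, the map ${}_ja_k$ is trivially surjective in this range; this provides the base case.

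For the inductive step, assume ${}_ja_{k+1}$ is surjective in every cohomological degree. Taking long exact sequences in cohomology of the middle and bottom rows of \eqref{mainkfilt} and linking them by the vertical maps induced from the middle column of \eqref{mainkfilt} produces a commutative ladder of long exact sequences. I would now execute a standard diagram chase: given $\alpha\in H^i({}_jJ^k)$, first push it forward to its image $\beta\in H^i(\mathrm{Gr}_{{}_jJ}^k(\Omega^j_{E_{\Delta}}|_{E_x}))$. By \remarkref{filtrmk}, $\beta$ admits a preimage in $H^i(\mathrm{Gr}_{{}_jG}^k(\Omega^j_{\Phi}|_{F_x}))$, and by \lemmaref{filtlemma2} that preimage lifts in turn to some $\delta\in H^i({}_jG^k)$. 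The element $\alpha-{}_ja_k(\delta)\in H^i({}_jJ^k)$ then has vanishing image in $H^i(\mathrm{Gr}_{{}_jJ}^k)$, so by exactness of the bottom LES it arises from some $\epsilon\in H^i({}_jJ^{k+1})$. Applying the inductive hypothesis to $\epsilon$ produces a lift $\zeta\in H^i({}_jG^{k+1})$, and the sum of $\delta$ with the image of $\zeta$ under $H^i({}_jG^{k+1})\to H^i({}_jG^k)$ is the required preimage of $\alpha$ under ${}_ja_k$.

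The two non-formal ingredients — the surjectivity on graded pieces supplied by \remarkref{filtrmk} and the surjectivity of $H^i({}_jG^k)$ onto $H^i(\mathrm{Gr}_{{}_jG}^k(\Omega^j_{\Phi}|_{F_x}))$ supplied by \lemmaref{filtlemma2} — are already in hand, so no genuine obstacle remains beyond bookkeeping the chase. It is worth noting that of the two, \lemmaref{filtlemma2} is the substantive one: it rests on the blow-up description $F_x\cong\mathrm{Bl}_xX$ together with the fact that $c_1(E_\Delta)\in H^{i,i}(\Phi)$ restricts to $c_1(E_x)\in H^{i,i}(E_x)$, which supplies the missing Hodge class that the K\"unneth decomposition of $X\times X$ alone does not see.
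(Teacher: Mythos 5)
Your argument is correct, but it follows a genuinely different route from the paper's. The paper first invokes \lemmaref{filtlemma} to reduce everything to the single filtration level $k=j-i$: by parts (1)--(3) of that lemma the groups $H^i({}_jJ^k)$ vanish for $k>j-i$ and are all canonically identified with $H^i(\mathrm{Gr}_{{}_jJ}^{j-i}(\Omega_{E_{\Delta}}^j|_{E_x}))$ for $k\leq j-i$, so the surjectivity of every ${}_ja_k$ follows from the surjectivity of the single composite $H^i({}_jG^{j-i})\twoheadrightarrow H^i(\mathrm{Gr}_{{}_jG}^{j-i})\twoheadrightarrow H^i(\mathrm{Gr}_{{}_jJ}^{j-i})$, supplied by \lemmaref{filtlemma2} and \remarkref{filtrmk} at that one level. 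Your descending induction with the ladder chase bypasses \lemmaref{filtlemma} entirely and instead uses \lemmaref{filtlemma2} and \remarkref{filtrmk} at \emph{every} level $k$; the chase itself is sound (the element $\alpha-{}_ja_k(\delta)$ does lift to $H^i({}_jJ^{k+1})$ by exactness, and commutativity of the square relating ${}_ja_{k+1}$ and ${}_ja_k$ closes the argument). What your version buys is robustness: it would apply to any pair of compatible filtrations with surjective comparison on graded pieces and the lifting property of \lemmaref{filtlemma2}, without needing the cohomology of the $J$-filtration on $E_x\cong\mathbb{P}^{n-1}$ to collapse to one graded level. What the paper's version buys is brevity and the explicit identification of $H^i({}_jJ^k)$ used again later (e.g.\ in \lemmaref{filtlemma}~(5) and in the rank computations of \theoremref{mainfiltthm}). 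One small imprecision in your closing remark: the class that maps onto $c_1(E_x)^i\in H^{i,i}(F_x)$ is $c_1(E_{\Delta})^i\in H^{i,i}(\Phi)$ (the $i$-th power, restricted to $F_x$), but this concerns the already-proved \lemmaref{filtlemma2} and does not affect your argument.
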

\begin{proof}
Thanks to \lemmaref{filtlemma} (2), we assume $k\leq j-i$ and we work with the commutative diagram:
\footnotesize
\begin{equation*}
\begin{tikzcd}
 H^i(\text{Gr}_{{}_jG}^{j-i}(\Omega_{\Phi}^j|_{F_x}))\arrow[d, two heads] & H^i({}_jG^{j-i}\arrow[l, two heads]\arrow[d, "{}_ja_{j-i}"])\arrow[r] & \cdots \arrow[r]\arrow[d, dotted] & H^i({}_jG^k)\arrow[r]\arrow[d, "{}_ja_k"] & \cdots\arrow[r]\arrow[d, dotted] & H^i({}_jG^1)\arrow[r]\arrow[d, "{}_ja_1"] & H^i(\Omega_{\Phi}^j|_{F_x})\arrow[d, "{}_ja_0"]\\
 H^i(\text{Gr}_{{}_jJ}^{j-i}(\Omega_{E_{\Delta}}^j|_{E_x})) & H^i({}_jJ^{j-i})\arrow[l, swap, "\cong"]\arrow[r, "\cong"] & \cdots\arrow[r, "\cong"] & H^i({}_jJ^k)\arrow[r, "\cong"] & \cdots\arrow[r, "\cong"] & H^i({}_jJ^1)\arrow[r, "\cong"] & H^i(\Omega_{E_{\Delta}}^j|_{E_x})
\end{tikzcd}
\end{equation*}
\normalsize
where the bottom horizontal maps are isomorphisms by \lemmaref{filtlemma} (1) and (3), the leftmost top horizontal map is a surjection by \lemmaref{filtlemma2} and the leftmost vertical map is a surjection by \remarkref{filtrmk}. Consequently ${}_ja_{j-i}$ is surjective, whence ${}_ja_k$ is surjective as well.
\end{proof}

\begin{proof}[Proof of \theoremref{mainfiltthm}]
We first prove \eqref{recursion}. The assertion is obvious when $j=0$, so we assume $j\geq 1$. 

Using \lemmaref{filtlemma} (4), \lemmaref{filtlemma2}, \remarkref{filtrmk}, \lemmaref{filtlemma3} and snake lemma, we obtain from \eqref{mainkfilt} the following commutative diagram with exact rows and columns for any $j\geq 1$:
\begin{equation}\label{aux1}
\begin{tikzcd}
    & 0\arrow[d] & 0\arrow[d] & 0 \arrow[d] &\\
    0\arrow[r] & H^i({}_jK^{k+1})\arrow[r]\arrow[d] & H^i({}_jK^k)\arrow[d]\arrow[r] & H^i(\text{Gr}_{{}_jK}^k(\mathcal{K}_j))\arrow[r]\arrow[d] & 0\\
    0\arrow[r] & H^i({}_jG^{k+1})\arrow[r]\arrow[d] & 
    H^i({}_jG^k)\arrow[r]\arrow[d] & H^i(\text{Gr}_{{}_jG}^k(\Omega_{\Phi}^j|_{F_x}))\arrow[r]\arrow[d] & 0\\
    0\arrow[r] & H^i({}_jJ^{k+1})\arrow[r]\arrow[d] & H^i({}_jJ^k)\arrow[r]\arrow[d] & H^i(\text{Gr}_{{}_jJ}^k(\Omega_{E_{\Delta}}^j|_{E_x})) \arrow[r]\arrow[d] & 0\\
    & 0 & 0 & 0 &
\end{tikzcd}
\end{equation}
Also recall from \eqref{aux} that 
\begin{equation}\label{auxx}
    h^i(\text{Gr}_{{}_jK}^k(\mathcal{K}_j))=\binom{n}{k}h^i(\Omega_{F_x}^{j-k}(\log E_x)(-E_x)).
\end{equation}
We claim that 
\begin{equation}\label{aux2}
    h^i(\Omega_{F_x}^{j-k}(\log E_x)(-E_x))=h^{i,j-k}(F_x)-h^{i,j-k}(E_x).
\end{equation}
In view of the exact sequence \eqref{aux4}, it is enough to show that 
\begin{equation}\label{aux3}
    H^i(\Omega_{F_x}^{j-k})\to H^i(\Omega_{E_x}^{j-k})\,\textrm{ arising from \eqref{aux4} is surjective.}
\end{equation}
This is immediate if $j-k\neq i$; when $j-k=i$, the assertion follows since in this case the image of $c_1(E_x)^i\in H^{i,i}(F_x)$ is non-vanishing.

\smallskip

Now \eqref{recursion} follows immediately from \lemmaref{filtlemma} (5), \eqref{aux1}, \eqref{auxx} and\eqref{aux2}. Also, the formula for $h^{n-i}(\Omega_{\Phi}^j|_{F_x})$ follows from \eqref{recursion} and \eqref{hpq} upon simplification. Finally, the remaining assertions in (1) and (2) follows from Grauert's theorem (see \cite[Chapter III, Corollary 12.9]{Har}). 
The proof is now complete.
\end{proof}

\section{Local cohomology of the secant varieties}\label{secfinal} We resume our notation and hypothesis. To emphasize, throughout we assume $L$ is 3-very ample and $\Sigma\neq \mathbb{P}^N:=\mathbb{P}(H^0(L))$. 

\smallskip By \cite[Proposition 3.3]{Ste} (see also \cite[Sect. \S2.1]{MPOW}), we have a distinguished triangle:
\begin{equation}\label{ste}
    {\bf R}t_*\Omega^s_{\mathbb{P}(\mathcal{E}_L)}(\log\Phi)(-\Phi)\to \underline{\Omega}^s_{\Sigma}\to \underline{\Omega}^s_{X}\xrightarrow{+1}
\end{equation}
Dualizing the above, we obtain the distinguished triangle that will be used later on:
\begin{equation}\label{dualdist}
    {\bf D}_{\Sigma}(\underline{\Omega}_X^{s})\to {\bf D}_{\Sigma}(\underline{\Omega}_{\Sigma}^{s})\to {\bf R}t_*\Omega_{\mathbb{P}(\mathcal{E}_L)}^{2n+1-s}(\log\Phi)\xrightarrow{+1}.
\end{equation}
We will use, often without stating, the the complexes $\underline{\Omega}^s_{\Sigma}$ and ${\bf D}_{\Sigma}(\underline{\Omega}_{\Sigma}^{s})$ are supported in non-negative degrees. We also recall the following

\begin{lemma}[{\cite[Remark 6.1, Lemma 6.3]{ORS}}]\label{h0} The following assertions hold:
\begin{enumerate}
    \item We have 
    \begin{equation}\label{hid1}
{\bf D}_{\Sigma}(\underline{\Omega}_X^{2n+1-k})\cong {\bf D}_X(\underline{\Omega}_X^{2n+1-k})[-n-1]\cong \begin{cases} 
      \Omega_X^{k-n-1}[-n-1] & \textrm{if }\, k\geq n+1; \\
      0 & \textrm{otherwise}.
   \end{cases}
\end{equation}  
In particular, 
\begin{equation}\label{hid}
    \mathcal{H}^i({\bf D}_{\Sigma}(\underline{\Omega}_{X}^{2n+1-k}))\cong \begin{cases} 
      \Omega_X^{k-n-1} & \textrm{if }\, i=n+1\, \textrm{ and }\, k\geq n+1; \\
      0 & \textrm{otherwise}.
   \end{cases}
\end{equation}
    \item If one of the following conditions hold:
    \begin{itemize}
        \item[(a)] $0\leq i\leq n-1$, $0\leq k\leq 2n+1$; or
        \item[(b)] $i\geq 0$, $0\leq k\leq n$,
    \end{itemize}
    then $$\mathcal{H}^i({\bf D}_{\Sigma}(\underline{\Omega}_{\Sigma}^{2n+1-k}))\cong R^it_*\Omega_{\mathbb{P}(\mathcal{E}_L)}^k(\log\Phi).$$
    \item $\mathcal{H}^i({\bf D}_{\Sigma}(\underline{\Omega}_{\Sigma}^{2n+1-k}))=0$ for all $i\geq n+2$, $0\leq k\leq 2n+1$.
\end{enumerate}
\end{lemma}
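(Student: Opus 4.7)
The plan is to derive all three assertions from the distinguished triangle \eqref{dualdist}, which upon setting $s = 2n+1-k$ reads
\[
{\bf D}_{\Sigma}(\underline{\Omega}_X^{2n+1-k}) \to {\bf D}_{\Sigma}(\underline{\Omega}_{\Sigma}^{2n+1-k}) \to {\bf R}t_*\Omega_{\mathbb{P}(\mathcal{E}_L)}^{k}(\log\Phi) \xrightarrow{+1}.
\]
Once the cohomology sheaves of the first term are pinned down in part (1), the remaining statements (2) and (3) reduce to routine bookkeeping in the associated long exact sequence.

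For (1), I would first use smoothness of $X$: since $\dim X = n$, $\underline{\Omega}_X^{2n+1-k}$ coincides with $\Omega_X^{2n+1-k}$ and vanishes unless $k \geq n+1$. The identity ${\bf D}_{\Sigma}(-) \cong {\bf D}_X(-)[-(n+1)]$ for complexes of $\mathcal{O}_X$-modules (valid since $\codim_\Sigma X = n+1$, as noted at the start of Section \ref{sechodge}) reduces the problem to Grothendieck duality on the smooth variety $X$, where $\omega_X^{\bullet} = \omega_X[n]$. Combined with the perfect wedge pairing $\Omega_X^{2n+1-k} \otimes \Omega_X^{k-n-1} \to \omega_X$, this gives ${\bf D}_X(\Omega_X^{2n+1-k}) \cong \Omega_X^{k-n-1}$, and hence the claimed formula after a shift by $[-n-1]$. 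Taking cohomology sheaves then yields the displayed description of $\mathcal{H}^i$.

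For (2), I would take the long exact sequence of cohomology sheaves of the triangle above. By (1), $\mathcal{H}^j({\bf D}_\Sigma(\underline{\Omega}_X^{2n+1-k}))$ is concentrated in degree $j = n+1$. In case (a), the bound $0 \leq i \leq n-1$ gives $i, i+1 \leq n$, so both surrounding cohomology sheaves of the first term vanish and the long exact sequence yields the asserted isomorphism. In case (b), $0 \leq k \leq n$ forces $2n+1-k > n$, so $\Omega_X^{2n+1-k} = 0$, the first term of the triangle is zero, and the isomorphism follows trivially.

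For (3), both the first and third terms of the triangle must have vanishing $\mathcal{H}^i$ for $i \geq n+2$. The first term vanishes in this range by (1). For the third, I would invoke the standard bound $R^i t_* \mathcal{F} = 0$ for $i$ strictly greater than the maximal fiber dimension of $t$. From diagram \eqref{ulldiag}, $t$ is an isomorphism off $\Phi$, while on $\Phi$ it coincides with $q\colon \Phi \to X$, whose fibers $F_x \cong \textrm{Bl}_x X$ have dimension $n$; hence the maximal fiber dimension of $t$ is $n$, and $R^i t_* = 0$ for $i > n$. I do not expect any serious obstacle here: the only care needed is in part (1), matching the shift conventions between Grothendieck duality on the smooth $X$ and on the singular $\Sigma$.
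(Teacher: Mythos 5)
Your proof is correct. Note that the paper does not actually prove this lemma --- it is imported verbatim from \cite[Remark 6.1, Lemma 6.3]{ORS} --- so there is no in-text argument to compare against; but your reconstruction uses exactly the machinery the paper sets up and exploits elsewhere: the dualized triangle \eqref{dualdist} with $s=2n+1-k$, the shift identity ${\bf D}_{\Sigma}(-)\cong{\bf D}_X(-)[-n-1]$ for the codimension-$(n+1)$ inclusion $X\subset\Sigma$ together with ${\bf D}_X(\Omega_X^j)\cong\Omega_X^{n-j}$ on the smooth $X$, and the bound $R^it_*=0$ for $i>n$ coming from the fact that the fibers of $t$ are points away from $X$ and copies of $F_x\cong\mathrm{Bl}_xX$ over $X$ (this same vanishing $R^{n+1}t_*\Omega^{2n+1-j}_{\mathbb{P}(\mathcal{E}_L)}(\log\Phi)=0$ is invoked in the proof of \lemmaref{lcdlemma2}). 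The case analysis in (2) is the right one: for (a) both $\mathcal{H}^i$ and $\mathcal{H}^{i+1}$ of ${\bf D}_{\Sigma}(\underline{\Omega}_X^{2n+1-k})$ sit in degrees $\leq n<n+1$ and hence vanish, while for (b) the whole first term is zero since $2n+1-k>n=\dim X$. No gaps.
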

We aim to compute $\textrm{lcd}(\mathbb{P}^N,\Sigma)$ and the generation level of the Hodge filtration on $\mathcal{H}_{\Sigma}^{\textrm{lcd}(\mathbb{P}^N,\Sigma)}(\mathcal{O}_{\mathbb{P}^N})$.

\smallskip

Let us first state some standard facts that will often be used without any further reference:

(1) A sheaf $\mathcal{F}$ on a smooth variety $Z$ is locally free if and only if $\mathcal{E}\textrm{\textit{xt}}^i(\mathcal{F},\omega_Z)=0$ for all $i>0$.

(2) If $\varphi:\mathcal{F}\to\mathcal{F''}$ is a surjective morphism between locally free sheaves on a smooth variety $Z$, then $\textrm{Ker}(\varphi)$ is locally free. Indeed, setting $\mathcal{F}':=\textrm{Ker}(\varphi)$, we obtain the exact sequence $$0\to\mathcal{F}'\to \mathcal{F}\xrightarrow{\varphi}\mathcal{F''}\to 0.$$ Using (1), we conclude $\mathcal{E}\textrm{\textit{xt}}^i(\mathcal{F}',\omega_Z)=0$ for $i\geq 1$, whence $\mathcal{F}'$ is locally free. 
In particular, a surjective morphism between locally free sheaves of equal ranks on a smooth variety is an isomorphism.

\subsection{Proof of \theoremref{mainlcd}} From now on, we work with the following notation: $\sigma: \mathbb{P}\to\mathbb{P}^N$ is an embedded log resolution of $(\mathbb{P}^N,\Sigma)$ which we assume to be isomorphism over $\mathbb{P}^N\backslash\Sigma$. Further, we set $E:=\sigma^{-1}(\Sigma)_{\textrm{red}}$ which is a simple normal crossing divisor on $\mathbb{P}$. 

\begin{proposition}\label{freelcd} Assume $L$ satisfies $(Q_0)$-property. Then 
$$R^i\sigma_*\omega_{\mathbb{P}}(E)=R^i\sigma_*\omega_E=0\,\textrm{ for all }\,i\geq q_X-\nu(X)-2.$$ 
\end{proposition}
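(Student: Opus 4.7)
The plan is to combine Grothendieck duality with Chou--Song's depth computation for $\mathcal{O}_\Sigma$ (Theorem \ref{depth}) via local duality. First, from the adjunction sequence
$$0 \to \omega_{\mathbb{P}} \to \omega_{\mathbb{P}}(E) \to \omega_E \to 0$$
and Grauert--Riemenschneider vanishing $R^i\sigma_*\omega_{\mathbb{P}} = 0$ for $i \geq 1$, we get $R^i\sigma_*\omega_{\mathbb{P}}(E) \cong R^i\sigma_*\omega_E$ for $i \geq 1$. Since $\Sigma \neq \mathbb{P}^N$ forces $N \geq 2n+2$ (as $\dim \Sigma = 2n+1$), so $q_X \geq n+2$, and combined with $\nu(X) \leq n-1$ this gives $q_X - \nu(X) - 2 \geq 1$. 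Thus it suffices to prove the vanishing of $R^i\sigma_*\omega_E$ in the claimed range.

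Next, starting from $0 \to \mathcal{O}_{\mathbb{P}}(-E) \to \mathcal{O}_{\mathbb{P}} \to \mathcal{O}_E \to 0$ one readily computes $R\mathcal{H}om_{\mathcal{O}_{\mathbb{P}}}(\mathcal{O}_E, \omega_{\mathbb{P}}) \cong \omega_E[-1]$. Grothendieck duality (using that $\sigma$ is a birational morphism of smooth varieties, so $\sigma^!\omega_{\mathbb{P}^N} = \omega_{\mathbb{P}}$) then yields
$$R\sigma_*\omega_E \cong R\mathcal{H}om_{\mathcal{O}_{\mathbb{P}^N}}(R\sigma_*\mathcal{O}_E, \omega_{\mathbb{P}^N})[1].$$
Since $L$ satisfies $(Q_0)$-property, $\Sigma$ is Du Bois by Theorem \ref{previous}(1)(a). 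By Schwede's characterization of Du Bois singularities via an embedded log resolution, the natural map $\mathcal{O}_\Sigma \to R\sigma_*\mathcal{O}_E$ is a quasi-isomorphism. Consequently,
$$R^i\sigma_*\omega_E \cong \mathcal{E}\textit{xt}^{i+1}_{\mathcal{O}_{\mathbb{P}^N}}(\mathcal{O}_\Sigma, \omega_{\mathbb{P}^N}).$$

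Finally, local duality on $\mathbb{P}^N$ gives $\mathcal{E}\textit{xt}^k(\mathcal{O}_\Sigma, \omega_{\mathbb{P}^N})_x \cong H^{N-k}_{\mathfrak{m}_x}(\mathcal{O}_{\Sigma,x})^\vee$ for each $x \in \Sigma$, which vanishes whenever $N - k < \depth(\mathcal{O}_{\Sigma,x})$. By Theorem \ref{depth}, $\depth(\mathcal{O}_{\Sigma,x}) \geq n + 2 + \nu(X)$ at every $x \in \Sigma$, so the $\Ext$ vanishes for all $k > N - n - 2 - \nu(X) = q_X - \nu(X) - 2$. Setting $k = i + 1$ yields $R^i\sigma_*\omega_E = 0$ for $i \geq q_X - \nu(X) - 2$, as desired. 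The main substantive step is the Du Bois identification $R\sigma_*\mathcal{O}_E \cong \mathcal{O}_\Sigma$, which bridges the Grothendieck-duality side and the depth side of the argument; everything else is standard homological algebra.
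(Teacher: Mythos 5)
Your proof is correct, and it shares the overall architecture of the paper's argument---reduce to $R^i\sigma_*\omega_E$ via the residue sequence and Grauert--Riemenschneider, relate this to $\mathcal{E}\mathit{xt}^{i+1}_{\mathcal{O}_{\mathbb{P}^N}}(\mathcal{O}_\Sigma,\omega_{\mathbb{P}^N})$, and kill the latter using Chou--Song's depth formula---but the middle step is achieved by a genuinely different mechanism. The paper invokes the injectivity theorem \cite[Corollary B]{MP}, which asserts that $\mathcal{H}^i(\mathbf{R}\sigma_*\omega_E^{\bullet})\to\mathcal{H}^i(\omega_\Sigma^{\bullet})$ is injective for \emph{any} reduced closed subscheme; this yields an injection $R^i\sigma_*\omega_E\hookrightarrow\mathcal{E}\mathit{xt}^{i+1}_{\mathcal{O}_{\mathbb{P}^N}}(\mathcal{O}_\Sigma,\omega_{\mathbb{P}^N})$, which is all that is needed, and the vanishing then follows from $\mathrm{pd}(\mathcal{O}_\Sigma)=q_X-\nu(X)-2$ via Auslander--Buchsbaum. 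You instead exploit the fact that $\Sigma$ is Du Bois under $(Q_0)$ (\theoremref{previous}): Schwede's criterion gives $\mathcal{O}_\Sigma\cong\mathbf{R}\sigma_*\mathcal{O}_E$, and Grothendieck duality then upgrades the injection to an isomorphism $R^i\sigma_*\omega_E\cong\mathcal{E}\mathit{xt}^{i+1}_{\mathcal{O}_{\mathbb{P}^N}}(\mathcal{O}_\Sigma,\omega_{\mathbb{P}^N})$; your final step via local duality is equivalent to the paper's Auslander--Buchsbaum computation. Your route buys a stronger intermediate statement (an isomorphism, hence a computation of $R^i\sigma_*\omega_E$ in all degrees) at the cost of using the Du Bois property, whereas the paper's injectivity input needs nothing about the singularities of $\Sigma$ beyond reducedness; since $(Q_0)$ is assumed anyway, both are legitimate here. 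Your preliminary check that $q_X-\nu(X)-2\geq 1$, so that the Grauert--Riemenschneider reduction covers the whole stated range, is a point the paper leaves implicit and is worth spelling out.
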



\begin{proof} Observe that using the exact sequence $$0\to \omega_{\mathbb{P}}\to \omega_{\mathbb{P}}(E)\to \omega_{E}\to 0$$
and Grauert-Riemenschneider vanishing (\cite[Theorem 4.3.9]{Laz}) which says that $R^k\sigma_*\omega_{\mathbb{P}}=0$ for all $k\geq 1$, we obtain 
\begin{equation*}
    R^i\sigma_*\omega_{\mathbb{P}}(E)\cong R^i\sigma_*\omega_E\,\textrm{ for all }\, i\geq 1.
\end{equation*}
We aim to prove $R^i\sigma_*\omega_E=0$ for $i\geq q_X-\nu(X)-2$. It follows from \cite[Corollary B]{MP} that the natural map $$\mathcal{H}^i({\bf R}\sigma_*\omega_E^{\bullet})\to\mathcal{H}^i(\omega_{\Sigma}^{\bullet})$$ is an injection for all $i$. Using the isomorphisms $$\omega_E^{\bullet}\cong\omega_E[N-1]\,\textrm{ and }\, \omega_{\Sigma}^{\bullet}\cong{\bf R}\mathcal{H}\textrm{\textit{om}}_{\mathcal{O}_{\mathbb{P}^N}}(\mathcal{O}_{\Sigma},\omega_{\mathbb{P}^N}[N])$$ we find that $R^i\sigma_*\omega_E\to \mathcal{E}\text{\textit{xt}}_{\mathcal{O}_{\mathbb{P}^N}}^{i+1}(\mathcal{O}_{\Sigma},\omega_{\mathbb{P}^N})$ is an injection for all $i$. It is well known that $$\mathcal{E}\text{\textit{xt}}_{\mathcal{O}_{\mathbb{P}^N}}^{i+1}(\mathcal{O}_{\Sigma},\omega_{\mathbb{P}^N})=0\,\textrm{ for all }\, i\geq \textrm{pd}(\mathcal{O}_{\Sigma}).$$ On the other hand, recall that we have $\textrm{depth}(\mathcal{O}_{\Sigma})=n+2+\nu(X)$, whence by Auslander-Buchsbaum formula, we deduce that $$\textrm{pd}(\mathcal{O}_{\Sigma})=q_X-\nu(X)-2.$$ Thus $R^i\sigma_*\omega_E=0$ for all $i\geq q_X-\nu(X)-2$. 
\end{proof}

We need some more preparations for the proof of \theoremref{mainlcd}:

\begin{lemma}\label{lcdlemma1}
    For all $i\leq q_X-1$ and $0\leq j\leq N$, we have the isomorphism
    $$R^{q_X-i}\sigma_*\Omega_{\mathbb{P}}^{N-j}(\log E)\cong \mathcal{H}^{n+2-i}({\bf D}_{\Sigma}(\underline{\Omega}_{\Sigma}^j)).$$
\end{lemma}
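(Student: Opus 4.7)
The strategy is to apply Grothendieck duality to the proper birational map $\sigma$, combined with a standard distinguished triangle describing $\underline{\Omega}_\Sigma^j$ via the embedded log resolution. The cohomology vanishing of the sheaf $\Omega_{\mathbb{P}^N}^{N-j}$ in positive degrees is what will force the range restriction $i\leq q_X-1$.

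Concretely, I would start with the distinguished triangle on $\mathbb{P}^N$
\[ \mathbf{R}\sigma_*\Omega_{\mathbb{P}}^j(\log E)(-E) \to \Omega_{\mathbb{P}^N}^j \to \underline{\Omega}_{\Sigma}^j \xrightarrow{+1}, \]
which is the embedded analogue of \eqref{ste} and is standard in the Du~Bois/Hodge-ideal literature (the leftmost term is the ``Du~Bois complex of the pair'' $(\mathbb{P}^N,\Sigma)$). Applying $\mathbf{D}_{\mathbb{P}^N}$ to this triangle and using (i) $\mathbf{D}_{\mathbb{P}^N}(\Omega_{\mathbb{P}^N}^j)\cong \Omega_{\mathbb{P}^N}^{N-j}$ since $\mathbb{P}^N$ is smooth, and (ii) Grothendieck duality for $\sigma$ combined with the perfect residue pairing $\Omega_{\mathbb{P}}^p(\log E)\otimes \Omega_{\mathbb{P}}^{N-p}(\log E)\to \omega_{\mathbb{P}}(E)$, which gives $\Omega_{\mathbb{P}}^{N-j}(\log E)^{\vee}\otimes\omega_{\mathbb{P}}\cong \Omega_{\mathbb{P}}^{j}(\log E)(-E)$ and hence (using $\sigma^!\omega_{\mathbb{P}^N}\cong\omega_{\mathbb{P}}$) the identification $\mathbf{D}_{\mathbb{P}^N}(\mathbf{R}\sigma_*\Omega_{\mathbb{P}}^j(\log E)(-E))\cong \mathbf{R}\sigma_*\Omega_{\mathbb{P}}^{N-j}(\log E)$, I obtain the dualized triangle
\[ \mathbf{D}_{\mathbb{P}^N}(\underline{\Omega}_{\Sigma}^j)\to \Omega_{\mathbb{P}^N}^{N-j}\to \mathbf{R}\sigma_*\Omega_{\mathbb{P}}^{N-j}(\log E)\xrightarrow{+1}. \]

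From the long exact sequence of cohomology sheaves of this triangle and the fact that $\Omega_{\mathbb{P}^N}^{N-j}$ is concentrated in degree~$0$, the connecting morphisms give isomorphisms $R^{a-1}\sigma_*\Omega_{\mathbb{P}}^{N-j}(\log E)\cong \mathcal{H}^a(\mathbf{D}_{\mathbb{P}^N}(\underline{\Omega}_{\Sigma}^j))$ for every $a\geq 2$. Using the codimension shift $\mathbf{D}_{\mathbb{P}^N}(-)\cong \mathbf{D}_{\Sigma}(-)[-c]$ with $c:=\mathrm{codim}_{\mathbb{P}^N}(\Sigma)=N-2n-1$, and setting $a=q_X-i+1$ (which is legitimate precisely when $i\leq q_X-1$), the arithmetic $q_X-c=n+1$ converts the previous identity into the desired $R^{q_X-i}\sigma_*\Omega_{\mathbb{P}}^{N-j}(\log E)\cong \mathcal{H}^{n+2-i}(\mathbf{D}_{\Sigma}(\underline{\Omega}_{\Sigma}^j))$. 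The delicate point is to verify the first triangle in the form stated, which encodes the embedded Du~Bois description of $\Sigma$ and is the embedded analogue of \eqref{ste}; once this is granted, the rest of the argument is a mechanical Grothendieck duality computation with careful shift bookkeeping.
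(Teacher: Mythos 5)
Your proof is correct and follows essentially the same route as the paper: both start from the distinguished triangle $\mathbf{R}\sigma_*\Omega_{\mathbb{P}}^j(\log E)(-E)\to \Omega_{\mathbb{P}^N}^j\to \underline{\Omega}_{\Sigma}^j\xrightarrow{+1}$ (which the paper also takes as given, citing \cite[Proposition 3.3]{Ste}), apply $\mathbf{D}_{\mathbb{P}^N}$ to get $\mathbf{D}_{\Sigma}(\underline{\Omega}_{\Sigma}^j)[-q_{\Sigma}]\to \Omega_{\mathbb{P}^N}^{N-j}\to \mathbf{R}\sigma_*\Omega_{\mathbb{P}}^{N-j}(\log E)\xrightarrow{+1}$, and read off the isomorphism from the long exact sequence of cohomology sheaves. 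You merely make explicit the Grothendieck duality and log-residue-pairing identifications that the paper leaves implicit, and your index bookkeeping ($a=q_X-i+1\geq 2$ iff $i\leq q_X-1$, and $q_X-q_{\Sigma}=n+1$) matches the paper's.
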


\begin{proof} Applying ${\bf D}_{\mathbb{P}^N}(-)$ on the following distinguished triangle (recall \cite[Proposition 3.3]{Ste})
$${\bf R}\sigma_*\Omega_{\mathbb{P}}^j(\log E)(-E)\to \underline{\Omega}_{\mathbb{P}^N}^j\to\underline{\Omega}_{\Sigma}^j\xrightarrow{+1},$$
we obtain the distinguished triangle
\begin{equation*}
    {\bf D}_{\Sigma}(\underline{\Omega}_{\Sigma}^j)[-q_{\Sigma}]\to \Omega_{\mathbb{P}^N}^{N-j}\to {\bf R}\sigma_*\Omega_{\mathbb{P}}^{N-j}(\log E)\xrightarrow{+1}.
\end{equation*}
The assertion follows by taking the cohomology of the above.
\end{proof}

\begin{lemma}\label{lcdlemma2}
The following statements hold for all $0\leq j\leq N$:
\begin{enumerate}
    \item For all $3\leq i\leq q_X-1$, we have the isomorphism $$R^{q_X-i}\sigma_*\Omega_{\mathbb{P}}^{N-j}(\log E)\cong R^{n+2-i}t_*\Omega_{\mathbb{P}(\mathcal{E}_L)}^{2n+1-j}(\log\Phi).$$
    \item We have an exact sequence\footnote{Here we use the convention that for a variety $Z$, $\Omega_Z^{-k}=0$ if $k\geq 1$} $$0\to R^{q_X-2}\sigma_*\Omega_{\mathbb{P}}^{N-j}(\log E)\to R^{n}t_*\Omega_{\mathbb{P}(\mathcal{E}_L)}^{2n+1-j}(\log\Phi)\xrightarrow{f_j} \Omega_X^{n-j}\to R^{q_X-1}\sigma_*\Omega_{\mathbb{P}}^{N-j}(\log E)\to 0.$$
    \item $R^{q_X-i}\sigma_*\Omega_{\mathbb{P}}^{N-j}(\log E)=0$ for all $i\leq 0$.
\end{enumerate}
\end{lemma}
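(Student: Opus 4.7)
The plan is to reduce everything via \lemmaref{lcdlemma1} to statements about $\mathcal{H}^{n+2-i}({\bf D}_{\Sigma}(\underline{\Omega}_{\Sigma}^j))$ and then extract these from the long exact sequence of cohomology sheaves associated to the distinguished triangle \eqref{dualdist}. The decisive input is the explicit formula \eqref{hid} in \lemmaref{h0}, which says that $\mathcal{H}^a({\bf D}_{\Sigma}(\underline{\Omega}_X^{j}))$ vanishes at every $a \neq n+1$ and equals $\Omega_X^{n-j}$ at $a=n+1$ (using $\Omega_X^{n-j}=0$ when $j>n$). Outside the range $a\in\{n,n+1\}$ the connecting homomorphisms therefore supply isomorphisms
\begin{equation*}
\mathcal{H}^a({\bf D}_{\Sigma}(\underline{\Omega}_\Sigma^j)) \cong R^a t_* \Omega^{2n+1-j}_{\mathbb{P}(\mathcal{E}_L)}(\log\Phi).
\end{equation*}

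Parts (1) and (3) are then pure bookkeeping. For part (1) the range $3\leq i\leq q_X-1$ gives $a = n+2-i \leq n-1$, so the above isomorphism applies and \lemmaref{lcdlemma1} rewrites its left-hand side as the desired higher direct image under $\sigma$. Part (3) is even quicker: $i \leq 0$ means $a \geq n+2$, so $\mathcal{H}^a({\bf D}_{\Sigma}(\underline{\Omega}_\Sigma^j)) = 0$ by \lemmaref{h0}, and \lemmaref{lcdlemma1} transfers the vanishing.

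The real content lies in part (2), where $i = 2$ corresponds to $a = n$. Assembling the long exact sequence around $a = n$ and $a = n+1$, and using that $\mathcal{H}^a({\bf D}_{\Sigma}(\underline{\Omega}_X^j))$ vanishes for $a\in\{n-1,n,n+2\}$, produces the five-term exact sequence
\begin{equation*}
0 \to \mathcal{H}^n({\bf D}_\Sigma(\underline{\Omega}_\Sigma^j)) \to R^n t_*\Omega^{2n+1-j}_{\mathbb{P}(\mathcal{E}_L)}(\log\Phi) \to \Omega_X^{n-j} \to \mathcal{H}^{n+1}({\bf D}_\Sigma(\underline{\Omega}_\Sigma^j)) \to R^{n+1} t_*\Omega^{2n+1-j}_{\mathbb{P}(\mathcal{E}_L)}(\log\Phi) \to 0.
\end{equation*}
The four-term sequence in the statement is obtained by showing that the rightmost term vanishes; this is the only step requiring anything beyond manipulating the long exact sequence and is the main (though mild) obstacle. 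I would argue it by a fiber-dimension bound: $t$ is an isomorphism outside $\Phi$, and on $\Phi$ it coincides with the smooth morphism $q:\Phi\to X$, whose fibers $F_x \cong \mathrm{Bl}_x X$ are $n$-dimensional, so every fiber of $t$ has dimension at most $n$ and $R^{k}t_* = 0$ on coherent sheaves for $k\geq n+1$. With this vanishing in hand, \lemmaref{lcdlemma1} identifies the two outer terms of the collapsed sequence with $R^{q_X-2}\sigma_*\Omega_{\mathbb{P}}^{N-j}(\log E)$ and $R^{q_X-1}\sigma_*\Omega_{\mathbb{P}}^{N-j}(\log E)$ respectively, completing part (2).
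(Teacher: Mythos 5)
Your proposal is correct and follows essentially the same route as the paper: reduce via Lemma~\ref{lcdlemma1} to the cohomology sheaves of ${\bf D}_{\Sigma}(\underline{\Omega}_{\Sigma}^j)$, then run the long exact sequence of \eqref{dualdist} against the vanishing pattern \eqref{hid}, with parts (1) and (3) reading off Lemma~\ref{h0}. The only added content is your explicit fiber-dimension justification of $R^{n+1}t_*\Omega^{2n+1-j}_{\mathbb{P}(\mathcal{E}_L)}(\log\Phi)=0$, which the paper merely ``recalls''; your argument for it (fibers of $t$ are points away from $\Phi$ and the $n$-dimensional $F_x$ over $X$) is the right one.
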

\begin{proof} We only give the proof when $0\leq j\leq 2n+1$; the case when $j\geq 2n+2$ is similar and straightforward. We apply \lemmaref{lcdlemma1}. The assertion (1) is an immediate consequence of  \lemmaref{h0} (2) (when $n+2-i<0$, the assertion is trivial since both sides are zero). To prove (2), we take the cohomology of the distinguished triangle \eqref{dualdist} corresponding to $s=j$. Recall that $$R^{n+1}t_*\Omega_{\mathbb{P}(\mathcal{E}_L)}^{2n+1-j}(\log \Phi)=0$$
whence the conclusion follows from \eqref{hid}. Lastly, (3) is a consequence of \lemmaref{h0} (3).
\end{proof}

\begin{lemma}\label{lcdlemma3}
The following statements hold for all $0\leq j\leq N$:
\begin{enumerate}
    \item ${\bf D}_{\Sigma}({\bf R} q_*\Omega_{\Phi}^j)\cong {\bf R}q_*\Omega_{\Phi}^{2n-j}[-1]$.
    \item The map $f_j$ in \lemmaref{lcdlemma2} (2) is the composition $h_j\circ g_j$ where $g_j$ and $h_j$ are as follows:
    \begin{itemize}
        \item $g_j:R^nt_*\Omega^{2n+1-j}_{\mathbb{P}(\mathcal{E}_L)}(\log\Phi)\to R^nq_*\Omega_{\Phi}^{2n-j}$ arising as $$\mathcal{H}^n({\bf D}_{\Sigma}({\bf R}t_*\Omega_{\mathbb{P}(\mathcal{E}_L)}^j(\log \Phi)(-\Phi)))\to \mathcal{H}^{n+1}({\bf D}_{\Sigma}({\bf R} q_*\Omega_{\Phi}^j))$$ using (1), from the distinguished triangle
        $${\bf R}t_*\Omega_{\mathbb{P}(\mathcal{E}_L)}^j(\log \Phi)(-\Phi)\to {\bf R}t_*\Omega_{\mathbb{P}(\mathcal{E}_L)}^j\to {\bf R}q_*\Omega_{\Phi}^j\xrightarrow{+1}.$$
        \item $h_j: R^nq_*\Omega_{\Phi}^{2n-j}\to \Omega_X^{n-j}$ arising as $\mathcal{H}^{n+1}({\bf D}_{\Sigma}({\bf R} q_*\Omega_{\Phi}^j))\to \mathcal{H}^{n+1}({\bf D}_{\Sigma}(\underline{\Omega}_X^j))$ using (1) and \eqref{hid}, from the natural map $\Omega_X^j\to {\bf R}q_*\Omega_{\Phi}^j$. 
    \end{itemize}
    \item The map $g_j:R^nt_*\Omega^{2n+1-j}_{\mathbb{P}(\mathcal{E}_L)}(\log\Phi)\to R^nq_*\Omega_{\Phi}^{2n-j}$ is surjective.
\end{enumerate}
\end{lemma}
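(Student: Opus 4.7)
For part (1), the plan is to invoke Grothendieck duality for the smooth proper morphism $q:\Phi\to X$ of relative dimension $n$. Since $\Phi$ and $X$ are smooth, $q^!\omega_X^\bullet\cong\omega_\Phi^\bullet$, which yields ${\bf D}_X({\bf R}q_*F)\cong {\bf R}q_*{\bf D}_\Phi(F)[n]$ for any bounded complex $F$ on $\Phi$. Taking $F=\Omega_\Phi^j$ and using ${\bf D}_\Phi(\Omega_\Phi^j)\cong\Omega_\Phi^{2n-j}$ (as $\Phi$ is smooth of dimension $2n$) gives ${\bf D}_X({\bf R}q_*\Omega_\Phi^j)\cong {\bf R}q_*\Omega_\Phi^{2n-j}[n]$. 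Finally, since ${\bf R}q_*\Omega_\Phi^j$ is supported on $X\subset \Sigma$ of codimension $n+1$, the identity ${\bf D}_\Sigma(-)\cong {\bf D}_X(-)[-(n+1)]$ recalled in Section~\ref{sechodge} produces the claimed ${\bf R}q_*\Omega_\Phi^{2n-j}[-1]$.

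For part (2), the plan is to compare Steenbrink's triangle \eqref{ste} with the triangle obtained by applying ${\bf R}t_*$ to \eqref{prev1} via a morphism of distinguished triangles:
\begin{equation*}
\begin{tikzcd}[column sep=small]
{\bf R}t_*\Omega_{\mathbb{P}(\mathcal{E}_L)}^j(\log\Phi)(-\Phi) \arrow[r]\arrow[d, equal] & \underline{\Omega}_\Sigma^j \arrow[r]\arrow[d] & \underline{\Omega}_X^j \arrow[r, "+1"]\arrow[d] & \\
{\bf R}t_*\Omega_{\mathbb{P}(\mathcal{E}_L)}^j(\log\Phi)(-\Phi) \arrow[r] & {\bf R}t_*\Omega_{\mathbb{P}(\mathcal{E}_L)}^j \arrow[r] & {\bf R}q_*\Omega_\Phi^j \arrow[r, "+1"] &
\end{tikzcd}
\end{equation*}
where the two non-identity vertical arrows are the canonical restriction maps (the commutativity of the right-hand square reflects the functoriality of these restrictions with respect to the log resolution $t$). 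Applying ${\bf D}_\Sigma(-)$ and identifying ${\bf D}_\Sigma({\bf R}q_*\Omega_\Phi^j)$ via part~(1), ${\bf D}_\Sigma({\bf R}t_*\Omega_{\mathbb{P}(\mathcal{E}_L)}^j)\cong {\bf R}t_*\Omega_{\mathbb{P}(\mathcal{E}_L)}^{2n+1-j}$ via Grothendieck duality, and the left column via \eqref{dualdist}, yields a morphism of dualized triangles. Passing to the boundary map $\mathcal{H}^n\to\mathcal{H}^{n+1}$ of the rightmost column in each row produces the asserted factorization $f_j=h_j\circ g_j$, with $h_j$ induced on $\mathcal{H}^{n+1}$ by the dualized vertical map and $g_j$ the top-row connecting morphism.

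For part (3), the dualized triangle
\begin{equation*}
{\bf D}_\Sigma({\bf R}q_*\Omega_\Phi^j)\to{\bf R}t_*\Omega_{\mathbb{P}(\mathcal{E}_L)}^{2n+1-j}\to{\bf R}t_*\Omega_{\mathbb{P}(\mathcal{E}_L)}^{2n+1-j}(\log\Phi)\xrightarrow{+1}
\end{equation*}
identifies $g_j$ as a connecting morphism in the associated long exact sequence, so $\coker(g_j)$ embeds into $R^{n+1}t_*\Omega_{\mathbb{P}(\mathcal{E}_L)}^{2n+1-j}$. This sheaf vanishes by the standard fiber-dimension bound on higher direct images: the maximum fiber dimension of $t$ is $n$, attained by $F_x\cong\textrm{Bl}_xX$ for $x\in X$, so $R^it_*\mathcal{F}=0$ for every coherent sheaf $\mathcal{F}$ and every $i>n$. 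Hence $g_j$ is surjective.

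The principal obstacle lies in part~(2): the triangulated axioms only guarantee the existence of \emph{some} completion of the two vertical restriction maps to a morphism of triangles, so one must verify that this completion agrees with the natural map $\Omega_X^j\to{\bf R}q_*\Omega_\Phi^j$ obtained from $q^*\Omega_X^j\to\Omega_\Phi^j$ by adjunction. This should follow from the explicit construction of Steenbrink's triangle via the log resolution $t:\mathbb{P}(\mathcal{E}_L)\to\Sigma$, but demands a careful diagram chase tracking the edge maps of the underlying hypercohomology spectral sequences.
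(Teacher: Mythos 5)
Your proposal is correct and follows essentially the same route as the paper's proof: part (1) via Grothendieck duality for $q$ together with the shift ${\bf D}_{\Sigma}(-)\cong{\bf D}_X(-)[-n-1]$, part (2) by dualizing the morphism of triangles comparing \eqref{ste} with ${\bf R}t_*$ applied to \eqref{prev1}, and part (3) from the long exact sequence of the dualized triangle together with $R^{n+1}t_*\Omega^{2n+1-j}_{\mathbb{P}(\mathcal{E}_L)}=0$. The compatibility issue you flag at the end (that the completed vertical arrow really is the natural map $\Omega_X^j\to{\bf R}q_*\Omega_{\Phi}^j$) is likewise left implicit in the paper, which simply asserts the commutative diagram with the natural restriction maps.
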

\begin{proof} We observe that $${\bf D}_{\Sigma}({\bf R} q_*\Omega_{\Phi}^j)\cong {\bf D}_X({\bf R} q_*\Omega_{\Phi}^j)[-n-1]\cong {\bf R}q_*{\bf D}_{\Phi}(\Omega_{\Phi}^j)[n][-n-1]\cong {\bf R}q_*\Omega_{\Phi}^{2n-j}[-1]$$
whence (1) follows. To see (2), we dualize the commutative diagram (the top row is \eqref{ste} with $s=j$, the bottom row is obtained from \eqref{prev1})
\begin{equation*}
\begin{tikzcd}
    {\bf R}t_*\Omega_{\mathbb{P}(\mathcal{E}_L)}^j(\log \Phi)(-\Phi)\arrow[r]\arrow[d, "\cong"] &  \underline{\Omega}_{\Sigma}^j\arrow[r]\arrow[d] & \underline{\Omega}_{X}^j\arrow[d]\arrow[r, "+1"] & \textrm{ }\\
    {\bf R}t_*\Omega_{\mathbb{P}(\mathcal{E}_L)}^j(\log \Phi)(-\Phi)\arrow[r] & {\bf R}t_*\Omega_{\mathbb{P}(\mathcal{E}_L)}^j\arrow[r] & {\bf R}q_*\Omega_{\Phi}^j\arrow[r, "+1"] & \textrm{}
\end{tikzcd}
\end{equation*}
and use \eqref{hid1} to obtain:
\begin{equation*}
\begin{tikzcd}
    {\bf D}_{\Sigma}({\bf R}q_*\Omega_{\Phi}^j)\arrow[r]\arrow[d] & {\bf R}t_*\Omega_{\mathbb{P}(\mathcal{E}_L)}^{2n+1-j}\arrow[r]\arrow[d] & {\bf R}t_*\Omega_{\mathbb{P}(\mathcal{E}_L)}^{2n+1-j}(\log \Phi)\arrow[r, "+1"]\arrow[d, "\cong"] & \textrm{ }\\
    \Omega_X^{n-j}[-n-1]\arrow[r] & {\bf D}_{\Sigma}(\underline{\Omega}_{\Sigma}^j)\arrow[r] & {\bf R}t_*\Omega_{\mathbb{P}(\mathcal{E}_L)}^{2n+1-j}(\log \Phi)\arrow[r, "+1"] & \textrm{ }
\end{tikzcd}
\end{equation*}
whence the assertion follows. Finally, (3) is a consequence of the description of $g_j$; it is surjective since $$\mathcal{H}^{n+1}({\bf D}_{\Sigma}({\bf R}t_*\Omega_{\mathbb{P}(\mathcal{E}_L)}^j))\cong \mathcal{H}^{n+1}({\bf R}t_*\Omega_{\mathbb{P}(\mathcal{E}_L)}^{2n+1-j})\cong R^{n+1}t_*\Omega_{\mathbb{P}(\mathcal{E}_L)}^{2n+1-j}=0.$$ The proof is now complete.
\end{proof}

In what follows, we set  $\mathcal{C}_j^{\bullet}$ to be the cone of $\Omega_X^j\to {\bf R}q_*\Omega_{\Phi}^j$, i.e., we have the distinguished triangle: 
\begin{equation}\label{cone-exact}
    \mathcal{C}_j^{\bullet}\to \Omega_X^j\to {\bf R}q_*\Omega_{\Phi}^j\xrightarrow{+1}.
\end{equation}

\begin{lemma}\label{lcdlemma4}
    If $\mathcal{E}\textrm{xt}^p_{\mathcal{O}_X}(\mathcal{H}^p(\mathcal{C}_j^{\bullet}),\omega_X)=0$ for all $p\geq 0$, then the following conclusions hold:
    \begin{enumerate}
        \item $h_j: R^nq_*\Omega_{\Phi}^{2n-j}\to \Omega_X^{n-j}$ described in \lemmaref{lcdlemma3} (2) is surjective.
        \item $R^{q_X-1}\sigma_*\Omega_{\mathbb{P}}^{N-j}(\log E)=0$. 
    \end{enumerate}
\end{lemma}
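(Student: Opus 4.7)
The plan is to deduce (1) from a vanishing statement for a suitable dual complex, and then derive (2) formally from (1) using the factorization in \lemmaref{lcdlemma3}(2) and the surjectivity in \lemmaref{lcdlemma3}(3).

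First, I would apply the (contravariant) functor ${\bf D}_{\Sigma}(-)$ to the distinguished triangle \eqref{cone-exact}. This yields the triangle
\[
{\bf D}_{\Sigma}({\bf R}q_*\Omega_{\Phi}^j)\to {\bf D}_{\Sigma}(\Omega_X^j)\to {\bf D}_{\Sigma}(\mathcal{C}_j^{\bullet})\xrightarrow{+1}.
\]
Taking cohomology in degree $n+1$ and using the description of $h_j$ from \lemmaref{lcdlemma3}(2), one has the exact segment
\[
\mathcal{H}^{n+1}({\bf D}_{\Sigma}({\bf R}q_*\Omega_{\Phi}^j))\xrightarrow{h_j}\mathcal{H}^{n+1}({\bf D}_{\Sigma}(\Omega_X^j))\to\mathcal{H}^{n+1}({\bf D}_{\Sigma}(\mathcal{C}_j^{\bullet})).
\]
Thus it suffices to prove that $\mathcal{H}^{n+1}({\bf D}_{\Sigma}(\mathcal{C}_j^{\bullet}))=0$.

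The key step is the vanishing $\mathcal{H}^{n+1}({\bf D}_{\Sigma}(\mathcal{C}_j^{\bullet}))=0$. Since $\mathcal{C}_j^{\bullet}$ is a complex of $\mathcal{O}_X$-modules and $X\subset\Sigma$ has codimension $n+1$, Grothendieck duality gives ${\bf D}_{\Sigma}(\mathcal{C}_j^{\bullet})\cong {\bf D}_X(\mathcal{C}_j^{\bullet})[-n-1]$, so the required vanishing amounts to $\mathcal{H}^{0}({\bf D}_X(\mathcal{C}_j^{\bullet}))=0$. Since $X$ is smooth of dimension $n$ we have ${\bf D}_X(\mathcal{C}_j^{\bullet})\cong{\bf R}\shHom_{\mathcal{O}_X}(\mathcal{C}_j^{\bullet},\omega_X)$, and the standard local-to-global spectral sequence
\[
E_2^{p,q}=\mathcal{E}\textrm{\textit{xt}}^p_{\mathcal{O}_X}(\mathcal{H}^{-q}(\mathcal{C}_j^{\bullet}),\omega_X)\Rightarrow\mathcal{H}^{p+q}({\bf R}\shHom_{\mathcal{O}_X}(\mathcal{C}_j^{\bullet},\omega_X))
\]
contributes to the degree-$0$ cohomology only through the terms on the antidiagonal $p+q=0$, i.e.\ the terms $\mathcal{E}\textrm{\textit{xt}}^p_{\mathcal{O}_X}(\mathcal{H}^p(\mathcal{C}_j^{\bullet}),\omega_X)$ for $p\geq 0$ (the $p<0$ terms vanish automatically). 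By hypothesis these all vanish, so $\mathcal{H}^0({\bf D}_X(\mathcal{C}_j^{\bullet}))=0$, which proves (1).

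Finally, for (2), I would feed (1) into the four-term exact sequence of \lemmaref{lcdlemma2}(2). By \lemmaref{lcdlemma3}(2), $f_j=h_j\circ g_j$; the map $g_j$ is surjective by \lemmaref{lcdlemma3}(3), and $h_j$ is surjective by (1), so $f_j$ is surjective. Reading off the cokernel in \lemmaref{lcdlemma2}(2) now gives $R^{q_X-1}\sigma_*\Omega_{\mathbb{P}}^{N-j}(\log E)=0$. The only non-formal input is the vanishing of $\mathcal{H}^{n+1}({\bf D}_{\Sigma}(\mathcal{C}_j^{\bullet}))$, so this is the step that will require the most care in writing up (in particular, verifying that the compatibility in \lemmaref{lcdlemma3}(2) really identifies $h_j$ with the connecting map of the dual triangle).
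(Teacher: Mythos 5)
Your proposal is correct and follows essentially the same route as the paper: both reduce (1) to the vanishing of $\mathcal{H}^{n+1}({\bf D}_{\Sigma}(\mathcal{C}_j^{\bullet}))\cong\mathcal{H}^{0}({\bf D}_X(\mathcal{C}_j^{\bullet}))$ via the dualized triangle \eqref{cone-exact}, establish that vanishing with the local-to-global $\mathcal{E}\textrm{\textit{xt}}$ spectral sequence on the antidiagonal $p+q=0$ (using $\mathcal{H}^p(\mathcal{C}_j^{\bullet})=0$ for $p<0$), and then deduce (2) from the factorization $f_j=h_j\circ g_j$ together with \lemmaref{lcdlemma2}(2) and \lemmaref{lcdlemma3}(3). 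No gaps.
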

\begin{proof} By \lemmaref{lcdlemma2} (2), the assertion (2) holds if and only if $f_j$ is surjective. From \lemmaref{lcdlemma3} (2) and (3) respectively, we see that $f_j=h_j\circ g_j$ and $g_j$ is surjective. 
Thus, (2) is a consequence of (1), i.e. it is enough to show that $h_j$ is surjective under our hypothesis. 

Dualizing \eqref{cone-exact} and passing to its cohomology, we see that $h_j$ is surjective if $\mathcal{H}^{n+1}({\bf D}_{\Sigma}(\mathcal{C}_j^{\bullet}))=0$. Note that
$$\mathcal{H}^{n+1}({\bf D}_{\Sigma}(\mathcal{C}_j^{\bullet}))\cong \mathcal{H}^{n+1}({\bf D}_{X}(\mathcal{C}_j^{\bullet})[-n-1])\cong \mathcal{H}^0({\bf D}_{X}(\mathcal{C}_j^{\bullet})),$$ whence it is enough to show that
\begin{equation}\label{h0need}
    \mathcal{H}^0({\bf D}_{X}(\mathcal{C}_j^{\bullet}))=0.
\end{equation}
Observe that $\mathcal{H}^0({\bf D}_{X}(\mathcal{C}_j^{\bullet}))\cong \mathcal{H}\textit{om}_{\mathcal{O}_X}(\mathcal{C}_j^{\bullet},\omega_X)$. To this end, we use the spectral sequence 
$$E_2^{p,q}=\mathcal{E}\textit{xt}^p_{\mathcal{O}_X}(\mathcal{H}^{-q}(\mathcal{C}_j^{\bullet}),\omega_X)\implies \mathcal{E}\textit{xt}^{p+q}_{\mathcal{O}_X}(\mathcal{C}_j^{\bullet},\omega_X).$$
Now, $p+q=0\implies q=-p$, whence to prove \eqref{h0need} it is enough to show that 
$\mathcal{E}\textit{xt}^p_{\mathcal{O}_X}(\mathcal{H}^{p}(C^{\bullet}),\omega_X)=0$ for all $p$. Clearly this holds by our assumption since it follows from \eqref{cone-exact} that $\mathcal{H}^p(\mathcal{C}_j^{\bullet})=0$ for all $p<0$.
\end{proof}

\begin{proposition}\label{lcdcor} The following statements hold for all $j\geq 1$:
\begin{enumerate}
    \item $\mathcal{H}^0(\mathcal{C}_j^{\bullet})=0.$
    \item $\mathcal{E}\text{xt}^i_{\mathcal{O}_X}(\mathcal{H}^p(C_j^{\bullet}),\omega_X)=0$ for all $i\geq 1$.
    \item The maps $g_j, h_j$ described in \lemmaref{lcdlemma3} (2) are surjective; in particular $f_j=h_j\circ g_j$ is also surjective.
    \item $R^{q_X-1}\sigma_*\Omega_{\mathbb{P}}^{N-j}(\log E)=0$.
\end{enumerate}
\end{proposition}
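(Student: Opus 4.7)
The plan is to establish (1) and (2) by direct arguments, and then derive (3) and (4) as essentially formal consequences of (2), \lemmaref{lcdlemma4}, and \lemmaref{lcdlemma3}(3).

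\textbf{Part (1).} Since $q:\Phi\to X$ is smooth, the exact sequence \eqref{gen1ex} gives an injection $q^*\Omega_X^1\hookrightarrow \Omega_\Phi^1$, and the induced filtration on $\Omega_\Phi^j$ places $q^*\Omega_X^j$ as a subsheaf. Applying $q_*$ to this injection and using \eqref{cs} (with $j=0$), namely $q_*\mathcal{O}_\Phi\cong \mathcal{O}_X$, the projection formula yields an injection $\Omega_X^j\hookrightarrow q_*\Omega_\Phi^j$, which is precisely the map whose cone is $\mathcal{C}_j^{\bullet}$. Hence $\mathcal{H}^0(\mathcal{C}_j^{\bullet})=0$.

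\textbf{Part (2).} The long exact sequence associated to \eqref{cone-exact} gives $\mathcal{H}^p(\mathcal{C}_j^{\bullet})\cong R^{p-1}q_*\Omega_\Phi^j$ for $p\geq 2$, and $\mathcal{H}^1(\mathcal{C}_j^{\bullet})$ equals the cokernel of $\Omega_X^j\hookrightarrow q_*\Omega_\Phi^j$. For $p\geq 2$, $R^{p-1}q_*\Omega_\Phi^j$ is locally free by \theoremref{mainfiltthm}(1), so all higher $\mathcal{E}\mathit{xt}$'s with $\omega_X$ vanish. For $p=1$, I will argue that the cokernel is locally free by showing that $\Omega_X^j\to q_*\Omega_\Phi^j$ is \emph{fibrewise} injective. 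Indeed, using \theoremref{mainfiltthm}(2) to identify $(q_*\Omega_\Phi^j)\otimes\mathbb{C}(x)\cong H^0(F_x,\Omega_\Phi^j|_{F_x})$, the fibre map is the composition of the natural isomorphism $\Omega_X^j\otimes\mathbb{C}(x)\cong H^0(F_x,q^*\Omega_X^j|_{F_x})$ (obtained from $H^0(F_x,\mathcal{O}_{F_x})=\mathbb{C}$) with the injection on $H^0$ induced by the inclusion $q^*\Omega_X^j|_{F_x}\hookrightarrow \Omega_\Phi^j|_{F_x}$. The local $\mathrm{Tor}_1$ sequence then forces $\mathrm{Tor}_1^{\mathcal{O}_X}(\mathcal{H}^1(\mathcal{C}_j^{\bullet}),\mathbb{C}(x))=0$ for every $x$, hence $\mathcal{H}^1(\mathcal{C}_j^{\bullet})$ is locally free, and (2) follows.

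\textbf{Parts (3) and (4).} Since (2) provides in particular the hypothesis $\mathcal{E}\mathit{xt}^p_{\mathcal{O}_X}(\mathcal{H}^p(\mathcal{C}_j^{\bullet}),\omega_X)=0$ for all $p\geq 0$ required by \lemmaref{lcdlemma4}, we obtain at once that $h_j$ is surjective and that $R^{q_X-1}\sigma_*\Omega_{\mathbb{P}}^{N-j}(\log E)=0$. Combined with the surjectivity of $g_j$ established in \lemmaref{lcdlemma3}(3), the composition $f_j=h_j\circ g_j$ is surjective, which proves (3). The main point of the argument is therefore the local freeness of $\mathcal{H}^1(\mathcal{C}_j^{\bullet})$ in Part (2); the key input making that step clean is the fibrewise identification in \theoremref{mainfiltthm}(2), without which one would need to unwind the filtration $(\Omega_\Phi^j|_{F_x},{}_jG^\bullet)$ term by term.
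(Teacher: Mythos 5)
Your proposal is correct, and parts (1), (3), (4) coincide with the paper's argument: (1) is the same observation that $\Omega_X^j\cong q_*q^*\Omega_X^j$ injects into $q_*\Omega_{\Phi}^j$ by left-exactness of $q_*$ applied to the bottom step of the filtration coming from \eqref{gen1ex}, and (3), (4) are the same formal deduction from \lemmaref{lcdlemma3} and \lemmaref{lcdlemma4} (note only that the $p=0$ instance of the hypothesis of \lemmaref{lcdlemma4} is supplied by part (1) rather than part (2), a point you gloss over but which is harmless since you prove (1) first). The genuine divergence is in the crucial case $p=1$ of part (2). The paper dualizes $0\to\Omega_X^j\to q_*\Omega_{\Phi}^j\to\mathcal{H}^1(\mathcal{C}_j^{\bullet})\to 0$, identifies $\mathcal{E}\mathit{xt}^1_{\mathcal{O}_X}(\mathcal{H}^1(\mathcal{C}_j^{\bullet}),\omega_X)$ with the cokernel of $\mathcal{H}\mathit{om}(q_*\Omega_{\Phi}^j,\omega_X)\to\mathcal{H}\mathit{om}(\Omega_X^j,\omega_X)$, and proves that this dual map is surjective by pushing forward the filtration $(\Omega_{\Phi}^j,{}_jL^{\bullet})$ term by term (\claimref{lcdclaim1} and \claimref{lcdclaim2}, which rest on \claimref{random}). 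You instead prove the equivalent statement that $\Omega_X^j\to q_*\Omega_{\Phi}^j$ is a subbundle inclusion, i.e.\ injective on fibres: Grauert base change from \theoremref{mainfiltthm}~(2) identifies the fibre of $q_*\Omega_{\Phi}^j$ at $x$ with $H^0(\Omega_{\Phi}^j|_{F_x})$, and $q^*\Omega_X^j|_{F_x}\cong\wedge^j\mathcal{O}_{F_x}^{\oplus n}$ is the bottom step ${}_jG^j$ of the filtration induced by \eqref{filt1}, hence a subsheaf whose $H^0$ has the full rank $\binom{n}{j}$; the local criterion $\mathrm{Tor}_1^{\mathcal{O}_X}(\mathcal{H}^1(\mathcal{C}_j^{\bullet}),\mathbb{C}(x))=0$ then yields local freeness. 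For a morphism of locally free sheaves, surjectivity of the dual and fibrewise injectivity are equivalent, so the two proofs establish the same fact; yours is shorter in that it bypasses the two auxiliary claims and the unwinding of $(\Omega_{\Phi}^j,{}_jL^{\bullet})$, at the price of leaning more directly on the base-change statement of \theoremref{mainfiltthm}~(2). Both routes ultimately rest on \theoremref{mainfiltthm}, and I see no gap in yours.
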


\begin{proof}
Fix $i,j\geq 1$. Recall that 
$H^p(\mathcal{C}_j^{\bullet})=0$ for all $p<0$. Moreover, passing to the cohomology of the distinguished triangle \eqref{cone-exact}, we obtain the exact sequence
\begin{equation}\label{lcdex0}
    0\to \mathcal{H}^0(\mathcal{C}_j^{\bullet})\to \Omega_X^j\to q_*\Omega_{\Phi}^j\to \mathcal{H}^1(\mathcal{C}_j^{\bullet})\to 0
\end{equation}
and the isomorphisms
\begin{equation}
    \mathcal{H}^p(\mathcal{C}_j^{\bullet})\cong R^{p-1}q_*\Omega_{\Phi}^j\,\textrm{ for all }\, p\geq 2,
\end{equation}
whence by \theoremref{mainfiltthm} we obtain 
$$\mathcal{E}\textrm{\textit{xt}}^i_{\mathcal{O}_X}(\mathcal{H}^p(\mathcal{C}_j^{\bullet}),\omega_X)=0\,\textrm{ for all }\, p\geq 2.$$
Let $(\Omega_{\Phi}^j, {}_jL^{\bullet}) $ be the filtration coming from \lemmaref{compatibility} via the exact sequence \eqref{gen1ex}:
$${}_jL^0=\Omega_{\Phi}^j\supseteq {}_jL^1\supseteq \cdots\supseteq {}_jL^j\supseteq {}_jL^{j+1}=0\,\textrm{ with } \textrm{Gr}_{{}_jL}^k(\Omega_{\Phi}^j)\cong q^*\Omega_X^k\otimes \Omega_{\Phi/X}^{j-k}.$$
In particular, $q_*q^*\Omega_X^j\cong \Omega_X^j$ (apply projection formula and combine it with \eqref{cs}) injects inside $q_*\Omega_{\Phi}^j$ whence (1) follows from \eqref{lcdex0}. Consequently \eqref{lcdex0} becomes the exact sequence
\begin{equation}\label{lcdex0'}
    0\to\Omega_X^j\to q_*\Omega_{\Phi}^j\to \mathcal{H}^1(\mathcal{C}_j^{\bullet})\to 0
\end{equation}
Notice that (3) and (4) follows from (2) through \lemmaref{lcdlemma3} and \lemmaref{lcdlemma4}. Thus, we only need to prove (2).

Another application of \theoremref{mainfiltthm} now shows via \eqref{lcdex0'}
$$\mathcal{E}\textrm{\textit{xt}}^i_{\mathcal{O}_X}(\mathcal{H}^1(\mathcal{C}_j^{\bullet}),\omega_X)=0\,\textrm{ for all }\, i\geq 2,$$
and gives us the exact sequence 
\small 
\begin{equation}\label{random'}
    0\to \mathcal{H}\textrm{\textit{om}}_{\mathcal{O}_X}(\mathcal{H}^1(\mathcal{C}_j^{\bullet}),\omega_X)\to \mathcal{H}\textrm{\textit{om}}_{\mathcal{O}_X}(q_*\Omega_{\Phi}^j,\omega_X)\to \mathcal{H}\textrm{\textit{om}}_{\mathcal{O}_X}(\Omega_X^j,\omega_X)\to \mathcal{E}\textrm{\textit{xt}}^1_{\mathcal{O}_X}(\mathcal{H}^1(\mathcal{C}_j^{\bullet}),\omega_X)\to 0.
\end{equation}
\normalsize 
It remains to prove that $$\mathcal{E}\textrm{\textit{xt}}^1_{\mathcal{O}_X}(\mathcal{H}^1(\mathcal{C}_j^{\bullet}),\omega_X)=0.$$
\begin{claim}\label{lcdclaim1}
The maps $q_*\Omega_{\Phi}^s\to q_*\Omega_{\Phi/X}^s$ arising from \eqref{gen1ex} are surjective for all $s$. 
\end{claim}
\begin{proof}
Clearly the assertion holds for $s=0$ whence we assume $s\geq 1$. Notice that $h^0(\Omega_{\Phi}^s|_{F_x})$ is independent of $x\in X$ by \theoremref{mainfiltthm}, and so is $h^0(\Omega_{\Phi/X}^s|_{F_x})=h^0(\Omega_{F_x}^s)$, so by Grauert's theorem (\cite[Chapter III, Corollary 12.9]{Har}), we have isomorphisms:
$$q_*\Omega_{\Phi}^s\otimes \mathbb{C}(x)\cong H^0(\Omega_{\Phi}^s|_{F_x})\,\textrm{ and }\, q_*\Omega_{\Phi/X}^s\otimes \mathbb{C}(x)\cong H^0(\Omega_{F_x}^s)$$ for all $x\in X$. Thus, it is enough to show that the maps $$H^0(\Omega_{\Phi}^s|_{F_x})\to H^0(\Omega_{F_x}^s)$$ induced by \eqref{filt1} is surjective. But this is a consequence of \claimref{random}.
\end{proof}
\begin{claim}\label{lcdclaim2}
The maps $q_*({}_jL^k)\to q_*(\textrm{Gr}_{{}_jL}^k(\Omega_{\Phi}^j)$ induced by the filtration $(\Omega_{\Phi}^j, {}_jL^{\bullet})$ is surjective for all $k$.
\end{claim}
\begin{proof}
As in the proof of \lemmaref{compatibility}, we work with the commutative diagram:
\[
\begin{tikzcd}
0\arrow[r] & q^*\Omega_X^k\otimes{}_{j-k}L^1\arrow[r]\arrow[d] & q^*\Omega_X^k\otimes \Omega_{\Phi}^{j-k}\arrow[r]\arrow[d] & q^*\Omega_X^k\otimes \Omega_{\Phi/X}^{j-k}\arrow[r]\arrow[d, equal] & 0\\
0\arrow[r] & {}_jL^{k+1}\arrow[r] & {}_jL^k\arrow[r] & \textrm{Gr}^k_{{}_jL}(\Omega_{\Phi}^j)\arrow[r] & 0
\end{tikzcd}
\]
Thus, it is enough to show that the maps $$q_*\Omega_{\Phi}^{j-k}\to q_*\Omega_{\Phi/X}^{j-k}$$ are surjective, which follow from \claimref{lcdclaim1}.
\end{proof}
Thanks to \claimref{lcdclaim2}, we obtain the exact sequences 
\begin{equation}\label{lcdex1}
0\to q_*({}_jL^{k+1})\to q_*({}_jL^k)\to q_*(\textrm{Gr}_{{}_jL}^k(\Omega_{\Phi}^j))\to 0
\end{equation}
for all $k$. Now, 
it is enough to show that $$\mathcal{H}\textrm{\textit{om}}_{\mathcal{O}_X}(q_*({}_jL^{k}),\omega_X)\to \mathcal{H}\textrm{\textit{om}}_{\mathcal{O}_X}(q_*({}_jL^{k+1}),\omega_X)$$ are surjective for all $k$, as $\mathcal{E}\textrm{\textit{xt}}_{\mathcal{O}_X}^1(\mathcal{H}^1(\mathcal{C}_j^{\bullet}),\omega_X)$ is the cokernel of the composition 
\small 
$$\mathcal{H}\textrm{\textit{om}}_{\mathcal{O}_X}(q_*({}_jL^{0}),\omega_X)\to\mathcal{H}\textrm{\textit{om}}_{\mathcal{O}_X}(q_*({}_jL^{1}),\omega_X)\to\cdots \to\mathcal{H}\textrm{\textit{om}}_{\mathcal{O}_X}(q_*({}_jL^{j-1}),\omega_X)\to \mathcal{H}\textrm{\textit{om}}_{\mathcal{O}_X}(q_*({}_jL^{j}),\omega_X)$$
\normalsize
by \eqref{random'}.
But this follows immediately from \eqref{lcdex1} as the sheaves $q_*(\textrm{Gr}_{{}_jL}^k(\Omega_{\Phi}^j))$ are locally free by projection formula, \eqref{cs} and Grauert's theorem. 
\end{proof}

\begin{lemma}\label{lcdlemma6}
Let $j\geq 1$. The map $g_j:R^nt_*\Omega^{2n+1-j}_{\mathbb{P}(\mathcal{E}_L)}(\log\Phi)\to R^nq_*\Omega_{\Phi}^{2n-j}$ of \lemmaref{lcdlemma3} (2) can be expressed as $g_j=g_j''\circ g_j'$ where $g_j'$ and $g_j''$ are as follows:
\begin{itemize}
        \item $g_j':R^nt_*\Omega^{2n+1-j}_{\mathbb{P}(\mathcal{E}_L)}(\log\Phi)\to \mathcal{H}^{n+1}({\bf D}_{\Sigma}({\bf R}q_*\Omega_{\mathbb{P}(\mathcal{E}_L)}^{j}(\log\Phi)|_{\Phi}))$ arising from the distinguished triangle
        $${\bf D}_{\Sigma}({\bf R}q_*\Omega_{\mathbb{P}(\mathcal{E}_L)}^j(\log\Phi)|_{\Phi})\to {\bf D}_{\Sigma}({\bf R}t_*\Omega_{\mathbb{P}(\mathcal{E}_L)}^j(\log\Phi))\to {\bf R}t_*\Omega^{2n+1-j}_{\mathbb{P}(\mathcal{E}_L)}(\log\Phi)\xrightarrow{+1}.$$
        \item $g_j'': \mathcal{H}^{n+1}({\bf D}_{\Sigma}({\bf R}q_*\Omega_{\mathbb{P}(\mathcal{E}_L)}^{j}(\log\Phi)|_{\Phi}))\to R^nq_*\Omega_{\Phi}^{2n-j}$ arising from the distinguished triangle 
        $$ {\bf D}_{\Sigma}({\bf R}q_*\Omega_{\Phi}^{j-1})\to {\bf D}_{\Sigma}({\bf R}q_*\Omega_{\mathbb{P}(\mathcal{E}_L)}^j(\log\Phi)|_{\Phi})\to {\bf D}_{\Sigma}({\bf R}q_*\Omega_{\Phi}^{j})\xrightarrow{+1}$$ via \lemmaref{lcdlemma3} (1).
\end{itemize}
In particular, if $L$ satisfies $(Q_s)$ property with $s=\min\left\{2n+1-j,n\right\}$,  then $g_j'$ is an isomorphism. 
\end{lemma}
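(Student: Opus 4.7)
The plan proceeds in two stages: first deriving the factorization $g_j=g_j''\circ g_j'$, then the isomorphism claim.

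For the factorization, the three distinguished triangles in play all originate from short exact sequences sitting naturally in the diagram \eqref{omega}. Namely, the triangle defining $g_j$ (from \lemmaref{lcdlemma3}(2)) comes from the top row of \eqref{omega}, the triangle defining $g_j'$ comes from the middle column after applying $\mathbf{R}t_\ast$ and $\mathbf{D}_\Sigma$, and the triangle defining $g_j''$ comes from the Poincar\'e residue sequence in the right column after applying $\mathbf{R}q_\ast$ and $\mathbf{D}_\Sigma$. The key identification used in both dualizations is relative Grothendieck duality in the form
\[
\mathbf{D}_\Sigma(\mathbf{R}t_\ast \Omega^j_{\mathbb{P}(\mathcal{E}_L)}(\log\Phi))\cong \mathbf{R}t_\ast \Omega^{2n+1-j}_{\mathbb{P}(\mathcal{E}_L)}(\log\Phi)(-\Phi),
\]
which follows from $\dim\mathbb{P}(\mathcal{E}_L)=\dim\Sigma=2n+1$ together with the local duality $(\Omega^j(\log\Phi))^\vee\otimes\omega_{\mathbb{P}(\mathcal{E}_L)}\cong \Omega^{2n+1-j}(\log\Phi)(-\Phi)$ (and similarly for $q$). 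Assembling the three triangles into an octahedron and passing to $(n+1)$st cohomology then yields $g_j=g_j''\circ g_j'$.

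For the isomorphism, take the long exact sequence of cohomology sheaves associated with the triangle defining $g_j'$. Setting $B^\bullet:=\mathbf{D}_\Sigma(\mathbf{R}t_\ast \Omega^j_{\mathbb{P}(\mathcal{E}_L)}(\log\Phi))\cong \mathbf{R}t_\ast \Omega^{2n+1-j}_{\mathbb{P}(\mathcal{E}_L)}(\log\Phi)(-\Phi)$, the map $g_j'$ is an isomorphism as soon as $\mathcal{H}^n(B^\bullet)=\mathcal{H}^{n+1}(B^\bullet)=0$, i.e.\ as soon as $R^it_\ast \Omega^{2n+1-j}_{\mathbb{P}(\mathcal{E}_L)}(\log\Phi)(-\Phi)=0$ for $i=n$ and $i=n+1$. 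The vanishing at $i=n+1$ is automatic since the maximal fiber dimension of $t$ is $n$ (the only non-trivial fibres are the $F_x\cong \mathrm{Bl}_x X$ over $x\in X$).

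The real content is the vanishing at $i=n$, which splits according to the size of $k:=2n+1-j$. When $k\le n$ (equivalently $j\ge n+1$), one has $s=k$, and \propositionref{rivanishing} applied with $p=k$ gives the vanishing directly from $(Q_s)=(Q_k)$. When $k>n$ (equivalently $j\le n$), so $s=n$, \propositionref{rivanishing} is not directly applicable (its conclusion is stated only for $0\le k\le p$, and $(Q_p)$ stabilises at $p=n$); for $k\ge n+2$ the vanishing follows instead from the Steenbrink-type vanishing $R^if_\ast\Omega^j_Y(\log E)(-E)=0$ for $i+j>\dim Y$ applied to $t$. The delicate boundary case $k=n+1$ (i.e.\ $j=n$, where $i+k=\dim\mathbb{P}(\mathcal{E}_L)$) is expected to be the principal obstacle; the plan there is to reinterpret $R^n t_\ast \Omega^{n+1}_{\mathbb{P}(\mathcal{E}_L)}(\log\Phi)(-\Phi)$, via Grothendieck duality on $t$, as $\mathcal{H}^n(\mathbf{D}_\Sigma(\mathbf{R}t_\ast \Omega^n_{\mathbb{P}(\mathcal{E}_L)}(\log\Phi)))$, combine \propositionref{rivanishing} at $p=n$ with the residue sequence $0\to \Omega^n_{\mathbb{P}(\mathcal{E}_L)}(\log\Phi)(-\Phi)\to \Omega^n_{\mathbb{P}(\mathcal{E}_L)}(\log\Phi)\to \Omega^n_{\mathbb{P}(\mathcal{E}_L)}(\log\Phi)|_\Phi\to 0$ to reduce the computation to $\mathbf{R}q_\ast(\Omega^n_{\mathbb{P}(\mathcal{E}_L)}(\log\Phi)|_\Phi)$, and then to invoke the rank and filtration information from \theoremref{mainfiltthm}.
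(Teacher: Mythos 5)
Your factorization argument is essentially the paper's: the three triangles are exactly the ones obtained by applying $\mathbf{R}t_*$ (resp.\ $\mathbf{R}q_*$) and $\mathbf{D}_\Sigma$ to the rows and columns of the diagram \eqref{omega}, and the paper assembles them into a commutative diagram of distinguished triangles (its \eqref{lcddiag}) rather than invoking the octahedron by name; the duality identification $\mathbf{D}_\Sigma(\mathbf{R}t_*\Omega^j_{\mathbb{P}(\mathcal{E}_L)}(\log\Phi))\cong \mathbf{R}t_*\Omega^{2n+1-j}_{\mathbb{P}(\mathcal{E}_L)}(\log\Phi)(-\Phi)$ you use is the same one. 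Your reduction of the isomorphism claim to the vanishing of $R^it_*\Omega^{2n+1-j}_{\mathbb{P}(\mathcal{E}_L)}(\log\Phi)(-\Phi)$ for $i=n,n+1$ is also correct.

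The gap is in how you try to establish that vanishing. You assert that \propositionref{rivanishing} ``is not directly applicable'' when $k:=2n+1-j>n$ because ``$(Q_p)$ stabilises at $p=n$.'' This gets the logic backwards: the stabilisation is precisely what makes the proposition applicable. As noted immediately after \definitionref{qp}, $(Q_n)$ implies $(Q_p)$ for \emph{all} $p\geq 0$ (condition (3) is vacuous for $q>n$ since $\Omega^q_{F_x}=0$). Hence under the hypothesis $(Q_s)$ with $s=\min\{2n+1-j,n\}$, the property $(Q_{2n+1-j})$ holds in every case, and \propositionref{rivanishing} applied with $p=2n+1-j$ gives $R^it_*\Omega^{2n+1-j}_{\mathbb{P}(\mathcal{E}_L)}(\log\Phi)(-\Phi)=0$ for all $i\geq 1$ in one stroke --- this single sentence is the paper's entire proof of the last assertion. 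Because you believed the proposition unavailable for $k>n$, you took a detour: the Steenbrink-type vanishing you invoke for $k\geq n+2$ is true but extraneous, and for the boundary case $k=n+1$ (i.e.\ $j=n$) you only sketch a ``plan'' --- reinterpret via duality, use the residue sequence, invoke \theoremref{mainfiltthm} --- without carrying it out. As written, the case $j=n$ is therefore not proved, which is a genuine hole; it closes immediately once the stabilisation of the $(Q_p)$-properties is used correctly.
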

\begin{proof} Follows immediately from the following commutative diagram with distinguished left two columns and bottom two rows (triangles) obtained from \eqref{omega}:
\begin{equation}\label{lcddiag}
\begin{tikzcd}
{\bf D}_{\Sigma}({\bf R}q_*\Omega_{\Phi}^{j-1}) \arrow[r, equal]\arrow[d] & {\bf D}_{\Sigma}({\bf R}q_*\Omega_{\Phi}^{j-1})\arrow[d] & &\\
{\bf D}_{\Sigma}({\bf R}q_*\Omega_{\mathbb{P}(\mathcal{E}_L)}^j(\log\Phi)|_{\Phi})\arrow[r]\arrow[d] & {\bf D}_{\Sigma}({\bf R}t_*\Omega_{\mathbb{P}(\mathcal{E}_L)}^j(\log\Phi))\arrow[d]\arrow[r] & {\bf R}t_*\Omega^{2n+1-j}_{\mathbb{P}(\mathcal{E}_L)}(\log\Phi)\arrow[d, equal]\arrow[r, "+1"] & \textrm{ } \\
{\bf D}_{\Sigma}({\bf R}q_*\Omega_{\Phi}^{j}) \arrow[r]\arrow[d, "+1"] & {\bf D}_{\Sigma}({\bf R}t_*\Omega_{\mathbb{P}(\mathcal{E}_L)}^{j}) \arrow[r]\arrow[d, "+1"] & {\bf R}t_*\Omega^{2n+1-j}_{\mathbb{P}(\mathcal{E}_L)}(\log\Phi) \arrow[r, "+1"] & \textrm{ }\\
\textrm{ } & \textrm{ } & & 
\end{tikzcd}
\end{equation}
The last assertion follows since $\mathcal{H}^i({\bf D}_{\Sigma}({\bf R}t_*\Omega_{\mathbb{P}(\mathcal{E}_L)}^j(\log\Phi)))\cong R^it_*\Omega_{\mathbb{P}(\mathcal{E}_L)}^{2n+1-j}(\log\Phi)(-\Phi)=0$ for $i\geq 1$ by \propositionref{rivanishing}.
\end{proof}

\begin{proposition}\label{updated}
Assume $L$ satisfies $(Q_n)$-property, and let $j\geq 1$ be an integer. Then 
\begin{equation}\label{hilcd}
    R^{q_X-2}\sigma_*\Omega_{\mathbb{P}}^{N-j}(\log E)=0
\end{equation}
if and only if $H^i(\mathcal{O}_X)=0$ for all $1\leq i\leq j$.
\end{proposition}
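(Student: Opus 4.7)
The plan is to reduce the vanishing to a sheaf-theoretic condition on $q_*\Omega_\Phi^{\bullet}$ and then unpack it using the filtration of Section \ref{secheart}. By Lemma \ref{lcdlemma2}(2) combined with Proposition \ref{lcdcor}(3)-(4) there is a short exact sequence
$$0\to R^{q_X-2}\sigma_*\Omega_{\mathbb{P}}^{N-j}(\log E)\to R^n t_*\Omega_{\mathbb{P}(\mathcal{E}_L)}^{2n+1-j}(\log\Phi)\xrightarrow{f_j}\Omega_X^{n-j}\to 0,$$
so the target vanishing is equivalent to $f_j$ being an isomorphism (and, when $j>n$, to $R^n t_*\Omega_{\mathbb{P}(\mathcal{E}_L)}^{2n+1-j}(\log\Phi)=0$). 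Lemmas \ref{lcdlemma3} and \ref{lcdlemma6} together with the $(Q_n)$-hypothesis factor $f_j=h_j\circ g_j''\circ g_j'$ with $g_j'$ an isomorphism and $h_j,g_j''$ surjective, so the question becomes when $\ker(h_j\circ g_j'')=0$.

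Next I would apply the octahedral axiom to the triangle from Lemma \ref{lcdlemma6} and the Grothendieck dual of the cone triangle \eqref{cone-exact}. Setting $M={\bf D}_\Sigma({\bf R}q_*\Omega^j_{\mathbb{P}(\mathcal{E}_L)}(\log\Phi)|_\Phi)$ and letting $N$ be the cone of the composition $M\to\Omega_X^{n-j}[-n-1]$, this yields a distinguished triangle
$${\bf D}_\Sigma({\bf R}q_*\Omega_\Phi^{j-1})[1]\to N\to{\bf D}_\Sigma(\mathcal{C}_j^\bullet)\xrightarrow{+1},$$
with $\mathcal{H}^n(N)\cong\ker(h_j\circ g_j'')$. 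By Proposition \ref{lcdcor}(2) the spectral sequence computing ${\bf D}_X(\mathcal{C}_j^\bullet)$ collapses, giving $\mathcal{H}^{-k}({\bf D}_X(\mathcal{C}_j^\bullet))\cong\shHom(\mathcal{H}^k(\mathcal{C}_j^\bullet),\omega_X)$; together with relative Grothendieck duality for the smooth proper map $q$ (yielding $R^nq_*\Omega_\Phi^{2n-s}\cong(q_*\Omega_\Phi^s)^\vee\otimes\omega_X$), the long exact sequence of the above triangle reads
$$\shHom(R^1q_*\Omega_\Phi^j,\omega_X)\xrightarrow{\varphi}\shHom(q_*\Omega_\Phi^{j-1},\omega_X)\to\mathcal{H}^n(N)\to\shHom(q_*\Omega_\Phi^j/\Omega_X^j,\omega_X)\to 0,$$
where $\varphi$ is Grothendieck-dual to the connecting map $\delta_0\colon q_*\Omega_\Phi^{j-1}\to R^1q_*\Omega_\Phi^j$ of the residue sequence $0\to\Omega_\Phi^j\to\Omega^j(\log\Phi)|_\Phi\to\Omega_\Phi^{j-1}\to 0$.

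The remaining task is to translate $\mathcal{H}^n(N)=0$ into a condition on $H^\bullet(\mathcal{O}_X)$. Using Claim \ref{lcdclaim2} and the identification $q_*\Omega_{\Phi/X}^r\cong H^0(\Omega_X^r)\otimes\mathcal{O}_X$, both $q_*\Omega_\Phi^{j-1}$ and $q_*\Omega_\Phi^j/\Omega_X^j$ acquire filtrations whose graded pieces take the shape $\Omega_X^k\otimes H^{j-1-k}(\mathcal{O}_X)$ and $\Omega_X^k\otimes H^{j-k}(\mathcal{O}_X)$ respectively (Hodge symmetry), while $R^1q_*\Omega_\Phi^j$ is described via Lemma \ref{ext} and Theorem \ref{mainfiltthm}. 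Fiberwise, $\delta_0|_{F_x}$ amounts to cup product with $c_1(\mathcal{O}_\Phi(\Phi)|_{F_x})=2[E_x]-b_x^*c_1(L)$ (via \eqref{further}); using the blowup decomposition of $H^{\bullet,\bullet}(F_x)$ for $b_x\colon F_x\to X$, the $[E_x]$-component of $\delta_0$ pairs non-degenerately with the individual graded pieces above. Carrying the graded pieces through the displayed LES shows that $\mathcal{H}^n(N)=0$ precisely when every $H^i(\mathcal{O}_X)$ with $1\le i\le j$ vanishes.

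The main obstacle is this last bookkeeping step: matching the graded pieces contributed by the three terms of the LES against the individual $H^i(\mathcal{O}_X)$ (whose ranges of contribution shift depending on whether $j\le n$ or $j>n$), and verifying that the $b_x^*c_1(L)$-component of $\delta_0$ does not interfere with the non-degeneracy of the $[E_x]$-component on the relevant graded pieces, so that no vanishing in the range $1\le i\le j$ is lost.
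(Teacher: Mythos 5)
Your reduction of \eqref{hilcd} to the injectivity of $f_j$, and the factorization $f_j=h_j\circ g_j''\circ g_j'$ with $g_j'$ an isomorphism under $(Q_n)$, match the paper's strategy; the octahedral/duality reformulation via $\mathcal{H}^n(N)$ is a legitimate repackaging of the same maps. The problem is that the argument stops exactly where the real work begins, and you say so yourself: the ``bookkeeping step'' you defer is the entire content of the converse direction. Concretely, after reducing to the surjectivity of $r_j\colon R^{n-1}q_*\Omega_\Phi^{2n-j}\to R^nq_*\Omega_\Phi^{2n+1-j}$ (equivalently, by \theoremref{mainfiltthm}~(2) and duality, to the injectivity of cup product $H^0(\Omega_\Phi^{j-1}|_{F_x})\to H^1(\Omega_\Phi^{j}|_{F_x})$ with the restriction of $c_1(\Phi)$), you assert that the $[E_x]$-component of $\delta_0$ ``pairs non-degenerately with the individual graded pieces'' and that the $b_x^*c_1(L)$-component does not interfere, but you give no mechanism for either claim. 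The paper supplies exactly the two missing ingredients: (a) under the hypothesis $H^i(\mathcal{O}_X)=0$ for $1\le i\le j-1$, the restriction $H^0(\Omega_\Phi^{j-1}|_{F_x})\to H^0(\Omega_{E_\Delta}^{j-1}|_{E_x})$ is an isomorphism because both spaces have dimension $\binom{n}{j-1}$ by \theoremref{mainfiltthm}~(1) and \lemmaref{filtlemma}~(5) --- this is what permits testing injectivity after restricting to $E_x$, and it is the only place the hypothesis on $H^\bullet(\mathcal{O}_X)$ actually enters the non-degeneracy argument, which your sketch never explains; and (b) on $E_x$ the class $b_x^*c_1(L)$ dies, while $c_1(\Phi)|_{E_x}$ is a nonzero multiple of the hyperplane class by \eqref{further}, so via the splitting \eqref{another} the map becomes a coordinatewise cup product $H^0(\mathcal{O}_{E_x})\to H^1(\Omega^1_{E_x})$ with a nonzero class, hence injective. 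Without (a) there is no way to rule out interference from the $b_x^*c_1(L)$-part on all of $F_x$, which is precisely the obstacle you name but do not remove.

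A secondary point: for the forward direction your long exact sequence only yields the implication once one identifies the surjective image $\shHom(q_*\Omega_\Phi^j/\Omega_X^j,\omega_X)$ (or, equivalently, $\ker h_j$) with pieces detecting each $H^i(\mathcal{O}_X)$ for $1\le i\le j$; the paper does this more directly by a rank count (\claimref{extraclaim}), comparing $\mathrm{rank}(R^nq_*\Omega_\Phi^{2n-j})=\binom{n}{n-j}+\sum_{k=0}^{j-1}\binom{n}{k}h^{0,j-k}(X)$ against $\mathrm{rank}(\Omega_X^{n-j})=\binom{n}{n-j}$ and invoking the already-established surjectivity of $h_j$. That step is routine given \theoremref{mainfiltthm}, but it is asserted rather than carried out in your write-up as well.
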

\begin{proof}
We first prove the 

\begin{claim}\label{extraclaim} The following conditions on $h_j: R^nq_*\Omega_{\Phi}^{2n-j}\to \Omega_X^{n-j}$ described in \lemmaref{lcdlemma3} (2) are equivalent:
\begin{enumerate}
    \item $h_j$ is injective,
    \item $h_j$ is an isomorphism,
    \item $H^i(\mathcal{O}_X)=0$ for all $1\leq i\leq j$.
\end{enumerate}
\end{claim}
\begin{proof}
The equivalence of (1) and (2) follows from the surjectivity of $h_j$ proven in \propositionref{lcdcor} (3). For the same reason, (2) holds if and only if $$\textrm{rank}(R^nq_*\Omega_{\Phi}^{2n-j})=\textrm{rank}(\Omega_X^{n-j}).$$ Thus, it is enough to prove that the above equality is equivalent to (3). To this end, apply Theorem \ref{mainfiltthm} (1) to obtain 
\begin{equation*}\label{lcdgen1}
    \textrm{rank}(R^nq_*\Omega_{\Phi}^{2n-j})=\binom{n}{n-j}+\sum_{k=0}^{j-1}\binom{n}{k}h^{0,j-k}(X).
\end{equation*}
On the other hand, we have 
\begin{equation*}\label{lcdgen2}
    \textrm{rank}(\Omega_X^{n-j})=\binom{n}{n-j},
\end{equation*}
whence the assertion follows.
\end{proof}

Observe that by \lemmaref{lcdlemma2} (2), \eqref{hilcd} holds if and only if the map $f_j$ is injective.

First assume \eqref{hilcd} holds, 
i.e. $f_j$ is injective. 
Now, by \lemmaref{lcdlemma3} (2) and (3), $f_j=h_j\circ g_j$ and $g_j$ is surjective. Thus, $h_j: R^nq_*\Omega_{\Phi}^{2n-j}\to \Omega_X^{n-j}$ is injective, whence $H^i(\mathcal{O}_X)=0$ for all $1\leq i\leq j$ by \claimref{extraclaim}. 

Conversely, assume $H^i(\mathcal{O}_X)=0$ for all $1\leq i\leq j$, and we aim to show that $f_j$ is an injection. By \claimref{extraclaim}, $h_j$ is an isomorphism. Also, by \lemmaref{lcdlemma6}, we have $g_j=g_j''\circ g_j'$ and $g_j'$ is an isomorphism. Thus, it is enough to show that $g_j''$ is an injection, or equivalently the map $$r_j: \mathcal{H}^n({\bf D}_{\Sigma}({\bf R}q_*\Omega_{\Phi}^{j}))\cong R^{n-1}q_*\Omega_{\Phi}^{2n-j}\to \mathcal{H}^{n+1}({\bf D}_{\Sigma}({\bf R}q_*\Omega_{\Phi}^{j-1}))\cong R^nq_*\Omega_{\Phi}^{2n+1-j}$$ is surjective (the isomorphisms above are again consequence of \lemmaref{lcdlemma3} (1)). Now, evidently the map $r_j$ is
arising from the exact sequence 
$$0\to \Omega_{\Phi}^{2n+1-j}\to \Omega_{\mathbb{P}(\mathcal{E}_L)}^{2n+1-j}(\log\Phi)|_{\Phi}\to \Omega_{\Phi}^{2n-j}\to0.$$ By \theoremref{mainfiltthm} (2), we have $$R^{n-1}q_*\Omega_{\Phi}^{2n-j}\otimes\mathbb{C}(x)\cong H^{n-1}(\Omega_{\Phi}^{2n-j}|_{F_x})\,\textrm{ and }\, R^nq_*\Omega_{\Phi}^{2n+1-j}\otimes \mathbb{C}(x)\cong H^n(\Omega^{2n+1-j}_{\Phi}|_{F_x}) $$ for all $x\in X$. Thus, it is enough to show that the map $$H^{n-1}(\Omega_{\Phi}^{2n-j}|_{F_x})\to H^n(\Omega^{2n+1-j}_{\Phi}|_{F_x})$$ is surjective, or equivalently its dual $$H^0(\Omega_{\Phi}^{j-1}|_{F_x})\to H^1(\Omega_{\Phi}^{j}|_{F_x})$$ is injective. Now, this map is the cup product by the image of $c_1(\Phi)\in H^1(\Omega^1_{\Phi})$ through the restriction map $H^1(\Omega^1_{\Phi})\to H^1(\Omega^1_{\Phi}|_{F_x})$. Recall from \eqref{another} that 
$$\Omega_{E_{\Delta}|_{E_x}}^1\cong \mathcal{O}_{E_x}^{\oplus n}\oplus \Omega_{E_x}^1.$$
To this end, we use the commutative diagram (the upward vertical maps are injective because of the splitting)
\[
\begin{tikzcd}
H^0(\Omega_{\Phi}^{j-1}|_{F_x})\arrow[r]\arrow[d, two heads] & H^1(\Omega_{\Phi}^{j}|_{F_x})\arrow[d, two heads]\\
H^0(\Omega_{E_{\Delta}}^{j-1}|_{E_x})\arrow[r] & H^1(\Omega_{E_{\Delta}}^{j}|_{E_x})\\
H^0(\wedge^{j-1}\mathcal{O}_{E_x}^{\oplus n})\arrow[u, hook] \arrow[r] & H^1(\wedge^{j-1}\mathcal{O}_{E_x}^{\oplus n}\otimes \Omega_{E_x}^1)\arrow[u, hook]
\end{tikzcd}
\]
The downward vertical maps are surjections by \lemmaref{filtlemma3} (the surjection of the left one corresponding to $j=1$ is obvious). Once again, by \theoremref{mainfiltthm} (1) and \lemmaref{filtlemma} (5), we have $$h^0(\Omega_{\Phi}^{j-1}|_{F_x})=h^0(\Omega_{E_{\Delta}}^{j-1}|_{E_x})=\binom{n}{j-1}$$ as $h^i(\mathcal{O}_{X})=0$ for $1\leq i\leq j-1$, whence the downward left vertical map is an isomorphism. The upward left vertical map is also an isomorphism for dimension reason. Finally the bottom horizontal map is an injection as it is a coordinate-wise map induced by $$H^0(\mathcal{O}_{E_x})\to H^1(\Omega_{E_x}^1)$$ sending $1\in H^0(\mathcal{O}_{E_x})$ to a non cohomologically trivial class $c_1(\Phi|_{E_x})\in H^{1,1}(E_x)$ (see \eqref{further}). Thus the middle horizontal map is an injection, and consequently so is the top horizontal map. The proof is now complete.
\end{proof}

\begin{proposition}\label{lcdnv}
Assume $n\geq 2$ and $L$ satisfies $(Q_n)$-property. Then $R^{q_X-3}\sigma_*\Omega_{\mathbb{P}}^{N-1}(\log E)\neq 0$.
\end{proposition}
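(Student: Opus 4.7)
By Lemma \ref{lcdlemma2}(1) applied with $i=3$ and $j=1$ (valid since $(Q_n)$-property forces $q_X \geq 4$ when $n \geq 2$), we have
$$R^{q_X-3}\sigma_*\Omega^{N-1}_{\mathbb{P}}(\log E) \cong R^{n-1}t_*\Omega^{2n}_{\mathbb{P}(\mathcal{E}_L)}(\log\Phi),$$
so the task reduces to showing that the right-hand side is non-zero. Applying $t_*$ to the short exact sequence \eqref{prev2} with $j=2n$ yields a long exact sequence containing
$$R^{n-1}t_*\Omega^{2n}_{\mathbb{P}(\mathcal{E}_L)}(\log\Phi) \xrightarrow{\beta} R^{n-1}q_*\Omega^{2n-1}_\Phi \xrightarrow{\delta} R^{n}t_*\Omega^{2n}_{\mathbb{P}(\mathcal{E}_L)},$$
and it suffices to verify that $\delta$ is not surjective. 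By Theorem \ref{mainfiltthm}, $R^{n-1}q_*\Omega^{2n-1}_\Phi$ is locally free with fiber at $x\in X$ of rank $1+nh^{1,0}(X)+h^{n-1,n-1}(X)\geq 2$, and this fiber contains a canonical element coming from the summand $H^{n-1,n-1}(E_x)\cong\mathbb{C}$, namely the class $[c_1(E_x)^{n-1}]$.

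To control the target $R^{n}t_*\Omega^{2n}_{\mathbb{P}(\mathcal{E}_L)}$, I invoke Grothendieck duality. Since $\Omega^{2n}_{\mathbb{P}(\mathcal{E}_L)}\cong\omega_{\mathbb{P}(\mathcal{E}_L)}\otimes T_{\mathbb{P}(\mathcal{E}_L)}$ one has ${\bf D}_{\mathbb{P}(\mathcal{E}_L)}(\Omega^{2n}_{\mathbb{P}(\mathcal{E}_L)})\cong \Omega^1_{\mathbb{P}(\mathcal{E}_L)}$, hence ${\bf D}_\Sigma({\bf R}t_*\Omega^{2n}_{\mathbb{P}(\mathcal{E}_L)})\cong {\bf R}t_*\Omega^1_{\mathbb{P}(\mathcal{E}_L)}$. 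Proposition \ref{rivanishing} (with $k=1$, which is available under $(Q_n)$) combined with \eqref{prev1} for $j=1$ gives $R^it_*\Omega^1_{\mathbb{P}(\mathcal{E}_L)}\cong R^iq_*\Omega^1_\Phi$ for $i\geq 1$, a sheaf described explicitly by Lemma \ref{ext}. This identification, together with the local-to-global $\mathcal{E}xt$-spectral sequence for ${\bf D}_\Sigma$, is enough to pin down $R^{n}t_*\Omega^{2n}_{\mathbb{P}(\mathcal{E}_L)}$ sharply enough to constrain the image of $\delta$.

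The crux, which I expect to be the main obstacle, is an explicit verification that $[c_1(E_x)^{n-1}]$ lies in the kernel of $\delta$. The plan is to descend to the fiber $F_x$ and perform a diagram chase in the restriction to $F_x$ of the commutative diagram \eqref{omega} with $j=2n$. Specifically, the residue sequence
$$0\to\omega_{F_x}\to\Omega^{2n}_{\mathbb{P}(\mathcal{E}_L)}(\log\Phi)|_{F_x}\to\Omega^{2n-1}_\Phi|_{F_x}\to 0,$$
obtained by restricting the third column of \eqref{omega}, combined with the filtration from Section \ref{secheart}, the splitting \eqref{another}, and the identification \eqref{further}, should show that the connecting map $H^{n-1}(\Omega^{2n-1}_\Phi|_{F_x})\to H^n(\omega_{F_x}) = \mathbb{C}$ vanishes on $[c_1(E_x)^{n-1}]$ (intuitively because this class is already a Chern-class type cohomology class on $E_x\subset F_x$ coming from $c_1(\Phi)|_{E_x}$, hence lifts to $\mathbb{P}(\mathcal{E}_L)$). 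Promoting this fiber-wise vanishing to a sheaf-theoretic statement via Grauert's theorem (Theorem \ref{mainfiltthm}(2)) and the base-change arguments developed in Section \ref{secheart} — in a manner dual to, but reversing the direction of, the cup-product argument in the proof of Proposition \ref{updated} — yields the required non-vanishing of $R^{n-1}t_*\Omega^{2n}_{\mathbb{P}(\mathcal{E}_L)}(\log\Phi)$, completing the proof.
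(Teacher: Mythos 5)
Your opening reduction is exactly the paper's: \lemmaref{lcdlemma2}(1) converts the statement to $R^{n-1}t_*\Omega^{2n}_{\mathbb{P}(\mathcal{E}_L)}(\log\Phi)\neq 0$, and analyzing a connecting map out of $R^{n-1}q_*\Omega^{2n-1}_{\Phi}$ is the right idea. But there are two genuine problems after that. First, a logical slip: if $R^{n-1}t_*\Omega^{2n}_{\mathbb{P}(\mathcal{E}_L)}(\log\Phi)=0$, exactness of your sequence forces $\delta$ to be \emph{injective}, so what you must rule out is injectivity of $\delta$, not surjectivity; your later search for a kernel element shows you really mean non-injectivity, but as stated the reduction is wrong. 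Second, and more seriously, the entire weight of your argument rests on the unproved claim that the class $[c_1(E_x)^{n-1}]$ lies in $\ker\delta$, to be established by a fiberwise diagram chase on $F_x$ plus duality. This is where the proposal stalls: the base-change machinery of Section \ref{secheart} (Grauert, \theoremref{mainfiltthm}(2)) applies to the smooth morphism $q:\Phi\to X$, not to $t:\mathbb{P}(\mathcal{E}_L)\to\Sigma$, so identifying fibers of $R^it_*$-sheaves with cohomology of restrictions to $F_x$ is not justified; and the heuristic that the class ``lifts to $\mathbb{P}(\mathcal{E}_L)$'' does not by itself produce a lift along the specific surjection $\Omega^{2n}_{\mathbb{P}(\mathcal{E}_L)}(\log\Phi)\to\Omega^{2n-1}_{\Phi}$ in the long exact sequence. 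No explicit cohomology class is needed.

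The paper's proof sidesteps all of this with a rank count you already have in hand. Using the middle row of \eqref{omega} and \propositionref{rivanishing}, one gets $R^{n-1}t_*\Omega^{2n}_{\mathbb{P}(\mathcal{E}_L)}(\log\Phi)\cong R^{n-1}q_*\bigl(\Omega^{2n}_{\mathbb{P}(\mathcal{E}_L)}(\log\Phi)|_{\Phi}\bigr)$; if this vanished, the right column of \eqref{omega}, namely $0\to\omega_{\Phi}\to\Omega^{2n}_{\mathbb{P}(\mathcal{E}_L)}(\log\Phi)|_{\Phi}\to\Omega^{2n-1}_{\Phi}\to 0$, would force an injection $R^{n-1}q_*\Omega^{2n-1}_{\Phi}\hookrightarrow R^nq_*\omega_{\Phi}$. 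By \theoremref{mainfiltthm} the source is locally free of rank $nh^{1,0}(X)+h^{n-1,n-1}(X)+1\geq 2$ (the inequality is where $n\geq 2$ enters) while the target is locally free of rank $1$, which is absurd. Note that even within your own setup the same shortcut is available: \eqref{prev1} for $j=2n$ together with \propositionref{rivanishing} identifies your target $R^nt_*\Omega^{2n}_{\mathbb{P}(\mathcal{E}_L)}$ with $R^nq_*\omega_{\Phi}$, of rank $1$, so injectivity of $\delta$ is impossible for rank reasons alone --- no duality, no explicit class, and no base change along $t$ required.
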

\begin{proof}
First recall from \lemmaref{lcdlemma2} (1) that $$R^{q_X-3}\sigma_*\Omega_{\mathbb{P}}^{N-1}(\log E)\cong R^{n-1}t_*\Omega_{\mathbb{P}(\mathcal{E}_L)}^{2n}(\log\Phi).$$
To this end, we work with the exact sequence (middle row of \eqref{omega})
$$0\to \Omega_{\mathbb{P}(\mathcal{E}_L)}^{2n}(\log \Phi)(-\Phi)\to \Omega_{\mathbb{P}(\mathcal{E}_L)}^{2n}(\log \Phi)\to \Omega_{\mathbb{P}(\mathcal{E}_L)}^{2n}(\log \Phi)|_{\Phi}\to 0$$ and use the fact that $R^it_* \Omega_{\mathbb{P}(\mathcal{E}_L)}^{2n}(\log \Phi)(-\Phi)=0$ for $i\geq 1$ (see \propositionref{rivanishing}) to conclude that $$R^{n-1}t_*\Omega_{\mathbb{P}(\mathcal{E}_L)}^{2n}(\log\Phi)\cong R^{n-1}t_*\Omega_{\mathbb{P}(\mathcal{E}_L)}^{2n}(\log\Phi)|_{\Phi}.$$ For the sake of contradiction, assume $R^{q_X-3}\sigma_*\Omega_{\mathbb{P}}^{N-1}(\log E)=0$ whence $ R^{n-1}t_*\Omega_{\mathbb{P}(\mathcal{E}_L)}^{2n}(\log\Phi)|_{\Phi}=0$. This, via the exact sequence (right column of \eqref{omega}) $$0\to\omega_{\Phi}\to \Omega_{\mathbb{P}(\mathcal{E}_L)}^{2n}(\log\Phi)|_{\Phi}\to \Omega_{\Phi}^{2n-1}\to 0$$ yields that the map  $$R^{n-1}q_*\Omega_{\Phi}^{2n-1}\to R^nq_*\omega_{\Phi}$$ is an injection. But according to \theoremref{mainfiltthm} (1), we have $$\textrm{rank}(R^{n-1}q_*\Omega_{\Phi}^{2n-1})=nh^{1,0}(X)+h^{n-1,n-1}(X)+1\geq 2\,\textrm{ and }\, \textrm{rank}(R^{n}q_*\omega_{\Phi})=1,$$ (the first equality is where we use $n\geq 2$) a contradiction.
\end{proof}

We are now ready to provide the 

\begin{proof}[Proof of \theoremref{mainlcd}] To start with, note that $F_i\mathcal{H}^j_{\Sigma}(\mathcal{O}_{\mathbb{P}^N})=0$ for all $i<0,j\geq 0$ by \cite[Remark 3.4]{MP}. It follows that 
\begin{equation}\label{again2}
    \text{gl}(F_{\bullet}\mathcal{H}^{\mathrm{lcd}(\mathbb{P}^N,\Sigma)}_{\Sigma}(\mathcal{O}_{\mathbb{P}^N}))\geq 0.
\end{equation}
Next, we need the following 
\begin{claim}\label{again3}
Assume $L$ satisfies $(Q_0)$-property. Then $q_X-\nu(X)-2\leq \mathrm{lcd}(\mathbb{P}^N,\Sigma)\leq q_X-2$.
\end{claim}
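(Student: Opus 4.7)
Proof plan: We establish the two inequalities via distinct routes.

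For the \emph{lower bound} $\mathrm{lcd}(\mathbb{P}^N,\Sigma) \geq q_X - \nu(X) - 2$, note that $q_X - \nu(X) - 2 = N - \mathrm{depth}(\mathcal{O}_\Sigma)$ by \theoremref{depth}. Since $\mathbb{P}^N$ is regular, the Auslander-Buchsbaum formula applied to $\mathcal{O}_\Sigma$ yields $\mathrm{pd}_{\mathcal{O}_{\mathbb{P}^N}}(\mathcal{O}_\Sigma) = q_X - \nu(X) - 2$, whence the top Ext sheaf $\mathcal{E}\textit{xt}^{q_X - \nu(X) - 2}_{\mathcal{O}_{\mathbb{P}^N}}(\mathcal{O}_\Sigma, \omega_{\mathbb{P}^N})$ does not vanish. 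This top Ext sheaf admits a natural map into $\mathcal{H}^{q_X - \nu(X) - 2}_{\Sigma}(\omega_{\mathbb{P}^N})$ coming from the first term of the directed system $\{\mathcal{E}\textit{xt}^{\bullet}(\mathcal{O}_{\mathbb{P}^N}/\mathcal{I}_\Sigma^n,-)\}_n$ whose colimit computes local cohomology. In the top degree this map is injective, since there is no higher Ext available to kill it. Twisting by $\omega_{\mathbb{P}^N}^{-1}$ converts this into the desired non-vanishing of $\mathcal{H}^{q_X - \nu(X) - 2}_\Sigma(\mathcal{O}_{\mathbb{P}^N})$, giving $\mathrm{lcd}(\mathbb{P}^N,\Sigma) \geq q_X - \nu(X) - 2$.

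For the \emph{upper bound} $\mathrm{lcd}(\mathbb{P}^N, \Sigma) \leq q_X - 2$, we invoke \propositionref{freelcd}, which provides $R^i \sigma_* \omega_\mathbb{P}(E) = 0$ for $i \geq q_X - \nu(X) - 2$, and combine it with the Du Bois property of $\Sigma$ from \theoremref{previous}. Specifically, Steenbrink's distinguished triangle \eqref{ste} at $s=0$ reads $R\sigma_*\mathcal{O}_\mathbb{P}(-E) \to \mathcal{O}_{\mathbb{P}^N} \to \underline{\Omega}^0_\Sigma$, and since $\underline{\Omega}^0_\Sigma \cong \mathcal{O}_\Sigma$ by Du Bois-ness, we obtain $R\sigma_*\mathcal{O}_\mathbb{P}(-E) \cong \mathcal{I}_\Sigma$. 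Applying Grothendieck duality for the proper birational map $\sigma$ then gives $R^i\sigma_*\omega_\mathbb{P}(E) \cong \mathcal{E}\textit{xt}^{i+1}_{\mathcal{O}_{\mathbb{P}^N}}(\mathcal{O}_\Sigma, \omega_{\mathbb{P}^N})$ for $i \geq 1$. Hence the Ext sheaves vanish for $i+1 \geq q_X - \nu(X) - 1$, and using the open-closed triangle $R\underline{\Gamma}_\Sigma\mathcal{O}_{\mathbb{P}^N} \to \mathcal{O}_{\mathbb{P}^N} \to Rj_*\mathcal{O}_{\mathbb{P}^N\setminus \Sigma} \cong R\sigma_*\mathcal{O}_\mathbb{P}(*E)$ together with the Du Bois-derived identifications, one translates these vanishings into the required vanishings $\mathcal{H}^j_\Sigma(\mathcal{O}_{\mathbb{P}^N}) = 0$ for $j \geq q_X - 1$.

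The main obstacle lies in the upper bound: bridging between vanishings of Ext sheaves (or equivalently $R^i\sigma_*\omega_\mathbb{P}(E)$) and vanishings of local cohomology is in general subtle because local cohomological dimension can strictly exceed the projective dimension of $\mathcal{O}_\Sigma$, as the paper itself observes in the discussion following \theoremref{mainlcd}. The argument must therefore leverage the specific Du Bois structure of $\Sigma$ and the tight control over the embedded log resolution of $(\mathbb{P}^N, \Sigma)$ developed in Section \ref{secgeom}, rather than a soft general inequality.
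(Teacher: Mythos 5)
Both halves of your argument have a genuine gap, and in each case the missing ingredient is precisely the one the paper's proof supplies. For the \emph{lower bound}, the crux is the injectivity of the natural map $\shExt^{p}_{\mathcal{O}_{\mathbb{P}^N}}(\mathcal{O}_\Sigma,\omega_{\mathbb{P}^N})\to\mathcal{H}^{p}_{\Sigma}(\omega_{\mathbb{P}^N})$ in the top degree $p=\mathrm{pd}(\mathcal{O}_\Sigma)=q_X-\nu(X)-2$, and your justification (``there is no higher Ext available to kill it'') does not address the actual obstruction: the kernel of the transition map $\shExt^{p}(\mathcal{O}_{\mathbb{P}^N}/\mathcal{I}_\Sigma^m,\omega_{\mathbb{P}^N})\to\shExt^{p}(\mathcal{O}_{\mathbb{P}^N}/\mathcal{I}_\Sigma^{m+1},\omega_{\mathbb{P}^N})$ is the image of the connecting map from $\shExt^{p-1}(\mathcal{I}_\Sigma^m/\mathcal{I}_\Sigma^{m+1},\omega_{\mathbb{P}^N})$, i.e.\ it comes from \emph{lower} Ext of the graded pieces of the filtration, which being in top degree for $\mathcal{O}_\Sigma$ does nothing to control. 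This injectivity is a theorem, not a formality, and it is where the Du Bois property of $\Sigma$ (\theoremref{previous}(1)(a)) enters: that is the content of \cite[Remark 11.7]{MP}, resting on the characterization of $F_0\mathcal{H}^q_\Sigma(\mathcal{O}_{\mathbb{P}^N})$ in terms of the Du Bois complex. Ironically, you invoke Du Bois-ness only in the upper bound, where it is not the decisive input.

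For the \emph{upper bound}, the step you defer --- translating the vanishing of $\shExt^{i+1}(\mathcal{O}_\Sigma,\omega_{\mathbb{P}^N})$, equivalently of $R^i\sigma_*\omega_{\mathbb{P}}(E)$, into vanishing of $\mathcal{H}^j_\Sigma(\mathcal{O}_{\mathbb{P}^N})$ --- is not merely ``subtle''; taken at face value it proves something false. Your Ext sheaves vanish in all degrees $>q_X-\nu(X)-2$, so the same ``translation'' would give $\mathrm{lcd}(\mathbb{P}^N,\Sigma)\le q_X-\nu(X)-2$, contradicting \theoremref{mainlcd}, which gives $\mathrm{lcd}(\mathbb{P}^N,\Sigma)=q_X-3$ whenever $\nu(X)\ge 2$. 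The point is that the depth/Ext data only controls the $i=0$ piece $R^{\bullet}\sigma_*\Omega^{N}_{\mathbb{P}}(\log E)=R^{\bullet}\sigma_*\omega_{\mathbb{P}}(E)$ of the criterion actually needed, namely \cite[Theorem E]{MP}: one must also verify $R^{j'+i}\sigma_*\Omega^{N-i}_{\mathbb{P}}(\log E)=0$ for all $i\ge 1$ and $j'\ge q_X-2$. After \lemmaref{lcdlemma2}(3) disposes of the range $j'+i\geq q_X$, the single remaining case is $R^{q_X-1}\sigma_*\Omega^{N-1}_{\mathbb{P}}(\log E)=0$, which is \propositionref{lcdcor}(4) and carries the real geometric content (the surjectivity of $f_1=h_1\circ g_1$, built from \theoremref{mainfiltthm}); this ingredient is entirely absent from your proposal, and no argument with $\mathcal{O}$-coefficients alone can substitute for it.
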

\begin{proof}
Recall that $\Sigma$ has Du Bois singularities under our assumption (see \theoremref{previous} (1)). Consequently $\textrm{lcd}({\mathbb{P}^N},\Sigma)\geq q_X-\nu(X)-2=N-\textrm{depth}(\mathcal{O}_{\Sigma})$ by \cite[Remark 11.7]{MP} and \theoremref{depth}.
For the upper bound of $\mathrm{lcd}(\mathbb{P}^N,\Sigma)$, according to \cite[Theorem E]{MP}, we need to show that 
\begin{equation}\label{needlcd}
    R^{j'+i}\sigma_*\Omega_{\mathbb{P}}^{N-i}(\log E)=0\,\textrm{ for all } j'\geq q_X-2\,\textrm{ and }\, i\geq 0.
\end{equation} 
To this end, recall that $R^k\sigma_*\Omega_{\mathbb{P}}^{N-i}(\log E)=0$ for all $k\geq q_X$ by \lemmaref{lcdlemma2} (3). 
Thanks to \propositionref{freelcd}, it only remains to verify that 
\begin{equation}
    R^{q_X-1}\sigma_*\Omega_{\mathbb{P}}^{N-1}(\log E)=0
\end{equation}
in order to prove \eqref{needlcd}. The required vanishing follows from \propositionref{lcdcor} (4).
\end{proof}

We now split the proof into two cases.

\smallskip

\noindent{\bf Case $1$: $\nu(X)=0$.}  It follows from \claimref{again3} that in this case 
$$\textrm{lcd}(\mathcal{O}_{\mathbb{P}^N},\Sigma)=q_X-2.$$
Now, according to \cite[Theorem 10.2]{MP}, the Hodge filtration on $\mathcal{H}^{q_X-2}_{\Sigma}(\mathcal{O}_{\mathbb{P}^N})$ is generated at level $1$ if and only if $$R^{q_X-3+j}\sigma_*\Omega_{\mathbb{P}}^{N-j}(\log E)=0\,\textrm{ for all }\, j\geq 2.$$ Since the required vanishings hold for $j\geq 3$ (see \lemmaref{lcdlemma2} (3)), the only non-trivial case is the one corresponding to case $j=2$. But this vanishing follows from \propositionref{lcdcor} (4). Also, the Hodge filtration is generated at level $0$ if and only if $$R^{q_X-2}\sigma_*\Omega_{\mathbb{P}}^{N-1}(\log E)=0,$$ and this is equivalent to $H^1(\mathcal{O}_X)=0$ by \propositionref{updated}. The assertion (1) now follows from \eqref{again2} and the fact that $h^1(\mathcal{O}_X)=\nu(X)=0\implies X\cong\mathbb{P}^1$.

\smallskip

\noindent{\bf Case $2$: $\nu(X)\geq 1$.} In this case, note that $n\geq 2$. We again apply \cite[Theorem E]{MP}. To see that $\textrm{lcd}(\mathbb{P}^N,\Sigma)\leq q_X-3$, we need to show that $$R^{c+j}\sigma_*\Omega_{\mathbb{P}}^{N-j}(\log E)=0\,\textrm{ for all }\, c\geq q_X-3, j\geq 0.$$ This follows immediately from \propositionref{freelcd}, \propositionref{lcdcor} (4), \propositionref{updated}, and \lemmaref{lcdlemma2} (3). Also, $\textrm{lcd}(\mathbb{P}^N,\Sigma)\geq q_X-3$ by \propositionref{lcdnv}. To calculate the generation level of the Hodge filtration, we again use \cite[Theorem 10.2]{MP}. Accordingly, to see that the Hodge filtration on $\mathcal{H}^{q_X-3}_{{\Sigma}}(\mathcal{O}_{\mathbb{P}^N})$ is generated at level $2$, we need to show that $$R^{q_X-4+j}\sigma_*\Omega_{\mathbb{P}}^{N-j}(\log E)=0\,\textrm{ for all }\, j\geq 3$$ which follows from \propositionref{lcdcor} (4) and \lemmaref{lcdlemma2} (3). Now, the Hodge filtration is generated at level $1$ if and only if
$$R^{q_X-2}\sigma_*\Omega_{\mathbb{P}}^{N-2}(\log E)=0$$ which holds if and only if $H^i(\mathcal{O}_X)=0$ for $i=1,2$ by \propositionref{updated}.
Finally, it is not generated at level $0$ by \propositionref{lcdnv}. Consequently (2) follows. 
\end{proof}

\begin{remark}
When $n=1$, the inequality $\textrm{lcd}(\mathbb{P}^N,\Sigma)\leq q_X-2=N-3$ can also be deduced from Dao-Takagi-Varbaro theorem (\cite[Theorem 11.21]{MP}) since $\textrm{depth}(\mathcal{O}_{\Sigma})=3$ by \theoremref{depth}.
\end{remark}

We now provide the proofs of the corollaries: 

\begin{proof}[Proof of \corollaryref{kodaira}]
(1)  is an immediate consequence of \theoremref{mainlcd} and \cite[Theorem 2.16]{PS}. (2) follows from (1) and \cite[Theorem A and Theorem B]{ORS}.
\end{proof}

\begin{proof}[Proof of \corollaryref{corb}]
By \propositionref{per}, $\mathbb{Q}_{\Sigma}[2n+1]$ is perverse if and only if $\textrm{lcd}(\mathbb{P}^N,\Sigma)=q_X-n-1$, whence the assertion follows from \theoremref{mainlcd}.
\end{proof}

\begin{proof}[Proof of \corollaryref{corf}] If $\Sigma$ is lci, then $\mathbb{Q}_{\Sigma}[2n+1]$ is perverse, whence $n\leq 2$ by \corollaryref{corb}. Thus either (i) or (ii) holds by \theoremref{maingor}. For the partial converse, the only non-trivial case is that of an elliptic normal curve of degree six, $\Sigma$ in this case is a complete intersection by \cite[Remark 5.6]{ENP}.
\end{proof}

We end this article by providing equivalent characterizations of the secant varieties of rational normal curves, that we obtain by combining our work with works of other authors (see \cite{CK} for the relevant notation): 

\begin{corollary}\label{ratnorm}
Let $X$ be a smooth projective curve of genus $g$ and let $L$ be a line bundle on $X$ with $\mathrm{deg}(L)\geq 2g+3$. Assume $\Sigma\neq\mathbb{P}^N$. Then the following are equivalent:
\begin{enumerate}
    \item $(X,L)\cong(\mathbb{P}^1,\mathcal{O}_{\mathbb{P}^1}(d))$ with $d\geq 4$,
    \item $\mathrm{deg}(\Sigma)=\binom{N-1}{2}$ i.e. $\Sigma$ is a 2-secant variety of minimal degree,
    \item $\Sigma$ has 2-pure Cohen-Macaulay Betti table,
    \item $\beta_{p,2}=\binom{p+1}{2}\binom{N-1}{p+2}$ for all $p$,
    \item $\dim(I_{\Sigma})_3=\binom{N-1}{3}$,
    \item $\beta_{p,2}=\binom{p+1}{2}\binom{N-1}{p+2}$ for some $1\leq p\leq N-3$,
    \item the third strand of $\Sigma$ has length $N-3$,
    \item $\mathrm{reg}(\Sigma)=3$,
    \item $\Sigma$ satisfies $(N_{3,N-3})$-property,
    \item $\Sigma$ is a Fano variety with log terminal singularities,
    \item $\Sigma$ has rational singularities,
    \item $\Sigma$ has quotient singularities,
    \item the singularities of $\Sigma$ are pre-$1$-rational,
    \item the singularities of $\Sigma$ are pre-$p$-rational for all $p$,
    \item $\mathrm{gl}(F_{\bullet}\mathcal{H}^{\mathrm{lcd}(\mathbb{P}^N,\Sigma)}_{{\Sigma}}(\mathcal{O}_{\mathbb{P}^N}))=0$.
\end{enumerate}
\end{corollary}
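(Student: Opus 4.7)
The strategy is to route every one of the fifteen conditions through (1), assigning each cluster to the external result that handles it. Throughout, the hypothesis $\deg(L) \geq 2g+3$ on the smooth curve $X$ guarantees that $L$ satisfies $(Q_1)$-property (which is the same as $(Q_n)$-property here since $n=1$), so \theoremref{previous} and \corollaryref{cord} are available.

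First, the equivalences among (2)--(9) and with (1) are an immediate application of \cite[Theorem 1.1]{CK} with $q=2$, which treats precisely the syzygetic, Betti-theoretic, and regularity statements for the $2$-secant variety and characterizes them as $X$ being rational; the bound $d \geq 4$ in (1) is forced because $(\mathbb{P}^1,\mathcal{O}(3))$ is the twisted cubic, whose secant variety fills $\mathbb{P}^3$, and this case is excluded by the standing hypothesis $\Sigma \neq \mathbb{P}^N$. The equivalence (1) $\Leftrightarrow$ (10) is the content of \cite[Theorem 1.1]{ENP}. For (1) $\Leftrightarrow$ (11), I would invoke \theoremref{previous}(1)(d): $\Sigma$ has rational singularities if and only if $H^i(\mathcal{O}_X)=0$ for all $i>0$, which for a smooth projective curve reduces to $g=0$, i.e., $X \cong \mathbb{P}^1$ (with the usual $d\geq 4$ again imposed by $\Sigma\neq \mathbb{P}^N$). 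Finally, the block (1) $\Leftrightarrow$ (12) $\Leftrightarrow$ (13) $\Leftrightarrow$ (14) $\Leftrightarrow$ (15) is exactly \corollaryref{cord}, whose listed conditions (5), (1), (2), (3), (4) are being relabeled here as (1), (12), (13), (14), (15).

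There is no substantive obstacle: the argument is purely a citation and bookkeeping exercise, once one observes that the positivity hypothesis $\deg(L)\geq 2g+3$ supplies $(Q_1)$-property, making every cited theorem in this paper applicable. The only point requiring a moment of care is reconciling the numerical threshold $d\geq 4$ in (1) with the weaker bound $d\geq 3$ appearing in \theoremref{mainquot}; this is uniformly resolved by the hypothesis $\Sigma\neq \mathbb{P}^N$, which excludes the twisted cubic case $d=3$, exactly as recorded in \corollaryref{cord}.
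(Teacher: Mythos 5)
Your proposal is correct and follows essentially the same route as the paper: \cite[Theorem 1.1]{CK} for the block (1)--(9), \cite[Theorem 1.1]{ENP} for (10), \theoremref{previous}(1)(d) for (11), and \corollaryref{cord} for (12)--(15), with the $d\geq 4$ versus $d\geq 3$ discrepancy resolved by $\Sigma\neq\mathbb{P}^N$ exactly as you say. The only (cosmetic) difference is that the paper is careful to cite the bridge from condition (1) to condition (2) --- namely that a $2$-secant variety of an irreducible curve has minimal degree precisely when the curve is a rational normal curve --- as a separate input (\cite[Theorem 6.1]{CR} or the introduction of \cite{CK}) rather than folding it into \cite[Theorem 1.1]{CK} as you do.
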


\begin{proof} The equivalence of (1) with (12)-(15) comes from \corollaryref{cord}. (1) and (11) are equivalent by \cite[Proposition 9]{V2} (or by \theoremref{previous}(1)(d)). The equivalence of (1) and (10) follows from \cite[Theorem 1.1]{ENP}. Now, since $X$ is an irreducible curve by assumption, $\Sigma$ is a 2-secant variety of minimal degree if and only if (1) holds (see the introduction of \cite{CK}, or the results of \cite{CR}, in particular \cite[Theorem 6.1]{CR}). Thus, the equivalence of (1)-(9) follows from \cite[Theorem 1.1]{CK}.  
\end{proof}

\bibliography{bibsecant}

\end{document}